%

\documentclass[12pt, reqno]{amsart}
\usepackage{amsmath, amsthm, amscd, amsfonts, amssymb, graphicx, color, mathrsfs,enumitem}
\usepackage[bookmarksnumbered, colorlinks, plainpages]{hyperref}
\usepackage[all]{xy}
\usepackage{slashed}
\usepackage{mathabx}
\usepackage{tipa}
\usepackage{soul}
\usepackage{cancel}
\usepackage{ulem}
\usepackage{mathtools}
\newcommand{\eN}{\mathbb{N}}
\def\gy{{(1+G_{Y}(X-Y))}}

\textheight 22.5truecm \textwidth 14.5truecm
\setlength{\oddsidemargin}{0.35in}\setlength{\evensidemargin}{0.35in}

\setlength{\topmargin}{-.5cm}

\newtheorem{theorem}{Theorem}[section]
\newtheorem{lemma}[theorem]{Lemma}

\newtheorem{proposition}[theorem]{Proposition}
\newtheorem{corollary}[theorem]{Corollary}
\theoremstyle{definition}
\newtheorem{definition}[theorem]{Definition}

\theoremstyle{remark}
\newtheorem{remark}[theorem]{Remark}  
\numberwithin{equation}{section}

\newcommand\norm[1]{\left\lVert#1\right\rVert}
\newcommand{\jpx}{\langle x \rangle}
\newcommand{\subscript}[2]{$#1 _ #2$}
\newcommand{\jpxi}{\langle \xi \rangle}
\newcommand{\jp}{\langle \xi \rangle}
\newcommand{\jpxe}{\langle X \rangle}
\newcommand{\jpxet}{\langle (x, \xi) \rangle}
\newcommand{\jpX}{\langle X \rangle}
\newcommand{\jpe}{\langle \eta \rangle}
\newcommand{\jpye}{\langle Y \rangle}
\newcommand{\half}{\frac{1}{2}}
\newcommand{\Ra}{\mathbb{R}}
\newcommand{\ene}{\mathbb{N}}
\newcommand{\epsi}{\varepsilon}
\newcommand{\Ran}{\mathbb{R}^n}
\newcommand{\frn}{\frac{n}{2}}
\DeclareMathOperator{\os}{o}

\begin{document}
\setcounter{page}{1}

\title[Degenerate Schr\"odinger equations]{Degenerate Schr\"odinger equations with irregular potentials}

\author[D. Cardona]{Duv\'an Cardona}
\address{
  Duv\'an Cardona:
  \endgraf
  Department of Mathematics: Analysis, Logic and Discrete Mathematics
  \endgraf
  Ghent University, Belgium
  \endgraf
  {\it E-mail address} {\rm duvan.cardonasanchez@ugent.be}
  }
  
  \author[M. Chatzakou]{Marianna Chatzakou}
\address{
  Marianna Chatzakou:
  \endgraf
  Department of Mathematics: Analysis, Logic and Discrete Mathematics
  \endgraf
  Ghent University, Belgium
  \endgraf
  {\it E-mail address} {\rm Marianna.Chatzakou@UGent.be}
  }
  
  \author[J. Delgado]{Julio Delgado}
\address{
  Julio Delgado:
  \endgraf
  Departmento de Matematicas
  \endgraf
  Universidad del Valle
  \endgraf
  Cali-Colombia
  \endgraf
    {\it E-mail address} {\rm delgado.julio@correounivalle.edu.co}}

\author[M. Ruzhansky]{Michael Ruzhansky}
\address{
  Michael Ruzhansky:
  \endgraf
  Department of Mathematics: Analysis, Logic and Discrete Mathematics
  \endgraf
  Ghent University, Belgium
  \endgraf
 and
  \endgraf
  School of Mathematical Sciences
  \endgraf
  Queen Mary University of London
  \endgraf
  United Kingdom
  \endgraf
  {\it E-mail address} {\rm michael.ruzhansky@ugent.be}
  }

\subjclass[2020]{35L80, 47G30, 35L40,  35A27}

\keywords{Degenerate harmonic oscillators, degenerate elliptic operators, Schr\"odinger equation, $L^p$-bounds,  Schatten-von Neumann ideals}

\thanks{The authors are supported by the FWO Odysseus 1 grant G.0H94.18N:Analysis and Partial Differential Equations and by the Methusalem programme of the Ghent University Special Research Fund(BOF)(Grant number 01M01021). J. Delgado is also supported by Vice. Inv. Universidad del Valle Grant CI 71329, MathAmSud and Minciencias-Colombia under the project MATHAMSUD21-MATH-03. Marianna Chatzakou is also supported by the FWO Fellowship grant No 12B1223N. Michael Ruzhansky is also supported by EPSRC grant EP/R003025/2.}

\begin{abstract} In this work we investigate  a class  of degenerate  Schr\"odinger equations  associated to degenerate elliptic operators with irregular potentials on $\Ran$   by introducing a suitable H\"ormander metric $g$ and a  $g$-weight $m$. We establish the  well-posedness for the corresponding  degenerate Schr\"odinger and degenerate parabolic equations. When the subelliticity is available on the degenerate elliptic operator we  deduce spectral properties for a class of degenerate Hamiltonians. We also study the $L^p$ mapping properties for operators with symbols in the $S(m^{-\beta},g)$ classes in the spirit of classical Fefferman's $L^p$-bounds for the $(\rho, \delta)$ calculus. Finally, within our $S(m,g)$-classes,  sharp $L^p$-estimates and Schatten properties for Schr\"odinger operators for H\"ormander sums of squares are also investigated.
\end{abstract} \maketitle

\tableofcontents
\allowdisplaybreaks
\section{Introduction}

\subsection{Outline}
In this work we study a class of degenerate Schr\"odinger equations corresponding to a  Hamiltonian:  $\mathcal{H}_V=a_2(x,D)+V(x)$, where $a_2(x,D)$ is a second order degenerate elliptic operator on $\Ran$ and  the potential $V$ is a real-valued measurable function of quadratic order at  $\infty$.  Our analysis covers the case where the principal symbol $a_2(x,\xi)$ of the operator $a_{2}(x,D)$ which we assume to be positive belongs to the Kohn-Nirenberg class of second order, that is, it satisfies estimates of the type
\begin{equation}\label{KN:second:order}
    |\partial_{x}^{\beta}\partial_\xi^\alpha a_{2}(x,\xi)|\leq C_{\alpha,\beta}(1+|\xi|)^{2-|\alpha|},\,\,(x,\xi)\in T^*\mathbb{R}^n.
\end{equation}Note that by the spectral theory of second-order self-adjoint operators, the positivity condition $a_{2}(x,\xi)\geq 0$ also covers the case of  differential operators given by
\begin{equation}\label{oprelss} a_2(x,D)=-\sum a_{ij}(x)\frac{\partial^2}{\partial x_{i} \partial x_{j}}f+ \mbox{ lower order terms}, \quad\quad f\in C_{0}^\infty (\Ran),\end{equation}
with the coefficients $a_{ij}\in C_b^\infty (\Ran)$, belonging to  the space of real-valued smooth functions which are uniformly bounded on $\Ran$, together with all their derivatives, and so that  $A(x)=(a_{ij}(x))$ is a positive semi-definite matrix for every $x\in\Ran .$ Then, our setting also includes H\"ormander sub-Laplacians 
$$ a_2(x,D)=-\sum\limits_{j=1}^k X_j^2$$ that are sums of squares of real vector fields.  For a differential operator $a_{2}(x,D)$ whose symbol is as in \eqref{KN:second:order}, we introduce a metric $g$ on the phase-space defined by

 \begin{equation}\label{metiya}
    g_{X}(dx,d\xi):=\frac{(\langle \xi \rangle^{2}+|x|^2)}{m(x,\xi)}\left(dx^{2}+\frac{d\xi^{2}}{\langle \xi \rangle^{2}+|x|^2}\right)\,,
\end{equation}
where \begin{equation}\label{weight}m(x,\xi)=a(x,\xi)+ \jpxet, \end{equation}
with
\[a(x,\xi)=a_2(x,\xi)+|x|^2,\,\,X=(x,\xi),\,\, \jpxet=(1+|x|^2+|\xi|^2)^{\half}.\]%
The metric and the weight above are sharp with respect to the H\"ormander conditions: temperateness, slow continuity, and the uncertainty principle, that defines the $S(m,g)$ class that contains the Schr\"odinger operators $a_{2}(x,D)+|x|^2.$

After establishing  that we are in a suitable $S(m,g)$ pseudodifferential calculus, i.e, the metric $g$ is indeed a H\"ormander metric and $m$ is a $g$-weight. We consider  $V$ to be a real-valued Borel function defined on $\Ran$  such that for appropriate constants $C, C_1,  C_2>0$,  $|V(x)|\leq C|x|^2,$ for a.e.$|x|\geq C_1$ and $V(x)\geq -C_2$, for a.e.  $x\in\Ran$. Then, by using the machinery of the Weyl-H\"ormander calculus, we:
\begin{itemize}
    \item obtain the well-posedness of the following Cauchy problem on the Sobolev space $H(M,g)$:
\begin{equation}\label{EQ:cp2xertaax:2:intro:2}
\left\{\begin{array}{rl}
i\partial_t u& = \mathcal{H}_Vu, \\
u(0)&= f.
\end{array}
\right.
\end{equation}
\item We analyse the well posedness for degenerate parabolic equations of the form 
\begin{equation}\label{EQ:cpxaax1ff1he:2:intro}
\left\{\begin{array}{rl}
\partial_t u & = \mathcal{H}u, \\
u(0)&= f,
\end{array}
\right.
\end{equation} on  $L^2(\mathbb{R}^n).$ The Hamiltonian $\mathcal{H}$ is considered to be irregular.

\item We establish the $L^p$-boundedness theory for the $S(m^{-\frac{n\varepsilon}{2}},g)$-classes with the metric $g$ and the weight $m$ as in \eqref{metiya} and in \eqref{weight}, respectively. Moreover, we give the sharp index $\varepsilon$ in the case where $a_{2}(x,D)$ is the principal symbol of a H\"ormander sub-Laplacian. Consequently, we obtain the $L^p$-boundedness of negative powers for our degenerate  Schr\"odinger type operators in that case.

\item We investigate the membership of  the operators in the classes $S(m^{-\mu},g)$ to the Schatten-von Neumann classes $S_{r}(L^2(\mathbb{R}^n)).$ Consequently, the information about the distribution of the eigenvalues for a class of  Schr\"odinger type operators is obtained, as well as the rate of growth of their eigenvalues.
\end{itemize}

\subsection{State of the art and contributions}

The study of the interplay of a Hamiltonian between Schr\"odinger semigroups and heat semigroups in order to analyse the behaviour of the eigenfunctions of the Hamiltonian on $L^p$ spaces has been of an active interest in the last decades since the seminal works of B. Simon and Carmona  \cite{BSimon1}, \cite{BSimon2},  \cite{BSimon3}, \cite{Carmona1}. See also \cite{albev:hs}, \cite{iou:a1s}, \cite{ishik:a1} for further  developments on this matter. Thus, the presentation here for the $L^2$ setting is somewhat in the same spirit. We will also obtain spectral properties for Hamiltonians of the form $\mathcal{H}_V+J_{\jpxet}$, where $J_{\jpxet}$ is the 
 pseudodifferential operator with symbol $\jpxet$, by means of the analysis of Schatten-von Neumann classes and where we will require some growth condition on the potential $V$ at $\infty$ to guarantee the existence of  a discrete spectrum, as it is well-known from the classical elliptic theory of Schr\"odinger operators. This will be the prototypical case if e.g. $V=|x|^2.$  We obtain the spectral properties for $\mathcal{H}_V+J_{\jpxet}$ by studying  negative powers of such operators in suitable  Schatten-von Neumann classes. 
 
In order to avoid some eventual misunderstanding regarding the terminology
of degeneracies in this work and related works on the Schr\"odinger equation we
recall some points. In the context of the study of the Schr\"odinger equation, it is
customary to use the term degenerate in a different sense. Indeed, in quantum
mechanics, an energy level $E$ is called degenerate if there are two or more states
corresponding to the energy level $E$ of the quantum system. The number of different states corresponding to $E$ is known as the degree of degeneracy of the level in that setting. The investigation of the hypoellipticity and $L^2$ estimates for degenerate Schr\"odinger operators has been considered by Y. Morimoto in \cite{mo:m1} as an extension of the Theorem 4 in Chap. II of the seminal work of C. Fefferman \cite{fe:unc}. 

An important feature of our results up to this point is that we do not impose any subellipticity condition on the operator $a_2(x,D)$, allowing to have for instance sum of squares without the H\"ormander condition, and in particular infinitely degenerate elliptic operators. It is also clear that our metric neither satisfies the so-called strong uncertainty principle (\cite{Schrohe1x}, \cite{NicolaRodino}, \cite{NicolaRodino2}), nor  the Beals condition (\cite{Be}, \cite{Be2}) to guarantee the  $L^p$ boundedness of operators with simbols in $S(1,g)$, making it somehow  patological from other perspectives.

Moreover, we also establish the well-posedness for degenerate parabolic equations when $$ a_2(x,D)=-\sum\limits_{j=1}^k X_j^2$$  is a sum of squares of real vector fields. We also extend the above results on the well-posedness for positive powers of the Hamiltonian. Further, by using the so-called {\it geodesic temperateness} of the metric $g$ (\cite{le:book}, \cite{Bonygtemp}) and assuming that  $a_2(x,D)$ is subelliptic, we prove that
\begin{equation}\label{hami1}
    \mathcal{H}_V+C:H(m,g)\rightarrow L^2(\Ran),
\end{equation}
is an isomorphism for $C>0$ large enough.

In particular, one can consider the important case of  $a_2(x,\xi)$ being the symbol of a  sum of squares \[L=-\sum\limits_{j=1}^kX_j^2,\] where $X_j$ are real vector fields on $\Ran$.  In the case of $n=2$, we will give an improved version of the $L^p$ bounds due to the specific knowledge of the H\"ormander condition when  $a_2(x,\xi)=\xi_{1}^{2}+\tilde{x_1}^{2}\xi_{2}^{2}$, where $\tilde{x_1}$ is given by \eqref{x}. The (rescaled) operator under investigation that is now expressed as follows
\begin{equation}\label{daho}
    a(x,D):=-(\partial_{x_{1}}^{2}+\tilde{x}_{1}^{2}\partial_{x_{2}}^{2})+|x|^{2}\,,
\end{equation}
boils  down to a particular case of the so-called \textit{degenerate Schr\"odinger operator}, also known as a \textit{degenerate harmonic oscillator}. To justify the use of term ``degenerate harmonic oscillator'' let us consider the case where $\tilde{x}_{1}\equiv 1$; in this case the second order differential operator on $\mathbb{R}^2$ given by $$a(x,D)=-(\partial_{x_{1}}^{2}+\partial_{x_{2}}^{2})+|x|^{2}$$ corresponds to the quantum harmonic oscillator on $\mathbb{R}^2$; see e.g. the monograph \cite{P10}. In the degenerate situation described by \eqref{daho} the operators are parametrised by a suitable family of smooth functions, $\tilde{x}_{i}\in C^{\infty}(\mathbb{R})$ that satisfy 
\begin{align}\label{x}
    \tilde{x}_{1}:= \left\{
        \begin{array}{ll}
            \textnormal{sgn}(x_1)\,x_{1}, & \quad |x_1| \leq 2 \\
            c'\,\textnormal{sgn}(x_1),& \quad |x_1| \geq 4.
        \end{array}
    \right.
\end{align}where $c'\in \mathbb{R},$ with $c'\neq 0
$.

Our next aim is the mathematical investigation of the qualitative $L^p$ mapping properties for the negative powers of our degenerate Schr\"odinger operators within the  $S(m^{-t},g)$ classes. For other works on different directions on  the  analysis   of degenerate  Schr\"odinger equations; see for example, Doi \cite{Doi94,Doi96}, Hara \cite{Hara92}, Ichinose \cite{Ichinose84,Ichinose87}, Hiroshi and Kajitani \cite{HiroshiKajitani}, Mizohata \cite{Mizohata85} and Takeuchi \cite{Takeuchi} and the references therein.\\

The $L^\infty$-$BMO,$ the $H^1$-$L^1$ and the $L^p$-boundedness for the pseudo-differential operators in the classes  $S(m^{-t},g)$ will be studied in Section \ref{boundednnesssection}, generalising to our context the estimates in Fefferman \cite{fe}. It is important to point out that in the  study of $L^p$ bounds for  pseudo-differential operators in $S(m,g)$ classes one has to take into account Beal's condition. Indeed, R. Beals has characterised in \cite{Be} and \cite{Be2} the H{\"o}rmander's metrics $g$ ensuring 
the $L^p$ boundedness for operators in $OpS(1,g)$ and $1<p<\infty$. The condition discovered by Beals for a split metric $g_{x,\xi}(z,\theta)=g_{x,\xi}(z,0)+g_{x,\xi}(0,\theta) $ is  
\[ g_{x,\xi}(0,\theta)\leq C .\]%
 In the case of $S_{\rho ,\delta}^{0} $ classes, the Beals condition is nothing else but the well known restriction $\rho=1$. For our metric $g$ this condition is equivalent to the inequality %
\[g_{\Tilde{x},\Tilde{\xi}}(0,\xi)=\frac{|\xi|^2}{a(x,\xi)+\jpxe}\leq C,\]
which does not hold unless $a$ is elliptic. Since we are  interested in the degenerate case, we do not dispose of the $L^p$ boundedness for operators in $OpS(1,g)$, however one can obtain $L^p$ boundedness for certain intervals centered at $p=2$ and suitable negative powers of the weight $m(x,\xi)$ defined by \eqref{weight}. The latter is a well known situation in the case of $S_{\rho ,\delta}^{-t} $ classes when $\rho <1$ and has been established by Fefferman (\cite{fe}). Other subsequent generalizations for $S(m,g)$-classes of the Fefferman boundedness theorem \cite{fe} have been established by one of the authors of this work in \cite{Delgado2006,Profe2}.  Extensions of the Fefferman results to other non-commutative structures can be found in \cite{CRD19} and \cite{DR19}. $L^p$-bounds in the context of the harmonic oscillator were investigated in \cite{CR}.

Regarding the boundedness properties for the class $\textnormal{Op}(S(m^{-t},g)),$ we found the following facts. For a non-negative symbol $a_2=a_2(x,\xi)\in S^2(\Ran\times\Ran)$  of a differential operator, we have:
\begin{itemize}
    \item[$\bullet$] If $\frac{3}{4}\leq \beta<1,$ then $\textnormal{Op}(S(m^{-\frn\beta},g))\subset \mathscr{B}(L^\infty(\mathbb{R}^n), BMO(\mathbb{R}^n)).$
    \vspace{0.4cm}
    \item[$\bullet$] If   $0\leq \beta<\frac{3}{4}$ and  $\sigma(x,D)\in \textnormal{Op}(S(m^{-\frn\beta},g))$, then $\sigma(x,D):L^p(\mathbb{R}^n)\rightarrow L^p(\mathbb{R}^n)$ extends to a bounded operator, provided that
\begin{equation}
    \left| \frac{1}{p}-\frac{1}{2}\right|\leq \frn\beta.
\end{equation}
\end{itemize}%

This paper is organized as follows. In Section \ref{preliminaries} we will present the necessary background on the theory of pseudo-differential operators that we will use in this paper, in particular some basics about the Weyl-H\"ormander classes.  In Section \ref{Metric}, we prove, among other things, that the metric $g$ defined in \eqref{metiya} is a H\"ormander metric and the weight $m$ is indeed, a $g$-weight. In Section \ref{applicatione} we present some applications to the well-posedness for degenerate  Schr\"odinger operators and degenerate parabolic equations as was summarised above in the $L^2$-setting.  Sections \ref{boundednnesssection} and \ref{improvedsec} are devoted to the study of the $L^p$ boundedness for the classes $S(m^{-\beta},g)$ as we have  described above. We finish the paper with Section \ref{Schattensec} where we deduce some consequences on spectral properties for our degenerate Schr\"odinger operators.

\section{Preliminaries: Weyl-H\"ormander calculus}\label{preliminaries}
In this section we summarise some preliminary topics about the Weyl quantisation and the Weyl-H\"ormander calculus. For this we will follow  H\"ormander \cite[Section 18.5]{HormanderBook34} and Chapter 2 of \cite{le:book}.\\

The Weyl quantisation of $a\in \mathscr{S}'(\mathbb{R}^n\times \mathbb{R}^n)$ is the operator $a(x,D):\mathscr{S}(\mathbb{R}^n)\rightarrow \mathscr{S}'(\mathbb{R}^n)$ defined in the weak sense by the symmetric expression
\begin{eqnarray*}
    a^{w}(x,D)u(x):=\int\limits_{\mathbb{R}^n}\int\limits_{\mathbb{R}^n}e^{2\pi i\langle x-y,\xi \rangle}a((x+y)/2,\xi)u(y)dyd\xi,\,\,u\in \mathscr{S}(\mathbb{R}^n).
\end{eqnarray*}
The Weyl quantisation of $a(\cdot,\cdot)$ is the special case of the $\tau$-quantisation of $a(\cdot,\cdot)$ defined for $0\leq \tau\leq 1,$ by
\begin{equation*}
    a^{\tau}(x,D)f(x):=\int\limits_{\mathbb{R}^n}\int\limits_{\mathbb{R}^n}e^{2\pi i\langle x-y,\xi \rangle}a(\tau x+(1-\tau)y,\xi)u(y)dyd\xi,
\end{equation*}also defined in the weak sense. So, the Weyl quantisation of $a(\cdot,\cdot)$ is  $(1/2)$-quantisation of $a(\cdot,\cdot),$ i.e. $a^{\frac{1}{2}}(x,D)=a^{w}(x,D).$ The $1$-quantisation (Kohn-Nirenberg quantisation) of $a(\cdot,\cdot),$ recovers the usual definition of the pseudo-differential operator associated to the `symbol'  $a(\cdot,\cdot):$
\begin{equation*}
    a(x,D)u(x)\equiv a^1(x,D)u(x)=\int\limits_{\mathbb{R}^n}e^{2\pi i\langle x,\xi \rangle}a(x,\xi)\widehat{u}(\xi)d\xi,
\end{equation*}where the equivalence of both representations is a consequence of the fact that the Fourier transform is an isomorphism on the Schwartz space implying that $\widehat{u}\in \mathscr{S}(\mathbb{R}^n)$ where by abuse of notation we have written $a(x,\xi)\widehat{u}:=a(x,\xi)\widehat{u}(\xi)$ which makes sense if e.g. the distribution $a(\cdot,\cdot)$ agrees with a locally integrable function. For $a,b\in \mathscr{S}(\mathbb{R}^n),$ and $0\leq \tau,\tau'\leq 1,$ we have the equality $a^{\tau}(x,D)=b^{\tau'}(x,D)$ if and only if
\begin{equation*}
    a(x,\xi)=\int\limits_{\mathbb{R}^n}e^{-2\pi i\langle \xi-\eta,z \rangle}b(x+(\tau'-\tau)z,\eta)d\eta,
\end{equation*}which means that we always can pass from one quantisation to other one by using the semigroup $(J_t)_{t\in \mathbb{R}},$ defined by
\begin{equation*}
J_t a(x,\xi):= \int\limits_{\mathbb{R}^n}e^{-2\pi i\langle \xi-\eta,z \rangle}b(x+tz,\eta)dyd\eta,   
\end{equation*}which satisfies $J_{t+s}=J_{t}J_{s},$ $t,s\in \mathbb{R}.$ In particular, for the Weyl quantisation and the Kohn-Nirenberg quantisation of $a$, respectively, we have
\begin{equation*}
    a(x,D)=(J_{1/2}a)^{w}(x,D),\quad \, a^{w}(x,D)=(J_{-1/2}a)(x,D).
\end{equation*} 
\begin{remark}[The adjoint of the Weyl quantisation]
The following remarkable property: $a^{w}(x,D)^{*}=\overline{a}^{w}(x,D),$ implies that $a^{w}(x,D)^{*}={a}^{w}(x,D)$ for real valued symbols and  it is one of the reasons why Hermann Weyl introduced this quantisation for the purposes of quantum mechanics (see H\"ormander \cite[page 151]{HormanderBook34}). The other important property is its symplectic invariance that we will not discuss here. 
\end{remark} 
\begin{remark}[Composition in the Weyl quantisation]
Let us write $X=(x,\xi),$ (or $Y=(y,\eta)$ etc.) for points on the phase space $\mathbb{R}^{2n}=\mathbb{R}^{n}_{x}\times \mathbb{R}^{n}_{\xi},$ (resp. $\mathbb{R}^{n}_{y}\times \mathbb{R}^{n}_{\eta}).$
The composition $a^{w}\circ b^{w}(x,D)\equiv a^{w}(x,D)\circ b^{w}(x,D)$ in the Weyl quantisation is related with the symplectic structure of $\mathbb{R}^n\times\mathbb{R}^n=T^* \mathbb{R}^n$. Indeed, for
 $a,b\in S(\mathbb{R}^n)$, let us define $$(a\#b)(X):=\frac{1}{\pi^{{2n}}}\int\limits_{\mathbb{R}^{2n} }\int\limits_{\mathbb{R}^{2n} }e^{-2i\sigma(X-Y_{1},X-Y_{2})}a(Y_{1})b(Y_{2})dY_{1}dY_{2},$$
where $\sigma(X,Y):=\langle y,\xi\rangle-\langle x,\eta\rangle$  is the symplectic form on $\mathbb{R}^{2n}.$ Then, in terms of the operation $\#,$  we  have $a^w(x,D)\circ b^{\omega}(x,D)=(a\#b)^{\omega}(x,D)$.
\end{remark}
In order to introduce the Weyl-H\"ormander calculus we need the notion of a H\"ormander metric. 
One reason for this, is that if we want to study the properties of some operator $L,$ we can to associate to $L$ this kind of metric.
\begin{definition}[H\"ormander's metric]\label{HM}
For $X \in \mathbb{R}^{2n}$, let $g_{X}(\cdot)$ be a positive definite quadratic form on $\mathbb{R}^{2n}$. We say that 
$g(\cdot)$ is a H\"ormander metric if the following three conditions are satisfied:
\begin{enumerate}
\item[i.] {\bf Continuity or slowness.} There exist a constant $C>0$ such that 
$$ g_{X}(X-Y)\leq C^{-1}\Longrightarrow \left(\frac{g_X(T)}{g_Y(T)}\right)^{\pm 1}\leq C,$$
for all $X,Y \in\mathbb{R}^{2n}$, $T\in\mathbb{R}^{2n}\setminus\{0\}$.
\item[ii.] {\bf Uncertainty principle.} In terms of the symplectic form $\sigma(Y,Z):=\langle z,  \eta\rangle -\langle y, \zeta\rangle$,
we define \begin{equation}\label{gsigma}
    g_{X}^{\sigma}(T):=\sup_{W\neq 0}
\frac{\sigma(T,W)^{2}}{g_{X}(W)}.
\end{equation}   We say that $g$ satisfies the {\it uncertainty principle }
if $$ \lambda_{g}(X)=\inf_{T\neq 0}
\left(\frac{g_{X}^{\sigma}(T)}{g_{X}(T)}\right)^{1/2}\geq 1 ,$$ for
all $X, T\in\mathbb{R}^{2n}$.
 
\item[iii.] {\bf Temperateness.} We say  that $g$ is temperate if there exist $\overline{C}>0$ and $J\in \mathbb{N}$ such that
$$ \left( \frac{g_{X}(T)}{g_{Y}(T)}\right)^{\pm1}\leq \overline{C}(1+g_{Y}^{\sigma}(X-Y))^{J},
$$ for all $X,Y,T\in \mathbb{R}^{2n}$.
\end{enumerate}
\end{definition}

Let $g$ be a H{\"o}rmander metric.  {\it {The uncertainty parameter}} or {\it the Planck function} associated to $g$ is defined by
  \begin{equation*}\label{hgn1}
       h_g(X)^2=\sup _{T\neq 0} \frac{g_{X}(T)}{g_{X}^{\sigma}(T) }.
  \end{equation*} 
Note that since $h_g(X)=(\lambda_{g}(X))^{-1}$, the uncertainty principle can be translated into the condition
$$ h_g(X)\leq 1 .$$

\begin{definition}[g-weight]\label{GW}  Let $M:\mathbb{R}^{2n}\rightarrow \mathbb{R}^{+}$ be a function. Then,
\begin{itemize}
\item we say that 
$M$ is $g$-{{\it continuous}} if there exists $\tilde{C}>0$
such that
$$ g_{X}(X-Y)\leq \frac{1}{\tilde{C}}\Longrightarrow\left( \frac{M(X)}{M(Y)}\right)^{\pm1}\leq \tilde{C}\,.$$
for all $X, Y\in\mathbb{R}^{2n}$.
\item we say that
$M$ is  $g$-{\it temperate} if there exist $\tilde{C}>0$ and $N\in \mathbb{N}$
such that
$$ \left( \frac{M(X)}{M(Y)}\right)^{\pm1}\leq \tilde{C}(1+g_{Y}^{\sigma}(X-Y))^{N}\,,$$
for all $X, Y\in\mathbb{R}^{2n}$.
\end{itemize}
We will say that  $M$ is a $g$-{\it weight}  if it is $g$-continuous and $g$-temperate.
\end{definition}
We can now define the classes of symbols adapted to a H\"ormander metric $g$ and a $g$-weight $M$.   
\begin{definition} For a H{\"o}rmander metric $g$ and a $g$-weight $M$, the class $S(M,g)$ consists of all smooth functions $\sigma\in C^\infty(\mathbb{R}^{2n})$ such that for any $k \in \mathbb{N}$ there exists $C_{k}>0$, such that for all
$X,T_{1},...,T_{k}\in \mathbb{R}^{2n}$ we have 
\begin{equation}\label{inwhk}
    |\sigma^{(k)}(X;T_{1}\otimes\cdots\otimes T_{k})|\leq C_{k}M(X)\prod_{i=1}^{k} g_{X}^{1/2}(T_{i})\,.
\end{equation}
The notation $\sigma^{(k)}$ stands for the $k^{th}$ derivative  of $\sigma$ and  $\sigma^{(k)}(X;T_{1},...,T_{k})$ denotes the $k^{th}$  derivative tensor of $a$ at $X$ in the directions $T_{1},...,T_{k}$. 
For $\sigma\in S(M,g)$ we denote by $\parallel \sigma\parallel_{k,S(M,g)}$ the minimum $C_{k}$ satisfying the above inequality. The class $S(M,g)$ becomes a Fr\'echet space endowed with the family of seminorms $\parallel \cdot\parallel_{k,S(M,g)}$.
\end{definition}


The action of the Weyl quantization on the Sobolev spaces is determined by the following theorem (cf. \cite{BC94}).
\begin{theorem}
\label{thm.cont}
Let $g$ be a H\"{o}rmander metric, and let $M_1,M_2$ be two $g$-weights. Then for any $a \in S(M,g)$ we have that
\[
a^{\tau}(x,D): H(M_1,g) \longrightarrow H(M_1/M,g)\,,
\] admits a bounded extension for all $\tau$.  Since $
H(1,g)=L^2
,$ in the  particular case $M_1=M,$ $a^{\tau} : H(M,g) \longrightarrow L^2,$ admits a bounded extension. 
\end{theorem}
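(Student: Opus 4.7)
My plan is to reduce to the Weyl quantisation ($\tau = 1/2$) and then deduce the result from two pillars of the Weyl-H\"ormander calculus: (i) the composition formula $a^w(x,D)\circ b^w(x,D) = (a\#b)^w(x,D)$ together with the fact that $a\#b \in S(M_1 M_2,g)$ whenever $a \in S(M_1,g)$ and $b \in S(M_2,g)$; and (ii) the $L^2$-boundedness theorem for operators with symbols in $S(1,g)$. The reduction to $\tau = 1/2$ uses the intertwining semigroup $(J_t)_{t\in\mathbb{R}}$ recalled in the preliminaries: for every $\tau \in [0,1]$ the map $J_{1/2 - \tau}$ acts continuously from $S(M,g)$ into itself, so $a^\tau(x,D) = (J_{1/2-\tau} a)^w(x,D)$ with $J_{1/2-\tau} a \in S(M,g)$, and it suffices to prove the result for the Weyl quantisation.

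The second ingredient I would put in place is a usable characterisation of the Sobolev space $H(M',g)$ attached to a $g$-weight $M'$: namely, $u \in H(M',g)$ iff $c^w(x,D)u \in L^2(\mathbb{R}^n)$ for every $c \in S(M',g)$, equivalently for a single fixed elliptic $c_0 \in S(M',g)$ admitting a parametrix in $S(1/M',g)$ (the existence of which is standard under the H\"ormander axioms for $g$ and the $g$-weight axioms for $M'$). Accepting this, in order to prove that $a^w(x,D) \colon H(M_1,g) \to H(M_1/M,g)$ extends boundedly, it is enough to show that for any $c \in S(M_1/M, g)$ and any $u \in H(M_1,g)$ we have $c^w(x,D) a^w(x,D) u \in L^2$, with the $L^2$-norm controlled by the $H(M_1,g)$-norm of $u$. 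Writing $c^w a^w = (c\#a)^w$, the composition theorem places $c\#a$ in $S\bigl((M_1/M)\cdot M, g\bigr) = S(M_1, g)$, and then the assumption $u \in H(M_1,g)$ yields $(c\#a)^w(x,D) u \in L^2$, with the desired norm bound.

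The main obstacle — and the one genuine input from the calculus — is the composition theorem itself, whose proof analyses the oscillatory integral
\[
(c\#a)(X) = \frac{1}{\pi^{2n}} \int\int e^{-2 i\sigma(X-Y_1,X-Y_2)} c(Y_1) a(Y_2)\, dY_1\, dY_2
\]
by combining stationary-phase arguments with the slowness and temperateness of $g$ and the $g$-continuity and $g$-temperateness of $M_1/M$ and $M$, producing an asymptotic expansion of $c\#a$ controlled in finitely many seminorms of $c$ and $a$. This is exactly the content of Bony-Chemin \cite{BC94} (see also H\"ormander \cite[Sec.~18.5]{HormanderBook34} and \cite[Ch.~2]{le:book}), so I would simply invoke it rather than reprove it.

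The remaining step, quantitative $L^2$-boundedness of $\textnormal{Op}(S(1,g))$, follows from the Bony-Lerner Calder\'on-Vaillancourt-type theorem, which provides control of the operator norm by finitely many seminorms $\|\cdot\|_{k,S(1,g)}$. Together with the continuity of the composition $S(M_1/M,g)\times S(M,g)\to S(M_1,g)$ in the relevant seminorms, this yields both the boundedness of the extension $a^\tau(x,D)\colon H(M_1,g)\to H(M_1/M,g)$ and its continuous dependence on $a$. The special case $M_1 = M$ then gives $a^\tau(x,D) \colon H(M,g) \to H(1,g) = L^2(\mathbb{R}^n)$.
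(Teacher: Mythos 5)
The paper does not prove Theorem~\ref{thm.cont} itself but only cites Bony--Chemin~\cite{BC94}, and your sketch is precisely the standard proof from that reference: reduce to the Weyl quantisation via the intertwining semigroup $J_t$, invoke the composition theorem $S(M_1/M,g)\#S(M,g)\subset S(M_1,g)$ together with the $L^2$-boundedness of $\textnormal{Op}(S(1,g))$, and use the characterisation of $H(M',g)$ via an invertible elliptic element of $\textnormal{Op}(S(M',g))$ with inverse in $\textnormal{Op}(S(1/M',g))$. One small correction: from the paper's convention $a^{w}(x,D)=(J_{-1/2}a)(x,D)$ one gets $a^{\tau}(x,D)=(J_{\tau-1/2}a)^{w}(x,D)$, not $(J_{1/2-\tau}a)^{w}(x,D)$; this sign error is harmless since $J_t$ preserves $S(M,g)$ for every $t\in\mathbb{R}$, but you should fix it.
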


For the general aspects, about the $L^p$ and H\"older boundedness of pseudo-differential operators we refer the reader to Beals \cite{Be,Be2} and to more recent works \cite{Delgado2006,Profe2} and \cite{CardonaHolder}.

\section{The H\"ormander metric adapted to the degenerate Hamiltonian}\label{Metric}
Motivated by the study of the family of operators given by \eqref{daho}, in this section, we will study the general metric \eqref{gen.metric} associated with the bigger class of pseudo-differential operators with symbol as in \eqref{hami1}. In particular, in Lemma \ref{class} we show that $a \in S(m,g)$ for $m,g$ as in the next theorem that applies to the general situation on $\mathbb{R}^n$.



\begin{theorem}\label{thmg1} Let $a_2\in S^2(\Ra^n\times\Ra^n)$ be non-negative $(\geq 0)$. We define the following Riemannian metric on $\Ra^n\times\Ra^n$, 
\begin{equation}\label{gen.metric}
    g_{X}(dx,d\xi):= m^{-1}(x,\xi)(\langle \xi \rangle^{2}+|x|^2)dx^{2}+m^{-1}(x,\xi)d\xi^{2}\,,
\end{equation}
where
\begin{equation}\label{gen.weight}m(x,\xi)=a(x,\xi)+\jpxe,\end{equation}
and the symbol $a$ is given by 
\begin{equation}\label{gen.symbol}
    a(x,\xi)=a_2(x,\xi)+|x|^2\,.
\end{equation}
Then  $g$ is a H\"ormander metric.
\end{theorem}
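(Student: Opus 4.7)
The proof will verify the three axioms of Definition \ref{HM}---uncertainty principle, slow continuity, and temperateness---in turn. After a preliminary computation of the dual metric, everything reduces to a pair of pointwise gradient bounds on the weight $m$, obtained from the Glaeser--Malgrange inequality applied to the non-negative symbol $a_{2}$.

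\emph{Uncertainty principle.} Write the metric in diagonal form with respect to the $(x,\xi)$-splitting,
\[
g_{X}(dx,d\xi) \;=\; \frac{\langle X\rangle^{2}}{m(X)}\,|dx|^{2} + \frac{1}{m(X)}\,|d\xi|^{2},
\]
so that a direct computation using the symplectic form $\sigma$ yields
\[
g_{X}^{\sigma}(T) \;=\; m(X)\,|t_{x}|^{2} + \frac{m(X)}{\langle X\rangle^{2}}\,|t_{\xi}|^{2}.
\]
From the product structure of $g_{X}$ and $g_{X}^{\sigma}$, one reads off $h_{g}(X)^{2} = \langle X\rangle^{2}/m(X)^{2}$; since $m(X) = a(X) + \langle X\rangle \ge \langle X\rangle$ by construction, $h_{g}(X) \le 1$.

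\emph{Key gradient estimates.} The technical core of the proof is the pair of bounds
\[
|\nabla_{x} m(X)|^{2} \lesssim \langle X\rangle^{2}\,m(X), \qquad |\nabla_{\xi} m(X)|^{2} \lesssim m(X).
\]
Since $a_{2}\ge 0$ lies in $S^{2}$, the second derivatives $|\partial_{x}^{2}a_{2}|$ and $|\partial_{\xi}^{2}a_{2}|$ are bounded by $C\langle\xi\rangle^{2}$ and by a constant respectively, so the Glaeser--Malgrange lemma applied fiberwise gives $|\partial_{x}a_{2}|^{2}\lesssim\langle\xi\rangle^{2}a_{2}$ and $|\partial_{\xi}a_{2}|^{2}\lesssim a_{2}$. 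Combining these with the trivial contributions of $|x|^{2}$ and $\langle X\rangle$ in $m = a_{2} + |x|^{2} + \langle X\rangle$, and absorbing lower-order terms via $a_{2}\le m$, $|x|^{2}+1\le\langle X\rangle^{2}$, and $\langle X\rangle\le m$, yields the displayed gradient bounds.

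\emph{Slow continuity and temperateness.} Both are obtained by mean-value integration along the segment $X_{t}=X+t(Y-X)$. For slowness, on a $g_{X}$-ball of small radius $r$ one has $|x-y|^{2}\le rm(X)/\langle X\rangle^{2}$ and $|\xi-\eta|^{2}\le rm(X)$; substituting into the integrated gradient bound and closing the standard bootstrap loop on the intermediate point $X_{t}$ produces $|m(Y)-m(X)| \le C\sqrt{r}\,m(X)$, and an analogous (easier) estimate for $\langle X\rangle^{2}$, hence the slow continuity of $g$. For temperateness one integrates without the small-radius assumption, working with $\sqrt{m}$ to exploit the factor $\sqrt{m}$ in the gradient bound; rearranging in terms of $g_{Y}^{\sigma}(X-Y) = m(Y)|x-y|^{2} + (m(Y)/\langle Y\rangle^{2})|\xi-\eta|^{2}$ produces polynomial bounds on $m(X)/m(Y)$ and $\langle X\rangle/\langle Y\rangle$ in powers of $(1+g_{Y}^{\sigma}(X-Y))$, from which the temperateness of $g$ follows. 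The main obstacle throughout is the gradient estimate on $m$: without the non-negativity of $a_{2}$ one only has $|\partial_{x}a_{2}|\lesssim\langle\xi\rangle^{2}$, which is far too large to yield slow continuity wherever $a_{2}$ is small and $\langle\xi\rangle$ is large---the hypothesis $a_{2}\ge 0$ enters essentially through Glaeser--Malgrange, and once this bound is in hand the rest is routine Weyl--H\"ormander bookkeeping in the style of Lerner \cite{le:book}.
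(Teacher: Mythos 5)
Your proposal is correct and follows essentially the same route as the paper: both verify the three H\"ormander axioms in turn, and both hinge on the Glaeser--Malgrange inequality applied to the non-negative symbol $a_{2}$ (Proposition \ref{postfi2} of the paper) to control first-order derivatives of $m$. The only difference is organizational: you package the Glaeser--Malgrange step as abstract gradient bounds $|\nabla_{x}m|^{2}\lesssim\langle X\rangle^{2}m$, $|\nabla_{\xi}m|^{2}\lesssim m$ and then integrate along the segment, whereas the paper writes out a second-order Taylor expansion of $a_{2}$ term by term and separately controls the companion weight $\langle\xi\rangle^{2}+|x|^{2}$ via the auxiliary Shubin-type metric $g_{s}$ of Lemma \ref{shub1}; these are the same estimate in different clothing.
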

Before giving the proof, the following inequality will be useful to prove the continuity for the metric $g$ and  it follows by a simple argument using  Taylor's formula of order $2$.%

\begin{proposition}\label{postfi2} Let $f$ be a non-negative function in $C^2(\Ra)$. Then
\[(f'(t))^2\leq \norm{ f''}_{L^{\infty}}f(t), \mbox{ for all }t\in\Ra .\]
\end{proposition}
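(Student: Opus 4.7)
The plan is to reduce the pointwise inequality to the classical Landau–Kolmogorov type estimate: for a $C^2$ function bounded below by $0$, the square of the first derivative is controlled by the product of the function value and the sup norm of the second derivative. I would carry this out by freezing the base point $t$ and turning the statement into a discriminant argument for a quadratic polynomial.

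First, I would fix $t\in\mathbb{R}$ and apply Taylor's formula of order $2$ with Lagrange remainder: for every $h\in\mathbb{R}$ there is $\xi=\xi(t,h)$ between $t$ and $t+h$ with
\[
f(t+h)=f(t)+f'(t)\,h+\tfrac{1}{2}f''(\xi)\,h^{2}.
\]
Bounding $f''(\xi)\le\|f''\|_{L^{\infty}}$ and using the assumption $f(t+h)\ge 0$, I obtain the one-parameter family of inequalities
\[
0\le f(t)+f'(t)\,h+\tfrac{1}{2}\|f''\|_{L^{\infty}}\,h^{2},\qquad \forall\,h\in\mathbb{R}.
\]

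Second, I would read this as the non-negativity of a quadratic polynomial in $h$ with leading coefficient $\tfrac{1}{2}\|f''\|_{L^{\infty}}\ge 0$; hence its discriminant must be non-positive:
\[
(f'(t))^{2}-4\cdot\tfrac{1}{2}\|f''\|_{L^{\infty}}\cdot f(t)\le 0,
\]
which is precisely the asserted inequality (up to an inessential constant). Equivalently, one can minimise the right-hand side in $h$ by choosing $h^{*}=-f'(t)/\|f''\|_{L^{\infty}}$ (when $\|f''\|_{L^{\infty}}>0$) and impose non-negativity of the resulting value, yielding the same conclusion; the degenerate case $\|f''\|_{L^{\infty}}=0$ forces $f$ to be affine and non-negative, so $f'\equiv 0$ whenever $f$ vanishes, and the inequality is trivial.

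There is no serious obstacle here: the only mild point is to make sure that the argument works at every $t$ uniformly and that the case $\|f''\|_{L^{\infty}}=+\infty$ is handled trivially (the inequality is vacuous). The proof is a completely standard two-line application of Taylor's theorem plus a discriminant check, and the non-negativity of $f$ is used in exactly one place, namely to guarantee the one-sided bound on the Taylor expansion.
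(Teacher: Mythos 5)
Your approach is exactly the one the paper sketches (it says the estimate ``follows by a simple argument using Taylor's formula of order $2$''), so methodologically you are fully aligned: second-order Taylor expansion around $t$, bound the remainder by $\|f''\|_{L^\infty}$, use $f\ge 0$ to get a quadratic in $h$ that is non-negative for all $h$, and read off the discriminant condition.

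One point deserves to be made explicit rather than waved away as ``an inessential constant'': what your argument actually yields is
\[
(f'(t))^2 \le 2\,\|f''\|_{L^\infty}\,f(t),
\]
and the factor $2$ is \emph{sharp} (take $f(t)=t^2$, for which $(f')^2=4t^2$ while $\|f''\|_\infty f = 2t^2$), so the inequality as printed in the proposition, with constant $1$, is in fact false. This is almost surely a typo in the paper; the subsequent use of Proposition~\ref{postfi2} in the proof of Theorem~\ref{thmg1} only invokes the estimate up to unspecified multiplicative constants $C$, so the missing factor of $2$ is harmless for the application. Your handling of the degenerate case $\|f''\|_{L^\infty}=0$ (forcing $f$ affine, hence constant since $f\ge 0$, so $f'\equiv 0$) is correct, and the remark about $\|f''\|_{L^\infty}=+\infty$ is also fine since then the inequality is vacuous. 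Overall the proof is sound; just state the constant $2$ explicitly and note the sharpness, rather than leaving it implicit.
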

Also the following lemma will be used to simplify the proof  of continuity and temperateness of the metric.
\begin{lemma}\label{shub1} Let $g_s$ be the Riemannian metric on $\Ra^n\times\Ra^n$ defined by
\[(g_s)_X:= dx^{2}+\frac{d\xi^{2}}{\jp^2+|x|^2}.\]

Then $g_s$ is a H\"ormander metric.
\end{lemma}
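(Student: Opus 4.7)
The plan is to verify the three defining conditions of a H\"ormander metric for $g_{s}$ one at a time, using throughout the shorthand $\lambda(X):=\jp^{2}+|x|^{2}=\jpxet^{2}$ with $X=(x,\xi)$, so that $(g_{s})_{X}(T_{x},T_{\xi})=|T_{x}|^{2}+|T_{\xi}|^{2}/\lambda(X)$. The uniform lower bound $\lambda(X)\geq 1$ will be used repeatedly to absorb additive constants. The backbone of all three arguments is the pair of elementary estimates $|y|^{2}\leq 2|x|^{2}+2|x-y|^{2}$ and $|\eta|^{2}\leq 2|\xi|^{2}+2|\xi-\eta|^{2}$, which together give the master inequality
\[
\lambda(Y)\leq 2\lambda(X)+2|x-y|^{2}+2|\xi-\eta|^{2}.
\]

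For slowness, I would assume $(g_{s})_{X}(X-Y)\leq\varepsilon$, which forces $|x-y|^{2}\leq\varepsilon$ and $|\xi-\eta|^{2}\leq\varepsilon\lambda(X)$. Substituting these bounds into the master inequality and using $\lambda(X)\geq 1$ to swallow the stray constant $2\varepsilon$ yields $\lambda(Y)\leq C_{\varepsilon}\lambda(X)$, and a symmetric argument with $X$ and $Y$ swapped gives the reverse bound for $\varepsilon$ small enough. Since for any $T\neq 0$ the ratio $(g_{s})_{X}(T)/(g_{s})_{Y}(T)$ is squeezed between $\min(1,\lambda(Y)/\lambda(X))$ and $\max(1,\lambda(Y)/\lambda(X))$, slowness follows.

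For the uncertainty principle, I would compute the dual metric directly. With the symplectic pairing $\sigma(T,W)=\langle W_{x},T_{\xi}\rangle-\langle T_{x},W_{\xi}\rangle$, the standard Cauchy--Schwarz optimisation for a diagonal split metric $A|T_{x}|^{2}+B|T_{\xi}|^{2}$ yields the dual metric $B^{-1}|T_{x}|^{2}+A^{-1}|T_{\xi}|^{2}$. Applied to our case this gives $(g_{s})_{X}^{\sigma}(T)=\lambda(X)|T_{x}|^{2}+|T_{\xi}|^{2}$, and an immediate calculation produces
\[
\frac{(g_{s})_{X}^{\sigma}(T)}{(g_{s})_{X}(T)}=\lambda(X)\geq 1\quad\text{for every }T\neq 0,
\]
so that $\lambda_{g_{s}}(X)=\sqrt{\lambda(X)}\geq 1$ as required.

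For temperateness, I would revisit the master inequality and rewrite its right-hand side in terms of $(g_{s})_{Y}^{\sigma}(X-Y)=|\xi-\eta|^{2}+\lambda(Y)|x-y|^{2}$. Since $\lambda(Y)\geq 1$, this dual quantity dominates $|\xi-\eta|^{2}+|x-y|^{2}$, so the master inequality gives $\lambda(Y)\leq 2\lambda(X)+4(g_{s})_{Y}^{\sigma}(X-Y)$. Dividing by $\lambda(X)\geq 1$ and exchanging the roles of $X$ and $Y$ supplies both directions of the temperateness bound with exponent $J=1$, and the ratio $(g_{s})_{X}(T)/(g_{s})_{Y}(T)$ is controlled as in the slowness step. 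The main obstacle is not analytic depth but careful bookkeeping: one must consistently distinguish which point is the base for the dual norm and deploy $\lambda\geq 1$ at the right moments so that additive error terms can be absorbed into the multiplicative bound on $\lambda(X)/\lambda(Y)$.
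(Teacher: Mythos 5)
Your proof is correct and is more self-contained than the paper's. The main divergence is in the slowness argument: the paper introduces the standard Shubin metric $G_s = (1+|x|^2+|\xi|^2)^{-1}(dx^2+d\xi^2)$, observes that $G_s \lesssim g_s$, and then outsources the work to the known continuity of the weight $1+|x|^2+|\xi|^2$ with respect to $G_s$; you instead derive the two-sided comparability $\lambda(X)\asymp\lambda(Y)$ directly from the elementary inequality $\lambda(Y)\leq 2\lambda(X)+2|x-y|^2+2|\xi-\eta|^2$, which avoids taking a reference metric's properties on faith. For the uncertainty principle, your observation that $(g_s)_X^{\sigma}(T)/(g_s)_X(T)=\lambda(X)$ identically in $T$ is a genuine sharpening of the paper's one-line remark: it identifies the Planck function exactly as $h_{g_s}(X)=\lambda(X)^{-1/2}$ rather than merely confirming $\lambda_{g_s}\geq 1$. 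The temperateness steps are closest in spirit — both yield exponent $J=1$ — with the paper splitting $\lambda(X)/\lambda(Y)=1+(\lambda(X)-\lambda(Y))/\lambda(Y)$ and estimating the difference, while you read the same bound off the master inequality after noting $g_Y^{\sigma}(X-Y)\geq |x-y|^2+|\xi-\eta|^2$. One small point to tighten in your write-up: the ``symmetric argument'' producing $\lambda(X)\lesssim\lambda(Y)$ from the hypothesis $(g_s)_X(X-Y)\leq\varepsilon$ is not a literal swap, since the ball is measured at $X$; it works because the swapped master inequality $\lambda(X)\leq 2\lambda(Y)+2|x-y|^2+2|\xi-\eta|^2$ combined with $|\xi-\eta|^2\leq\varepsilon\lambda(X)$ lets you absorb the $\lambda(X)$-term on the right for $\varepsilon$ small — your ``for $\varepsilon$ small enough'' signals this, but the absorption step deserves to be made explicit.
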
%
\begin{proof} Since $1\leq \langle \xi\rangle^2 +|x|^2,$ and $g_s^{\sigma}(dx,d\xi)=(\jp^2+|x|^2)dx^{2}+d\xi^2$, the uncertainty principle follows.\\%

 We note that %
  \[(G_s)_X(dx,d\xi):=\frac{dx^{2}}{1+|\xi|^2+|x|^2}+\frac{d\xi^{2}}{1+|\xi|^2+|x|^2}\leq C(g_s)_X.\]%
 Since the weight $1+|\xi|^2+|x|^2$ is continuous with respect to the Shubin metric $G_s$,  it follows that it is also continuous with respect to $g_s$. Hence $g_s$ is continuous.\\
 
To prove the temperateness of $g_s$,  we will see that  
\[\frac{\jp^2+|x|^2}{\langle \eta\rangle^2 +|y|^2} \leq C((g_s)_Y^{\sigma}(X-Y)+1)^N,\] 
 for suitable $C, N$.\\

We observe that \[\frac{\jp^2+|x|^2}{\langle \eta\rangle^2 +|y|^2}=\frac{\jp^2+|x|^2-\langle \eta\rangle^2-|y|^2}{\langle \eta\rangle^2 +|y|^2}+1=\frac{\jp^2-\langle \eta\rangle^2 +|x|^2-|y|^2}{\langle \eta\rangle^2 +|y|^2}+1.\]
 Now
 \begin{equation}
\frac{\jp^2-\langle \eta\rangle^2 }{\langle \eta\rangle^2 +|y|^2}\leq C\frac{(|\xi-\eta|^2+|\eta|^2)}{\langle \eta\rangle^2+|y|^2}\leq C(|\xi-\eta|^2+1)\leq C((g_s)_Y^{\sigma}(X-Y)+1).\label{swcont5}\end{equation}
The other term $\frac{|x|^2-|y|^2}{\jpe^2+|y|^2}$ can be estimated similarly. Thus $g_s$ is temperate and this concludes the proof of the lemma.
\end{proof}%

\begin{proof} (Proof of Theoren \ref{thmg1}.)  The uncertainty principle follows from   
 \[\jp^2+|x|^2\leq (a_2(x,\xi)+|x|^2+\jpxe)^2,\]%
 which holds immediately.\\%
 
 To prove the continuity, we note that %
 \begin{equation}\label{g.product}g_X=\frac{\jp^2+|x|^2}{a_2+|x|^2+\jpxe}\left(dx^2+\frac{d\xi^2}{\jp^2+|x|^2}\right)\geq C'(g_s)_X\geq C(G_s)_X,\end{equation}
 since $\frac{\jp^2+|x|^2}{m(X)}\geq C'$, and where $G_s$ is the Shubin metric as in the proof of Lemma \ref{shub1}. But $\jp^2+|x|^2$ is a continuous weight with respect to $G_s$, so the same holds true with respect to $g$ as well. \\
 
 The continuity of $g$ is now reduced to the study of the ratio $\frac{m(X)}{m(Y)}$, which can be reduced to the analysis of $\,\frac{m(X)-m(Y)}{m(Y)}$.\\
 
 So we first note that %
 \[m(X)-m(Y)=(a_2(X)-a_2(Y))+|x|^2-|y|^2+\jpxe-\jpye.\]
 For the term $a_2(X)-a_2(Y)$, we use the Taylor's inequality of order $2$:
 \[|a_2(X)-a_2(Y)-da_2(Y)\cdot (X-Y)|\leq C\sup_{t\in [0,1]}|d^2a_2(Y+t(X-Y))\cdot (X-Y)^2|\,.\]
 Since  $a_2\in S^2(\Ran\times\Ran)$, we have
 \[\sup_{t\in [0,1]}|d^2a_2(Y+t(X-Y))\cdot (X-Y)^2|\leq  C(\jpe^2|x-y|^2+|\xi-\eta|^2).\]
 On the other hand, by  Proposition \ref{postfi2} applied to the  partial derivatives of $a_2$, we get
 \begin{align*}
 \sup_{t\in [0,1]}|da_2(Y+t(X-Y))\cdot (X-Y)|\leq &  C(|\partial_xa_2(Y)\cdot (x-y)|+|\partial_{\xi}a_2(Y)\cdot (\xi-\eta)|.\\
     \leq & Ca_2^{\half}(Y)\jpe|x-y|+Ca_2^{\half}(Y)|\xi-\eta|.
 \end{align*}
 Therefore  
 \[  |a_2(X)-a_2(Y)|\leq C(a_2^{\half}(Y)\jpe|x-y|+a_2^{\half}(Y)|\xi-\eta|+\jpe^2|x-y|^2+|\xi-\eta|^2)\,. \]
 Now %
 \[\frac{|a_2(X)-a_2(Y)|^2}{m^2(Y)}\leq\]
 \begin{align*}
  \leq &   C\left(\frac{a_2(Y)\jpe^2|x-y|^2}{(a+\jpye)^2}+\frac{a_2(Y)|\xi-\eta|^2}{(a+\jpye)^2}+\frac{\jpe^4|x-y|^4}{(a+\jpye)^2}+\frac{|\xi-\eta|^4}{(a+\jpye)^2}\right)\\
   \leq &   C\left(\frac{(\jpe^2+|y|^2)|x-y|^2}{a+\jpye}+\frac{|\xi-\eta|^2}{a+\jpye}\right)+C\left( \frac{(\jpe^2+|y|^2)^2|x-y|^4}{(a+\jpye)^2}+\frac{|\xi-\eta|^4}{(a+\jpye)^2}\right)\\
 \leq & C(g_Y(X-Y)+(g_Y(X-Y))^2).%
 \end{align*}
Hence
\begin{equation}\label{a1i21}
     \frac{|a_2(X)-a_2(Y)|}{m(Y)}\leq C \left(g_Y(X-Y))^{\half}+g_Y(X-Y)\right).
\end{equation}
 
 For the term $\frac{\jpxe-\jpye}{m(Y)}$, we note that $|\jpxe-\jpye|\leq C(||\xi|-|\eta||+||x|-|y||)$. We have
 \[\frac{(|\xi|-|\eta|)^2}{m^2(Y)}\leq  \frac{|\xi-\eta|^2}{m^2(Y)}\leq \frac{|\xi-\eta|^2}{a+\jpye}\leq Cg_Y(X-Y). \]
 We also have
 \[\frac{(|x|-|y|)^2}{m^2(Y)}\leq  \frac{|x-y|^2}{m^2(Y)}\leq \frac{(\jpe^2+|y|^2)|x-y|^2}{a+\jpye}\leq Cg_Y(X-Y). \]
 Thus 
\begin{equation}\label{aux23}
 \frac{|\jp-\jpe|}{m(Y)}\leq C(g_Y(X-Y))^{\half}.
 \end{equation}
The term $\frac{||x|^2-|y|^2|}{m(Y)}$ can be analysed in  a similar way, obtaining 
\begin{equation}\label{hy78}
  \frac{||x|^2-|y|^2|}{m(Y)}\leq C \frac{|x-y|^2+|y|^2}{m(Y)}\leq C\left(\frac{(\jpe^2+|y|^2)|x-y|^2}{m(Y)} +1\right) =   C(g_Y(X-Y)+1).
\end{equation}
 By  \eqref{a1i21}, \eqref{aux23} and \eqref{hy78}, we get
\begin{equation}\label{mctw1}
\frac{m(X)}{m(Y)}\leq C(g_Y(X-Y)+g_Y^{\half}(X-Y))+1)\leq C(g_Y(X-Y)+1).
\end{equation}

 Therefore, the metric $g$ is continuous.\\%

In order to prove the temperateness of $g$, we first observe that from the proof of Lemma \ref{shub1}, by using the first inequality in \eqref{swcont5}, we obtain  
\begin{eqnarray*}
\frac{|\jp^2-\langle \eta\rangle^2| }{\langle \eta\rangle^2 +|y|^2}\leq & C\left(\frac{|\xi-\eta|^2}{\langle \eta\rangle^2 +|y|^2}+1\right)\\
\leq & C  \left(\frac{a(Y)+\jpye}{\langle \eta\rangle^2 +|y|^2}|\xi-\eta|^2+1\right)\\
\leq & C(g_Y^{\sigma}(X-Y)+1).
\end{eqnarray*}
In a similar way, for the term $\frac{|x|^2-|y|^2}{\jpe^2+|y|^2}$ we have
\begin{eqnarray*}
\frac{||x|^2-|y|^2|}{\jpe^2+|y|^2}\leq & C\left(\frac{|x-y|^2}{\langle \eta\rangle^2 +|y|^2}+1\right)\\
\leq & C\left((a(Y)+\jpye)|x-y|^2+1\right)\\
\leq &C(g_Y^{\sigma}(X-Y)+1).
\end{eqnarray*}%

On the other hand, for the ratio $\frac{m(X)}{m(Y)}=\frac{a(X)+\jpX}{a(Y)+\jpye}$, we note that by \eqref{mctw1} and since $g\leq g^{\sigma}$ , we get
\begin{equation}\label{mh54s}
\frac{m(X)}{m(Y)}\leq C(g_Y(X-Y)+1)\leq C(g_Y^{\sigma}(X-Y)+1).\end{equation}
Therefore $g$ is temperate and we conclude the proof.
\end{proof}

As a consequence of \eqref{mh54s} we have that  $m$ is indeed a $g$-weight.
\begin{corollary}
The function $m$ given in \eqref{gen.weight} is a $g$-weight with respect to the metric $g$ defined by \eqref{gen.metric}.
\end{corollary}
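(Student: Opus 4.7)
The plan is to read off both halves of the definition of a $g$-weight directly from estimates already produced in the proof of Theorem \ref{thmg1}. Recall that those arguments established the two key inequalities
\begin{equation*}
\frac{m(X)}{m(Y)}\leq C(1+g_{Y}(X-Y)) \quad \text{(from \eqref{mctw1})}
\end{equation*}
and
\begin{equation*}
\frac{m(X)}{m(Y)}\leq C(1+g_{Y}^{\sigma}(X-Y)) \quad \text{(from \eqref{mh54s})},
\end{equation*}
so only the reverse ratios $m(Y)/m(X)$ require attention.

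First, I would verify $g$-continuity. Choose $\tilde C$ so that $g_Y(X-Y)\leq 1/\tilde C$ implies $g_Y(X-Y)+1 \leq 2$; then \eqref{mctw1} gives $m(X)/m(Y)\leq 2C$. For the reverse ratio $m(Y)/m(X)$, I would swap the roles of $X$ and $Y$ in \eqref{mctw1} to obtain $m(Y)/m(X)\leq C(1+g_{X}(X-Y))$, and then invoke the already established slowness of $g$ (Theorem \ref{thmg1}) to replace $g_X(X-Y)$ by a multiple of $g_Y(X-Y)$ provided $g_Y(X-Y)$ is small enough. Shrinking $\tilde C$ further if necessary, both $(m(X)/m(Y))^{\pm 1}$ are bounded, giving $g$-continuity.

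Next, I would deduce $g$-temperateness in the same way from \eqref{mh54s}. That inequality gives the bound in one direction immediately. For the reverse, I swap $X$ and $Y$ to get $m(Y)/m(X)\leq C(1+g_X^{\sigma}(X-Y))^{N_1}$; then I appeal to the temperateness of $g$ (already proven in Theorem \ref{thmg1}), which provides an estimate of the shape $g_X^{\sigma}(X-Y)\leq \overline C(1+g_Y^{\sigma}(X-Y))^{J}$. Combining these two inequalities yields $(m(X)/m(Y))^{\pm 1}\leq \tilde C(1+g_Y^{\sigma}(X-Y))^{N}$ with $N=J$, as required by Definition \ref{GW}.

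Since no genuinely new computation is needed and the hard work has already been absorbed into the proof of Theorem \ref{thmg1}, the only subtle point is to be careful with the symmetry argument: the slowness/temperateness constants for $g$ used when swapping the roles of $X$ and $Y$ must be the ones supplied by Theorem \ref{thmg1}, not ad hoc ones. Once that bookkeeping is made explicit the corollary follows at once.
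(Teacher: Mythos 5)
Your proof is correct and supplies, in explicit form, the standard argument that the paper compresses into the one-line remark preceding the corollary. The essential observation you make is right: the displayed estimates \eqref{mctw1} and \eqref{mh54s} control the ratio $m(X)/m(Y)$ only in one direction, so the reverse ratio $m(Y)/m(X)$ must be obtained by interchanging the roles of $X$ and $Y$ (legitimate, since every step in the proof of Theorem \ref{thmg1} can equally be run with the Taylor expansion centred at the other point), and then the resulting $g_X$- and $g_X^{\sigma}$-quantities traded for $g_Y$- and $g_Y^{\sigma}$-quantities using the slowness and temperateness of $g$ that Theorem \ref{thmg1} has already established. One small precision in the temperateness step: Definition \ref{HM} controls $(g_X(T)/g_Y(T))^{\pm 1}$, not $g^{\sigma}$ directly, so the bound $g_X^{\sigma}(X-Y)\lesssim (1+g_Y^{\sigma}(X-Y))^{J}$ that you invoke needs one extra line --- either dualize via \eqref{gsigma}, or use the uncertainty principle $g_Y\le g_Y^{\sigma}$ together with temperateness to write $g_X(X-Y)\le \overline C\, g_Y(X-Y)\,(1+g_Y^{\sigma}(X-Y))^{J}\le \overline C(1+g_Y^{\sigma}(X-Y))^{J+1}$ --- which shifts $J$ to $J+1$ but is otherwise harmless. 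Likewise, for continuity the slowness hypothesis makes $g_X(X-Y)$ and $g_Y(X-Y)$ comparable whenever either is small, so it does not matter which of the two you take as the smallness assumption to match Definition \ref{GW}. Neither point affects the validity of your argument.
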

\begin{lemma}\label{class:smg} Let us consider the H\"ormander metric $g$ defined by \eqref{gen.metric} and the $g$-weight in \eqref{gen.weight}. Then, a smooth symbol $\sigma:=\sigma(x,\xi)\in S(m,g)$ if and only if
\begin{equation}
    |\partial_x^{\beta}\partial_{\xi}^{\alpha}\sigma(x,\xi)|\leq C_{\alpha\beta}m^{1-\frac{|\alpha|}{2}-\frac{|\beta|}{2}}(X)(\jp^2+|x|^2)^{\frac{|\beta|}{2}}.
\end{equation}
\end{lemma}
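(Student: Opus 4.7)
The statement is essentially a translation of the coordinate-free tensor bound \eqref{inwhk} defining $S(m,g)$ into an equivalent coordinate-wise bound on partial derivatives, using the explicit diagonal form of the metric $g$ in the $(x,\xi)$-splitting. The plan is to prove both implications by reading off how $g_X^{1/2}$ acts on Euclidean basis vectors and on arbitrary tangent vectors.

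\emph{Necessity.} Assume $\sigma \in S(m,g)$, so that \eqref{inwhk} holds for all $X,T_1,\dots,T_k\in \mathbb{R}^{2n}$. From the explicit formula \eqref{gen.metric} one has the diagonal computations
\[
g_X^{1/2}(e_j^{x})=\left(\frac{\langle\xi\rangle^2+|x|^2}{m(X)}\right)^{1/2},\qquad g_X^{1/2}(e_j^{\xi})=\frac{1}{m(X)^{1/2}},
\]
for the standard Euclidean basis vectors in the $x$ and $\xi$ coordinates. Since $\partial_x^{\beta}\partial_\xi^{\alpha}\sigma(X)=\sigma^{(k)}(X;e_{x}^{\otimes\beta}\otimes e_{\xi}^{\otimes\alpha})$ with $k=|\alpha|+|\beta|$, specializing \eqref{inwhk} to this choice of $T_i$'s yields
\[
|\partial_x^{\beta}\partial_\xi^{\alpha}\sigma(X)|\leq C_{\alpha,\beta}\, m(X)\left(\frac{\langle\xi\rangle^2+|x|^2}{m(X)}\right)^{|\beta|/2}\frac{1}{m(X)^{|\alpha|/2}}=C_{\alpha,\beta}\,m(X)^{1-\frac{|\alpha|+|\beta|}{2}}(\langle\xi\rangle^2+|x|^2)^{|\beta|/2},
\]
which is the stated inequality.

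\emph{Sufficiency.} Conversely, assume the coordinate estimate. For arbitrary $T_i=(T_i^{x},T_i^{\xi})\in \mathbb{R}^{n}\times\mathbb{R}^{n}$, the multilinear form $\sigma^{(k)}(X;T_1\otimes\cdots\otimes T_k)$ expands as a finite sum, over all choices of $2n$ coordinate indices $j_1,\dots,j_k$, of terms of the form $(\partial^{\alpha,\beta}\sigma)(X)\prod_{i=1}^k T_i^{j_i}$, where $(\alpha,\beta)$ records how many of the $j_i$ fall into the $\xi$- and $x$-coordinates respectively. From the defining expression of $g_X$ we read off the componentwise bounds
\[
|T_i^{x_l}|\leq \left(\frac{m(X)}{\langle\xi\rangle^2+|x|^2}\right)^{1/2}g_X^{1/2}(T_i),\qquad |T_i^{\xi_l}|\leq m(X)^{1/2}g_X^{1/2}(T_i).
\]
Inserting these into each term of the expansion and applying the hypothesis to $\partial_x^\beta\partial_\xi^\alpha\sigma$, the powers of $m(X)$ and $(\langle\xi\rangle^2+|x|^2)^{1/2}$ telescope exactly: the factor $m^{1-(|\alpha|+|\beta|)/2}(\langle\xi\rangle^2+|x|^2)^{|\beta|/2}$ from the derivative estimate combines with $m^{(|\alpha|+|\beta|)/2}(\langle\xi\rangle^2+|x|^2)^{-|\beta|/2}$ from the components to leave only $m(X)\prod_{i=1}^k g_X^{1/2}(T_i)$, which is the required inequality \eqref{inwhk}.

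The argument is essentially bookkeeping; the only step that deserves care is verifying that the powers of $m$ and of $\langle\xi\rangle^2+|x|^2$ cancel correctly after collecting the $|\alpha|$ factors coming from $\xi$-components and the $|\beta|$ factors coming from $x$-components, which follows directly from the diagonal structure of the metric \eqref{gen.metric}. No additional property of $a_2$, beyond the form of $g$ and $m$, is needed for the lemma itself.
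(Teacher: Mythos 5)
Your proposal is correct and follows essentially the same approach as the paper's proof: read off the diagonal structure of $g_X$ on canonical basis vectors and match the resulting powers of $m$ and $\langle\xi\rangle^2+|x|^2$. The only difference is cosmetic — you spell out the sufficiency direction via the multilinear expansion and the componentwise bounds on $T_i$, whereas the paper asserts the equivalence directly after noting the contribution of each canonical direction.
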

\begin{proof} The H\"ormander metric is given by
\begin{equation}
    g_{X}(dx,d\xi):= m^{-1}(x,\xi)(\langle \xi \rangle^{2}+|x|^2)dx^{2}+m^{-1}(x,\xi)d\xi^{2}.
\end{equation}The class  $S(m,g)$ consists of all smooth functions $\sigma\in C^\infty(\mathbb{R}^{2n})$ such that for any $k \in \mathbb{N}$ there exists $C_{k}>0$, such that for all
$X,T_{1},...,T_{k}\in \mathbb{R}^{2n}$ we have that
\begin{equation}\label{Proof:lemma:metric}
    |\sigma^{(k)}(X;T_{1}\otimes\cdots\otimes T_{k})|\leq C_{k}m(X)\prod_{i=1}^{k} g_{X}^{1/2}(T_{i}).
\end{equation} In consequence, $\sigma\in S(m,g),$ if and only if, for every $\alpha\in \mathbb{N}_0^n$ and for every $\beta\in \mathbb{N}_0^n,$ we have that
\begin{align*}
    |\partial_x^\beta\partial_\xi^\alpha \sigma(x,\xi)|\leq C_{\alpha,\beta}m(x,\xi)\left(m^{-1}(x,\xi)(\langle \xi \rangle^{2}+|x|^2)\right)^{\frac{|\beta|}{2}}(m^{-1}(x,\xi))^{\frac{|\alpha|}{2}},
\end{align*}since any partial derivative with respect to $x,$ in any canonical direction $x_i,$ contributes on the right hand side of \eqref{Proof:lemma:metric} with the factor
$$ \left(m^{-1}(x,\xi)(\langle \xi \rangle^{2}+|x|^2)\right)^{\frac{1}{2}}. $$ Also, any partial derivative with respect to $\xi,$ in any canonical direction $\xi_j,$ contributes on the right hand side of \eqref{Proof:lemma:metric} with the factor
$$ \left(m^{-1}(x,\xi))\right)^{\frac{1}{2}}. $$ Observing that
\begin{align*}
   m(x,\xi)&\left(m^{-1}(x,\xi)(\langle \xi \rangle^{2}+|x|^2)\right)^{\frac{|\beta|}{2}}m^{-1}(x,\xi)^{-\frac{|\alpha|}{2}}\\
   &=m(x,\xi)^{1-\frac{|\beta|}{2}-\frac{|\alpha|}{2}}(\langle \xi \rangle^{2}+|x|^2)^{\frac{|\beta|}{2}},
\end{align*}we conclude the proof of Lemma \ref{Proof:lemma:metric}.   
\end{proof}

We will now restrict our attention to non-negative symbols $a_2$ of differential operators of order $2$. 
\begin{definition} Let $C_b^{\infty}(\Ran)$ be the class of $C^{\infty}$ functions over $\Ran$ with bounded derivatives of any order. We denote by $\mbox{Diff }_+^2(\Ran)$ the class of differential operators of order $2$ on $\Ran$ with $C_b^{\infty}(\Ran)$ coefficients and non-negative symbol. 
\end{definition}
\begin{lemma}\label{class} Let  $a_2(x,D)\in {\mbox{Diff  }}_+^2(\Ran)$. We consider $a(x,\xi)=a_2(x,\xi)+|x|^2$ and the corresponding H\"ormander metric $g$ and the weight $m$ as in \eqref{gen.metric} and \eqref{gen.weight}, respectively. Then  $a,\, m \in S(m,g)$.
\end{lemma}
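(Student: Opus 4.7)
The plan is to apply Lemma \ref{class:smg}: it is enough to check, for all multi-indices $\alpha,\beta\in\mathbb{N}_0^n$, that both $\sigma=a$ and $\sigma=m$ satisfy
\[
|\partial_x^\beta\partial_\xi^\alpha \sigma(x,\xi)|\leq C_{\alpha,\beta}\,m(x,\xi)^{1-(|\alpha|+|\beta|)/2}(\langle\xi\rangle^2+|x|^2)^{|\beta|/2}.
\]
I would first record two size controls on $m$: an upper bound $m(x,\xi)\leq C(\langle\xi\rangle^2+|x|^2)$, obtained from $0\leq a_2\leq C(1+|\xi|^2)$ (since $a_2$ is, by hypothesis, a polynomial in $\xi$ of degree $\leq 2$ with $C_b^\infty$-coefficients), together with $|x|^2\leq\langle\xi\rangle^2+|x|^2$ and $\jpxe\leq\langle\xi\rangle^2+|x|^2$; and the trivial lower bound $m\geq\jpxe\geq 1$. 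These two controls will be used throughout to convert between $m$-powers and $(\langle\xi\rangle^2+|x|^2)$-powers.

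Next, I would prove $a_2\in S(m,g)$ by a short case analysis on $|\alpha|$. Because $\partial_x^\beta\partial_\xi^\alpha a_2$ is a polynomial of degree $2-|\alpha|$ in $\xi$ with $C_b^\infty$-coefficients when $|\alpha|\leq 2$ and vanishes for $|\alpha|\geq 3$, one has the crude bound $|\partial_x^\beta\partial_\xi^\alpha a_2|\leq C_{\alpha,\beta}(\langle\xi\rangle^2+|x|^2)^{(2-|\alpha|)/2}$ for $|\alpha|\leq 2$; together with $m\leq C(\langle\xi\rangle^2+|x|^2)$ this settles all cases with $|\alpha|+|\beta|\geq 2$. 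The delicate ``first-derivative'' cases $(|\alpha|,|\beta|)=(1,0)$ and $(0,1)$ are the ones where a genuine $m^{1/2}$ must be produced on the right-hand side: here I would invoke Proposition \ref{postfi2} applied to the non-negative function $a_2$, first in the $\xi_i$-direction (with $\partial_{\xi_i}^2 a_2$ bounded because $a_2$ is quadratic in $\xi$) to obtain $|\partial_{\xi_i}a_2|^2\leq Ca_2\leq Cm$, then in the $x_j$-direction (using $\|\partial_{x_j}^2 a_2(\cdot,\xi)\|_\infty\leq C(1+|\xi|^2)$) to obtain $|\partial_{x_j}a_2|^2\leq C(\langle\xi\rangle^2+|x|^2)m$.

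Since $S(m,g)$ is a vector space and $m=a_2+|x|^2+\jpxe$, it then suffices to verify that $|x|^2$ and $\jpxe$ lie in $S(m,g)$ as well. The symbol $|x|^2$ has vanishing derivatives of order $\geq 3$, and the low-order estimates follow trivially from the controls of the first paragraph. The symbol $\jpxe=(1+|x|^2+|\xi|^2)^{\half}$ is a classical symbol satisfying $|\partial_x^\beta\partial_\xi^\alpha \jpxe|\leq C_{\alpha,\beta}\jpxe^{1-|\alpha|-|\beta|}$; combining this with $\jpxe\leq m\leq C\jpxe^2$ and the identity $\jpxe^2=\langle\xi\rangle^2+|x|^2$ yields membership in $S(m,g)$. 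Linearity gives $m\in S(m,g)$ and, by the same token, $a=a_2+|x|^2\in S(m,g)$.

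The main obstacle is the ``first-derivative'' step for $a_2$: the purely symbolic estimate for $\partial_\xi a_2$ and $\partial_x a_2$ only provides bounds in terms of $(\langle\xi\rangle^2+|x|^2)^{1/2}$, which is in general much larger than $m^{1/2}$ on the degeneracy set $\{a_2=0\}$. To get the improvement to $m^{1/2}$, the non-negativity of $a_2$ must be used in an essential way through Proposition \ref{postfi2}. The remaining cases are a bookkeeping exercise distributing powers between $m$ and $\langle\xi\rangle^2+|x|^2$ using the two-sided bound from the first paragraph.
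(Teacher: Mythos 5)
Your proposal is correct and follows essentially the same route as the paper: reduction via Lemma~\ref{class:smg} to the symbol estimates, the trivial low/high-order cases handled by the two-sided comparison $1\leq\jpxe\leq m\leq C(\jp^2+|x|^2)$ and the vanishing of $\partial_\xi^\alpha a_2$ for $|\alpha|\geq 3$, the crucial first-order cases handled by Proposition~\ref{postfi2} applied to $a_2\geq 0$, and the inclusion $\jpxe\in S(\jpxe,G_s)\subset S(m,g)$ combined with linearity to conclude $a,m\in S(m,g)$. The only cosmetic difference is that you estimate $a_2$, $|x|^2$, and $\jpxe$ separately and then add, whereas the paper bundles $a=a_2+|x|^2$ in a single computation; the underlying inequalities are identical.
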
%
\begin{proof} We first consider the membership of $a$ in $S(m,g)$.  We  want to prove that for all $k\in\mathbb{N} $ there exists $C_k>0$ such that for all $X, T_1,\cdots, T_k \in \Ran\times\Ran$:
\[|(a^{(k)}(X); T_1\otimes T_2\otimes\cdots\otimes T_k)|\leq C_{k}m(X)\prod\limits_{j=1}^kg_X^{\half}(T_j).\]
As it is customary these estimates are reduced to the ones for canonical directions which are equivalent to the following symbol inequalities (see Lemma \ref{class:smg}):
 for all $\alpha,\beta$, there exists $C_{\alpha\beta}>0$ such that
\[|\partial_x^{\beta}\partial_{\xi}^{\alpha}a(x,\xi)|\leq C_{\alpha\beta}m^{1-\frac{|\alpha|}{2}-\frac{|\beta|}{2}}(X)(\jp^2+|x|^2)^{\frac{|\beta|}{2}}.\]
We are also going to prove that $m\in S(m,g).$ 
We write $$T=(t,\tau)=(t_1,\dots, t_n,\tau_1,\dots, \tau_n)\in\Ran\times\Ran.$$ We observe that from the definition of the metric $g$ we have 
\[\frac{(\jp^2+|x|^2)}{m(X)}|t|^2\leq g_X(T);\hspace{0.4cm} \frac{|\tau|^2}{m(X)}\leq g_X(T).\]
Then%
\[\frac{|t_i|}{g_X^{\half}(T)}\leq \frac{m^{\half}(X)}{(\jp^2+|x|^2)^{\half}};\hspace{0.4cm} \frac{|\tau_j|}{g_X^{\half}(T)}\leq m^{\half}(X).\]%
For $k=1$ we will apply Proposition \ref{postfi2} to the partial derivatives of the symbol $a_2\in S^2(\Ran\times\Ran)$. Since  $\partial_{x_i}a(X)=\partial_{x_i}a_2(X)+2x_i$, we have
\begin{eqnarray*}
    |\partial_{x_i}a(X)|\frac{1}{m(X)}\frac{m^{\half}(X)}{(\jp^2+|x|^2)^{\half}} & \leq & C\frac{a_2^{\half}(X)\jp+|x_i|}{m^{\half}(X)(\jp^2+|x|^2)^{\half}}\\
    & \leq & C\frac{a_2^{\half}(X)\jp}{m^{\half}(X)\jp}+C\frac{|x_i|}{(1+|x|^2)^{\half}} \leq C.
\end{eqnarray*}

On the other hand, since $\partial_{\xi_j}a(X)=\partial_{\xi_j}a_2(X)$ we have%
\[|\partial_{\xi_j}a(X)|\frac{1}{m(X)}m^{\half}(X)\leq C\frac{a_2^{\half}(X)}{m^{\half}(X)}\leq C.\]%
If $k=|\alpha|+|\beta|\geq 2$, we will use the membership of $a_2$ to the class $S^2(\Ran\times\Ran)$. \\

We first observe that if $|\alpha|=0$,  then $|\beta|\geq 2$ and 
\[|\partial_x^{\beta}\partial_{\xi}^{\alpha}a(x,\xi)|=|\partial_x^{\beta}(a_2(x,\xi)+|x|^2)|\leq C_1\jp^2+C_2\leq C\jp^2.\]%

If $0<|\alpha|\leq 2$, we have 
\[|\partial_x^{\beta}\partial_{\xi}^{\alpha}a(x,\xi)|=|\partial_x^{\beta}\partial_{\xi}^{\alpha}a_2(x,\xi)|\leq C\jp^{2-|\alpha|}.\]%
Thus, for $0\leq|\alpha|\leq 2$, we obtain%
\begin{eqnarray*}
     |\partial_x^{\beta}\partial_{\xi}^{\alpha}a(x,\xi)| & \leq & C_{\alpha\beta}\jp^{2-|\alpha|}\\
    & \leq & C_{\alpha\beta} (\jp^2+|x|^2)^{1-\frac{|\alpha|}{2}}\\
   & =  & C_{\alpha\beta }\,\, m(X)(\jp^2+|x|^2)^{\frac{|\beta|}{2}}m^{-\frac{|\beta|}{2}-\frac{|\alpha|}{2}}(\jp^2+|x|^2)^{1-\frac{|\beta|}{2}-\frac{|\alpha|}{2}}m^{-1+\frac{|\beta|}{2}+\frac{|\alpha|}{2}}\\
  & \leq & C_{\alpha\beta }\,\, m(X)(\jp^2+|x|^2)^{\frac{|\beta|}{2}}m^{-\frac{|\beta|}{2}-\frac{|\alpha|}{2}},
\end{eqnarray*}

where in the last inequality we have used the fact that 
$\left(\frac{m(X)}{\jp^2+|x|^2}\right)^{\frac{|\alpha|+|\beta|-2}{2}}\leq C$, since $m(X)\leq C(\jp^2+|x|^2).$ This gives the right estimate for $0\leq|\alpha|\leq 2$.\\%

For the case $|\alpha|\geq 3$, we note that since $a_2(x,D)$ is a differential operator of order $2$ we have \[\partial_{\xi}^{\alpha}a(x,\xi)=\partial_{\xi}^{\alpha}a_2(x,\xi)=0.\]
Hence $\partial_x^{\beta}\partial_{\xi}^{\alpha}a(x,\xi)=0$, 
and  the proof is concluded.\\%

 In order to see that  $\jpX\in S(m,g)$, we observe that  $\jpX$ is a regular weight with respect to the metric $G_s$. Then
 \begin{equation}\label{shX12}\jpX\in S(\jpX, G_s)\subset S(\jpX, g)\subset S(m, g).\end{equation}%
 We have also used the fact that $\jpX$ is a $g-$weight.\\%
 
 Therefore $m=a+\jpX\in S(m,g)$.
 \end{proof}%

In the special  case  of $a_2(x,\xi)=\xi_{1}^{2}+\tilde{x_1}^{2}\xi_{2}^{2}$, consequently we have the following property.
 \begin{corollary}\label{thm.Hor.m}
The metric $g$ given in \eqref{gen.metric}  with  $a_2(x,\xi)=\xi_{1}^{2}+\tilde{x_1}^{2}\xi_{2}^{2}$,  is a H\"{o}rmander metric on $\Ra^2\times\Ra^2$,  $m(X)=\xi_{1}^{2}+\tilde{x_1}^{2}\xi_{2}^{2}+|x|^2+\jpxe$ is a $g$-weight and $\xi_{1}^{2}+\tilde{x_1}^{2}\xi_{2}^{2}+|x|^2,\,\,  m=\xi_{1}^{2}+\tilde{x_1}^{2}\xi_{2}^{2}+|x|^2+\jpxe\in S(m,g).$
\end{corollary}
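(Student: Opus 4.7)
The plan is to show that the particular symbol $a_2(x,\xi) = \xi_1^2 + \tilde{x}_1^2 \xi_2^2$ fits into the hypotheses of Theorem \ref{thmg1} and Lemma \ref{class}, and then to invoke those results directly. Concretely, I would verify three things in sequence: (i) non-negativity and membership $a_2\in S^2(\mathbb{R}^2\times\mathbb{R}^2)$, (ii) that $a_2(x,D)\in \mathrm{Diff}_+^2(\mathbb{R}^2)$, and (iii) application of the general statements.

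For step (i), non-negativity is immediate since $a_2(x,\xi)$ is a sum of two squares. For the Kohn--Nirenberg estimates $|\partial_x^\beta\partial_\xi^\alpha a_2(x,\xi)|\le C_{\alpha\beta}\langle\xi\rangle^{2-|\alpha|}$, I would note that the only $x$-dependent coefficient is $\tilde{x}_1^2$; by the defining relations \eqref{x}, $\tilde{x}_1$ coincides with $\mathrm{sgn}(x_1)\,x_1=|x_1|$ for $|x_1|\le 2$ and with a constant for $|x_1|\ge 4$, and smoothness in the transition region together with the constancy at infinity guarantees $\tilde{x}_1\in C_b^\infty(\mathbb{R})$, hence $\tilde{x}_1^2\in C_b^\infty(\mathbb{R})$. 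Since only $\partial_\xi^\alpha$ with $|\alpha|\le 2$ gives a nonzero contribution (the symbol is a polynomial of degree $2$ in $\xi$), the Kohn--Nirenberg estimates follow immediately.

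For step (ii), since $\tilde{x}_1^2\in C_b^\infty(\mathbb{R})\subset C_b^\infty(\mathbb{R}^2)$, the operator
\[
a_2(x,D) = -\partial_{x_1}^2 - \tilde{x}_1^2(x)\,\partial_{x_2}^2
\]
is a differential operator of order $2$ on $\mathbb{R}^2$ with $C_b^\infty(\mathbb{R}^2)$ coefficients and non-negative symbol, so $a_2(x,D)\in \mathrm{Diff}_+^2(\mathbb{R}^2)$.

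Having checked these hypotheses, step (iii) is a direct invocation: Theorem \ref{thmg1} applied with this $a_2$ yields that $g$ as defined by \eqref{gen.metric} is a H\"ormander metric on $\mathbb{R}^2\times\mathbb{R}^2$; the Corollary following Theorem \ref{thmg1} gives that $m(X) = \xi_1^2 + \tilde{x}_1^2\xi_2^2 + |x|^2 + \langle X\rangle$ is a $g$-weight; and Lemma \ref{class} gives $a, m\in S(m,g)$. The only non-routine point in this chain is the verification of $\tilde{x}_1\in C_b^\infty(\mathbb{R})$ from \eqref{x}, which is the only obstacle worth flagging, and it is mild since the definition of $\tilde{x}_1$ was precisely tailored so that it and all its derivatives remain uniformly bounded.
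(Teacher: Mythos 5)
Your proposal is correct and matches the paper's own (implicit) proof: the corollary is presented precisely as a specialization of Theorem~\ref{thmg1}, the ensuing corollary on $m$ being a $g$-weight, and Lemma~\ref{class} to the symbol $a_2(x,\xi)=\xi_1^2+\tilde{x}_1^2\xi_2^2$, once one checks non-negativity, $a_2\in S^2$, and $a_2(x,D)\in\mathrm{Diff}_+^2(\mathbb{R}^2)$. Your only flagged point, the regularity of $\tilde{x}_1$, is handled by the paper's standing assumption $\tilde{x}_i\in C^\infty(\mathbb{R})$ together with constancy at infinity (and in any case what the symbol estimates actually use is $\tilde{x}_1^2\in C_b^\infty$, which holds since $\tilde{x}_1^2=x_1^2$ near the origin).
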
%

\section{Some applications to well-posedness for degenerate Schr\"odinger and degenerate parabolic equations}\label{applicatione}
We are now going to establish  some implications in the  $L^2$ and Sobolev spaces $H(M,g)$ context. Indeed, the construction of the metric $g$ and  the class $S(m,g)$ adapted to our degenerate harmonic oscillators  has other  consequences, regarding the well-posedness for degenerate Schr\"odinger equations and  degenerate parabolic equations in the  $L^2$-setting.  We introduce the following appropriate class of potentials on $\Ran$.
\begin{definition} We will denote by  $\mathcal{P}_2(\Ran)$, the class of Borel functions $V:\Ran\rightarrow \Ra$ satisying the following conditions: 
\begin{eqnarray*}  
\mbox{(V1)}    & \mbox{There exist }C, C_1>0 \mbox{ such that }  |V(x)|\leq C|x|^2, \mbox{ for a.e. } |x|\geq C_1. \\
 \mbox{(V2)} & \mbox{There exists } C_2>0 \mbox{ such that } V(x)\geq -C_2, \mbox{ for a.e. } x\in\Ran .
\end{eqnarray*}
\end{definition}

It is worth to point out that no regularity assumption has  been imposed on $V$ as is customary in most of the  main theorems on the classical spectral theory of Schr\"odinger operators where the conditions are more likely to be on the integrability (\cite{BeShub}, \cite{HisSigal}).  We will consider Hamiltonians of the form $\mathcal{H}_V=a_2(x,D)+V$. 
\begin{theorem}\label{CatSchrodinger} Let $a_2(x,D)\in  \mbox{Diff }_+^2(\Ran)$ be  formally self-adjoint, $V\in \mathcal{P}_2(\Ran)$,  $g$  the H\"ormander metric and $m$ the $g$-weight associated to $a_2(x,D)$ as in \eqref{gen.metric}. Then, $-i\mathcal{H}_V$ is the infinitesimal generator of a $C_0-$group of unitary operators. Consequently, the following Cauchy problem for the corresponding degenerate Schr\"odinger equation is well-posed on $L^2$:
\begin{equation}\label{EQ:cpxaax11}
\left\{\begin{array}{rl}
i\partial_t u &=  \mathcal{H}_Vu, \\
u(0)&= f.
\end{array}
\right.
\end{equation}%
Furthermore, if $a_2(x,D)=-\sum\limits_{j=1}^k X_j^2$ is a sum of squares of real vector fields, then  $\mathcal{H}=-(a_2(x,D)+|x|^2)$ is the infinitesimal generator of a $C_0-$semigroup (degenerate harmonic semigroup) of contractions and the corresponding degenerate parabolic equation is well-posed on $L^2$: 
\begin{equation}\label{EQ:cpxaax11he}
\left\{\begin{array}{rl}
\partial_t u &=  \mathcal{H}u, \\
u(0)&= f.
\end{array}
\right.
\end{equation}%
\end{theorem}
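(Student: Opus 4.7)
The plan is to reduce the two well-posedness statements to Stone's theorem (for the Schr\"odinger part) and to the spectral theorem for non-negative self-adjoint operators (for the parabolic part), in each case through the construction of a suitable self-adjoint realisation of the Hamiltonian on $L^2(\Ran)$. The crucial input beyond the classical semigroup machinery will be the form-boundedness of $V$ by the positive part $a(x,\xi)=a_2(x,\xi)+|x|^2$ furnished by conditions (V1)--(V2), together with the membership $a\in S(m,g)$ proved in Lemma \ref{class}, which guarantees the correct interaction of the natural form domain with the Sobolev scale.

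For the Schr\"odinger equation \eqref{EQ:cpxaax11}, I would first observe that $\mathcal{H}_V$ is symmetric on $C_c^\infty(\Ran)$: the formally self-adjoint hypothesis on $a_2(x,D)$ takes care of the differential part, and $V$ is real-valued. The non-negativity $a_2(x,\xi)\geq 0$ together with formal self-adjointness yields $\langle a_2(x,D)u,u\rangle\geq -c_0\|u\|_{L^2}^2$ on $C_c^\infty$ (for a second order formally self-adjoint operator in $\mbox{Diff}_+^2(\Ran)$, integration by parts reduces the form of the principal part to a non-negative divergence term, and the bounded lower order coefficients contribute only a relatively bounded perturbation). Combined with (V2), this gives the uniform lower bound $\langle \mathcal{H}_V u,u\rangle\geq -C\|u\|_{L^2}^2$. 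Condition (V1) gives $|V(x)|\leq C|x|^2+C'$ globally, so $V$ is relatively form-bounded with respect to $|x|^2$, and hence with respect to $a_2(x,D)+|x|^2$, with relative bound zero. The KLMN theorem then produces a unique lower-semibounded closed quadratic form $q_V$ extending $u\mapsto \langle a_2(x,D)u,u\rangle+\int V|u|^2\,dx$ from $C_c^\infty$, and a unique self-adjoint operator representing it, still denoted $\mathcal{H}_V$. Stone's theorem applied to $-i\mathcal{H}_V$ then yields a strongly continuous group of unitary operators $(U(t))_{t\in\mathbb{R}}$ on $L^2(\Ran)$ and the mild solution $u(t)=U(t)f$ to \eqref{EQ:cpxaax11}.

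For the parabolic equation \eqref{EQ:cpxaax11he} with $a_2(x,D)=-\sum_{j=1}^k X_j^2$, integration by parts against the real vector fields $X_j$ (whose formal adjoints differ from $-X_j$ only by the bounded divergence terms) reduces the quadratic form of $\mathcal{H}_0:=a_2(x,D)+|x|^2$ on $C_c^\infty$ to $\sum_{j=1}^k\|X_j u\|_{L^2}^2+\||x|u\|_{L^2}^2$, up to a bounded symmetric perturbation that is absorbed into the lower bound. The Friedrichs extension of this closed, semibounded form is a self-adjoint, bounded below operator on $L^2(\Ran)$. By the functional calculus, $\mathcal{H}=-\mathcal{H}_0$ generates a strongly continuous semigroup $(e^{t\mathcal{H}})_{t\geq 0}$ with $\|e^{t\mathcal{H}}\|_{L^2\to L^2}\leq 1$ (after absorbing the bounded shift), furnishing well-posedness of \eqref{EQ:cpxaax11he}.

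The main obstacle is constructing the self-adjoint realisation of $\mathcal{H}_V$ in the presence of genuine degeneracy of $a_2(x,D)$ and merely Borel regularity of the potential $V$. Since we do not assume subellipticity, we cannot appeal to the isomorphism \eqref{hami1} to identify the domain with $H(m,g)$ a priori, and classical essential self-adjointness criteria on $C_c^\infty$ (Kato--Rellich, Faris--Lavine) are not directly applicable to such a rough $V$. The quadratic-form route (KLMN/Friedrichs) circumvents both difficulties and requires only the form-boundedness ensured by (V1)--(V2); the $S(m,g)$-calculus of Section \ref{Metric} is then what guarantees, via Theorem \ref{thm.cont}, that the abstract $L^2$ conclusion can be transferred to the Sobolev space $H(M,g)$ mentioned in the introduction.
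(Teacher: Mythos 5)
Your overall strategy — a self-adjoint realisation of $\mathcal{H}_V$ by form methods, then Stone's theorem and semigroup generation — matches the paper's. For the lower bound on the form of $a_2(x,D)$ you propose an integration-by-parts / divergence-form argument in place of the paper's Fefferman–Phong inequality. For a formally self-adjoint operator in $\mbox{Diff}_+^2(\Ran)$ with real $C_b^\infty$ coefficients this is a valid and more elementary route (formal self-adjointness forces the first-order terms into the divergence of the principal part, leaving only a bounded zeroth-order remainder), though Fefferman–Phong is what survives when $a_2$ is merely a non-negative $S^2$ symbol rather than a differential operator, and it is the tool the paper uses systematically.

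The genuine gap is the assertion that $V$ is relatively form-bounded with respect to $|x|^2$ (and hence to $a_2(x,D)+|x|^2$) \emph{with relative bound zero}. Conditions (V1)–(V2) only give $|V(x)|\leq C(1+|x|^2)$ for a fixed constant $C$, which yields form bound $C$, not zero: for instance if $V(x)=C|x|^2$ for large $|x|$, testing on states concentrated far from the origin shows the constant in front of $\int|x|^2|u|^2$ cannot be driven below $C$. If $C\geq 1$, KLMN in the form you invoke does not apply, so the proposed construction of the self-adjoint operator breaks down. What repairs this — and what the paper actually does — is to observe that (V2) together with the Fefferman–Phong (or divergence-form) lower bound for $a_2(x,D)$ already gives $\langle\mathcal{H}_V u,u\rangle\geq -\widetilde{C}\|u\|^2$ directly, so the symmetric operator $\mathcal{H}_V$ is semibounded on its own and the Friedrichs extension applies with no relative-bound hypothesis. (An equivalent fix: split $V=V_+-V_-$ with $V_-$ bounded by $C_2$; add the non-negative form of $V_+$ to the Friedrichs form of $a_2(x,D)+|x|^2$ and subtract the bounded $V_-$.) Separately, the paper also identifies the domain with $H(m,g)$ by showing $V:H(m,g)\to L^2$ is bounded via $|x|^2\in S(m,g)$ and Theorem \ref{thm.cont}; your argument does not establish this domain identification, which is not needed for generation on $L^2$ but is what the Sobolev-scale statements rest on.
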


\begin{proof} We first state the boundedness of the potential operator. Indeed, we first note that since $|x|^2\in S(m,g)$, we have that $|x|^2:H(m,g)\rightarrow L^{2}(\Ran)$ is bounded. From (V1) and (V2), we have $|V(x)|\leq C(1+|x|^2)$, for a.e. $x\in\Ran$. Since $m\geq 1$, it is now clear that  \begin{equation}\label{Vbded1}\lvert|V(x)u\rvert|_{L^2}\leq C\lvert|u\rvert|_{H(m,g)},\end{equation}
for all $u\in H(m,g)$.\\%

On the other hand, the operator $a_2(x,D)+V(x)$ is symmetric, Lemma \ref{class}, Theorem \ref{thm.cont} and the inequality \eqref{Vbded1} imply that the operator $a_2(x,D)+V(x):H(m,g)\rightarrow L^{2}(\Ran)$ is bounded. Moreover, since $0\leq a_2(x,\xi)\in S^2(\Ran\times\Ran)$, applying Fefferman-Phong theorem
 there exists $C>0$ such that 
\[\mbox{Re}\langle a_2(x,D)v,v \rangle\geq -\frac{C}{2}\lvert | v\rvert |_{L^2(\Ran)}^2. \]%
Hence%
\[\mbox{Re}\langle (a_2(x,D)+C)v,v \rangle\geq \frac{C}{2}\lvert | v\rvert |_{L^2(\Ran)}^2.\]%
Thus, by using (V2) there exists $\widetilde{C}>0$ such that  %
\[\mbox{Re}\langle (a_2(x,D)+V(x)+\widetilde{C})v,v \rangle\geq \frac{C}{2}\lvert | v\rvert |_{L^2(\Ran)}^2,\]%
and by the Cauchy-Schwarz inequality  we have
\begin{equation}\label{aprioriw34}
    \lvert |(a_2(x,D)+V(x)+\widetilde{C})v\rvert |_{L^2(\Ran)}\geq C \lvert |v\rvert |_{L^2(\Ran)}.
\end{equation}%
Therefore, Friedrichs extension theorem ensures that $\mathcal{H}_{V}+\widetilde{C}:H(m,g)\rightarrow L^{2}(\Ran)$ is self-adjoint. Thus, $\mathcal{H}_{V} $ is also self-adjoint. Now, an application of Stone's theorem guarantees that  $-i\mathcal{H}_V$ is the infinitesimal generator of a $C_0-$group of unitary operators. Therefore, the  Cauchy problem for the corresponding degenerate Schr\"odinger equation \eqref{EQ:cpxaax11} is well-posed in $L^2$.\\

On the other hand since $\mathcal{H}=-(a_2(x,D)+|x|^2)$ is self-adjoint, $-\mathcal{H}+C$ is semibounded for some $C>0$ as above with $V=|x|^2$, and $-a_2(x,D)$ being a sum of squares of real vector fields,  $\mathcal{H}$ is dissipative. Then, the Lummer-Phillips theorem ensures that $\mathcal{H}$ is the infinitesimal generator of a $C_0-$semigroup of contractions and hence the corresponding degenerate parabolic equation \eqref{EQ:cpxaax11he} is well-posed. 
 \end{proof}
\begin{remark} We recall that the simultaneous study of the Hamiltonian as an infinitesimal generator for a Schr\"odinger semigroup as  
for  Heat semigroup has been an active field of research in the last decades as it  was well pointed out by B. Simon in \cite{BSimon1}.  For instance, when looking at the $L^p$ properties of the eigenfunctions of $\mathcal{H}=-\Delta+ V(x)$, if $\mathcal{H}\psi=E\psi$ and $\psi\in L^2$, one observes that since $e^{-t\mathcal{H}}\psi=e^{-tE}\psi$, then $\psi\in e^{-t\mathcal{H}}(L^2)$. Thus, a mapping property such as $e^{-t\mathcal{H}}:L^2\rightarrow L^{\infty}$ can be used to deduce  that $\psi$ belongs also to $ L^{\infty}$. The works of B. Simon and R. Carmona  on Schr\"odinger semigroups have deeply influenced the research on these semigroups,  their  applications are diverse and the literature is currently huge.
See for instance \cite{albev:hs}, \cite{Carmona1}, \cite{BSimon2},  \cite{BSimon3}, \cite{iou:a1s}, \cite{ishik:a1},  for more accounts on this matter. Thus, the Schr\"odinger semigroups are relevant for the study of $L^p$ properties of eigenfunctions of Schr\"odinger operators and especially for the so-called {\it ground states}.
\end{remark}

As a corollary of our previous $L^2$ estimates, we can extend  the well-posedness  for the positive powers of $\mathcal{H}$, in the case  where $$a_2(x,D)=-\sum\limits_{j=1}^k X_j^2$$ is a sum of squares of real vector fields, where the family of vector fields satisfies the H\"ormander condition of order $2$. A reason for considering a sub-Laplacian associated  to a system of vector fields satisfying  the H\"ormander condition of order 2 is that in this case the geometry of the balls associated to the Carn\'ot-Carath\'eodory distance of the H\"ormander system is very well understood from the fundamental work of Nagel, Stein and Wainger \cite{NagelSteinWainger}, see Remark \ref{Subelliptic:remark}. 

On the other hand, thanks to the Fefferman-Phong inequality, we can define the positive powers of $\mathcal{H}^{\beta}$, and we have the membership $\mathcal{H}^{\beta}\in OpS(m^{\beta},g)$. Moreover, the maximal hypoellipticity of $a_2(x,D)$ guarantees that the Sobolev space $ H(m^{\beta},g)$ is characterised by the condition $(a_2(x,D)+|x|^2+C)^{\beta}u\in L^2$. Hence,  $(a_2(x,D)+|x|^2+C)^{\beta}$ is semi-bounded from below 
 and consequently we obtain the following corollary.
\begin{corollary}\label{CatSchrodinger2} Let $\beta>0.$ Let $$a_2(x,D)=-\sum\limits_{j=1}^k X_j^2$$ be  a sum of squares of real vector fields, where $X_1,\dots, X_k$ satisfy the H\"ormander condition of order $2$, $g$ the H\"ormander metric and $m$ the $g$-weight associated to $a_2(x,D)$ as in \eqref{gen.metric}. Then, there exists $C>0$ such that  $\mathcal{H}_{C}=a_2(x,D)+|x|^2+C$ is semibounded in $L^2(\Ran)$, and $-i\mathcal{H}_C$ is the infinitesimal generator of a $C_0-$semigroup. Consequently, the following Cauchy problem for the corresponding degenerate Schr\"odinger equation is well-posed on $L^2$:
\begin{equation}\label{EQ:cpxddaax11}
\left\{\begin{array}{rl}
i\partial_t u +  \mathcal{H}_0^{\beta}u & =0, \\
u(0)&= f.
\end{array}
\right.
\end{equation}%
Furthermore, the operator $$\mathcal{H}=-(a_2(x,D)+|x|^2),$$ is the infinitesimal generator of a $C_0-$semigroup (degenerate harmonic semigroup) of contractions and the corresponding degenerate parabolic equation is well-posed on $L^2$: 
\begin{equation}\label{EQ:cpxaaxdd11he}
\left\{\begin{array}{rl}
\partial_t u -  \mathcal{H}^{\beta}u & =0, \\
u(0)&= f.
\end{array}
\right.
\end{equation}%
\end{corollary}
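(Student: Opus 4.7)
The plan is to bootstrap from Theorem 4.2 by combining the Weyl--H\"ormander functional calculus with the maximal hypoellipticity of H\"ormander sums of squares, thereby extending the semigroup/group construction to arbitrary positive real powers $\beta>0$.

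First I would invoke Theorem \ref{CatSchrodinger} applied to the particular potential $V(x)=|x|^2\in\mathcal{P}_2(\Ran)$, yielding that $\mathcal{H}_{0}:=a_{2}(x,D)+|x|^2$ is self-adjoint on the domain $H(m,g)$, and by the Fefferman--Phong inequality employed inside the proof of that theorem (see \eqref{aprioriw34}) there exists $C>0$ so that $\mathcal{H}_C=\mathcal{H}_0+C$ is strictly positive: $\mathcal{H}_C\geq \tfrac{C}{2}I$ in the quadratic form sense. With this coercivity at hand, the spectral theorem for self-adjoint operators defines $\mathcal{H}_C^{\beta}$ unambiguously for every $\beta>0$, and $\mathcal{H}_C^{\beta}$ is itself positive and self-adjoint.

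Next I would identify $\mathcal{H}_C^{\beta}$ with a Weyl--H\"ormander operator in $\mathrm{Op}\,S(m^{\beta},g)$. Since by Lemma \ref{class} both $a$ and $m$ lie in $S(m,g)$ with $m\geq 1$, the positivity and ellipticity of $m$ with respect to the calculus allow the construction of $\mathcal{H}_C^{\beta}$ via the standard holomorphic/Mellin functional calculus in the $S(m,g)$-class (as developed in Bony--Chemin and H\"ormander \cite{HormanderBook34}), producing a symbol in $S(m^{\beta},g)$ whose Weyl quantization coincides with $\mathcal{H}_C^{\beta}$. Theorem \ref{thm.cont} then furnishes the boundedness
\[
\mathcal{H}_C^{\beta}:H(m^{\beta},g)\longrightarrow L^{2}(\Ran).
\]
The H\"ormander condition of order $2$ on the vector fields $X_1,\dots,X_k$ enters precisely here to guarantee the converse maximal hypoelliptic inequality, namely that $u\in L^2$ together with $\mathcal{H}_C^{\beta}u\in L^2$ forces $u\in H(m^{\beta},g)$, so that $H(m^{\beta},g)$ is the maximal domain of $\mathcal{H}_C^{\beta}$ and semiboundedness of $\mathcal{H}_C^{\beta}$ from below in $L^2$ follows.

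Finally, with $\mathcal{H}_C^{\beta}$ self-adjoint, Stone's theorem directly implies that $-i\mathcal{H}_{C}^{\beta}$ generates a $C_0$-group of unitaries on $L^2(\Ran)$, proving the well-posedness of \eqref{EQ:cpxddaax11}; modulo the harmless constant shift $e^{-itC^{\beta}}$ this yields the statement for $\mathcal{H}_0^{\beta}$. For the parabolic part, since $-\mathcal{H}^{\beta}=-(a_2(x,D)+|x|^2)^{\beta}$ is self-adjoint and bounded above by a constant times the identity (being positive after the shift), it is dissipative on $H(m^{\beta},g)$; the Lumer--Phillips theorem then yields a $C_0$-semigroup of contractions and the well-posedness of \eqref{EQ:cpxaaxdd11he}. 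The main obstacle I anticipate is the symbolic construction of the fractional power $\mathcal{H}_C^{\beta}$ inside $S(m^{\beta},g)$ together with the use of the H\"ormander condition of order $2$ to match the analytic domain of $\mathcal{H}_C^{\beta}$ with the symbolic Sobolev space $H(m^{\beta},g)$; everything else is a direct translation of the $\beta=1$ argument from Theorem \ref{CatSchrodinger}.
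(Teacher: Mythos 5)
Your proposal follows the paper's own route essentially step by step: Fefferman--Phong gives the coercive shift, the fractional power $\mathcal{H}_C^{\beta}$ is realized in $\mathrm{Op}\,S(m^{\beta},g)$ via the functional calculus, maximal hypoellipticity of the sum of squares (the Hörmander condition of order $2$) is used to identify the analytic domain of $\mathcal{H}_C^{\beta}$ with $H(m^{\beta},g)$, and then Stone resp.\ Lumer--Phillips close the argument. This matches the paper's proof sketch for this corollary, so the proposal is correct and takes essentially the same approach.
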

 

 In order to go further with our analysis we need the property of  {\it geodesic temperateness} for our metric $g$. This property allows us to obtain the invertibilitiy of the Hamiltonian as a pseudodifferential operator. This important notion was introduced by Bony and Chemin in \cite{BC94}, originally calling it {\it strong temperateness}. We start by recalling its definition, see \cite{le:book}, \cite{Bonygtemp}.\\
 
  Let $d^{\sigma}(X,Y)$ be the geodesic distance for the Riemannian  metric $g^{\sigma}$ between $X$ and $Y$, where $g^{\sigma}$ is given by \eqref{gsigma}.
  \begin{definition} A H\"ormander metric $g$ is called {\it geodesically temperate} if it is temperate and if there exist $C, N>0$ such that
  \begin{equation}\label{geode} \frac{g_Y}{g_X}\leq C(1+d^{\sigma}(X,Y))^N.
        \end{equation}
  It is known (c.f. \cite{Bonygtemp}) that the condition \eqref{geode}  is equivalent to : there exist $C, N>0$ such that
\begin{equation}\label{geode2} C^{-1}(1+d^{\sigma}(X,Y))^{\frac{1}{N}}\leq (1+g^{\sigma}(X-Y))\leq C(1+d^{\sigma}(X,Y))^{N}.
        \end{equation}
 \end{definition}
  Since our H\"ormander metric is symmetrically split, it follows that it is geodesically temperate by a result of Bony (Cf. Theorem 5 (i), \cite{Bonygtemp}). 
 In what follows we require the following definition.  We record that a second order differential operator $P(x,D)$ is subelliptic of order $0<\tau\leq 2,$ if it satisfies the {\it a-priori estimate} \begin{equation}\label{sub}
    \lVert v\rVert_{H^{\tau}}\leq C_1(\lVert P(x,D)v\rVert_{L^{2}}+\lVert v\rVert_{L^{2}}),\,\,\,\forall v \in C_{0}^{\infty}(\Ran),
\end{equation}where $H^{\tau}$ denotes the standard Sobolev space of order $\tau$ on $\mathbb{R}^n.$
 \begin{corollary}  Let $a_2(x,D)\in  \mbox{Diff }_+^2(\Ran)$ be  formally self-adjoint and subelliptic with order of subellipticity $0<\tau \leq 2$, $g$  the H\"ormander metric and $m$ the $g$-weight associated to $a_2(x,D)$ as in \eqref{gen.metric}. Then, there exist $C>0$ such that the  operator  
\[\mathcal{H}_{C}=a_2(x,D)+|x|^2+C:H(m,g)\rightarrow L^2(\Ran),\]
    is an isomorphism with inverse a pseudodifferential operator
 \[\mathcal{H}_{C}^{-1}:L^2(\Ran)\rightarrow H(m,g),\]
 with symbol in $S(m^{-1},g)$.
\end{corollary}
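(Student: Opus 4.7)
The plan is to combine three ingredients: the self-adjointness established in Theorem \ref{CatSchrodinger}, a coercive a-priori estimate enabled by subellipticity together with Fefferman--Phong, and the Bony--Chemin parametrix construction for the Weyl--H\"ormander calculus in the geodesically temperate setting. First, by Lemma \ref{class} one has $a_2+|x|^2\in S(m,g)$, and since constants lie in every $S(M,g)$ class with $M\geq 1$, the symbol of $\mathcal{H}_C$ belongs to $S(m,g)$; consequently Theorem \ref{thm.cont} gives that $\mathcal{H}_C:H(m,g)\to L^2(\mathbb{R}^n)$ is bounded. Moreover, with $V(x)=|x|^2+C\in\mathcal{P}_2(\mathbb{R}^n)$, the Fefferman--Phong plus Friedrichs-extension argument reproduced in the proof of Theorem \ref{CatSchrodinger} shows that $\mathcal{H}_C$ is self-adjoint with domain $H(m,g)$ once $C$ is large enough.

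Next, I would upgrade the $L^2$-coercivity \eqref{aprioriw34} to the $H(m,g)$-coercivity
\[
    \|\mathcal{H}_C v\|_{L^2}\geq c\,\|v\|_{H(m,g)},\qquad v\in C_0^\infty(\mathbb{R}^n).
\]
The point is that the non-negativity of $a_2$, the subelliptic estimate \eqref{sub}, and the confining term $|x|^2$ together yield the symbolic lower bound
\[
    a_2(x,\xi)+|x|^2+C \asymp m(x,\xi)\qquad (C\gg 1)
\]
within the $S(m,g)$ calculus, from which a sharp G\aa rding inequality (valid in the Weyl--H\"ormander framework) delivers the operator-level coercivity above. Combined with self-adjointness, the estimate forces $\mathcal{H}_C$ to be injective with closed range, while the identity $\overline{\mathrm{Ran}(\mathcal{H}_C)}=\ker(\mathcal{H}_C^{*})^{\perp}=L^2(\mathbb{R}^n)$ gives surjectivity; hence $\mathcal{H}_C$ is a Banach space isomorphism $H(m,g)\to L^2(\mathbb{R}^n)$ by the open mapping theorem.

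Finally, to identify the inverse as an element of $\mathrm{Op}\,S(m^{-1},g)$, I would invoke the Bony--Chemin inversion theorem. Since the metric $g$ in \eqref{gen.metric} is symmetrically split in $(dx,d\xi)$, it is geodesically temperate by Bony's theorem (Theorem 5(i) of \cite{Bonygtemp}) as quoted above. The symbolic ellipticity $a_2+|x|^2+C\asymp m$ then allows the classical construction of a formal symbol $q\sim q_0+q_1+\cdots$ with $q_0=1/(a_2+|x|^2+C)\in S(m^{-1},g)$, which Borel-summed produces a true symbol $q\in S(m^{-1},g)$ whose Weyl quantisation $Q=q^w(x,D)$ satisfies $Q\mathcal{H}_C=I-R$ with $R$ regularising in the calculus. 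Writing $\mathcal{H}_C^{-1}=Q+R\mathcal{H}_C^{-1}$ and observing that the remainder term lies in $\mathrm{Op}\,S(m^{-N},g)\subset\mathrm{Op}\,S(m^{-1},g)$ for all $N$ then yields $\mathcal{H}_C^{-1}\in\mathrm{Op}\,S(m^{-1},g)$. The main obstacle will be the symbolic ellipticity $a_2+|x|^2+C\asymp m$: the non-negativity of $a_2$ alone is insufficient to control the $\langle X\rangle$ component of $m$, and this is precisely where the subellipticity hypothesis $0<\tau\leq 2$ is needed; verifying that \eqref{sub} translates into the required pointwise lower bound on the symbol (modulo elements of $S(m^{1-\epsilon},g)$ reabsorbable by the Weyl--H\"ormander remainder calculus) is the technical heart of the proof.
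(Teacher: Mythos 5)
The proposal contains a genuine gap that invalidates its central step. You assert a pointwise symbolic equivalence
\[
    a_2(x,\xi)+|x|^2+C \asymp m(x,\xi)\qquad (C\gg 1),
\]
and base both the coercivity argument (via sharp G\aa rding) and the parametrix construction (via $q_0=1/(a_2+|x|^2+C)\in S(m^{-1},g)$) on it. But this equivalence is simply false in the degenerate setting, which is the whole point of the paper. Recall that $m(x,\xi)=a_2(x,\xi)+|x|^2+\jpxe$. At characteristic points, $a_2$ vanishes while $\jpxe\sim|\xi|$ grows: for instance, with $a_2(x,\xi)=\xi_1^2+\tilde{x}_1^2\xi_2^2$ on $\mathbb{R}^2$, take $x=0$, $\xi_1=0$, $|\xi_2|\to\infty$, so $a_2+|x|^2+C=C$ stays bounded while $m\sim|\xi_2|\to\infty$. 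Hence $a_2+|x|^2+C$ is \emph{not} an elliptic symbol in $S(m,g)$, and $1/(a_2+|x|^2+C)$ does \emph{not} belong to $S(m^{-1},g)$. Your closing remark that the operator-level subelliptic estimate \eqref{sub} would "translate into the required pointwise lower bound on the symbol (modulo $S(m^{1-\epsilon},g)$)" is not correct: subellipticity is a genuinely microlocal/commutator phenomenon at the operator level and cannot be converted into a pointwise lower bound on the principal symbol (if it could, the operator would be elliptic, not merely subelliptic).

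Because of this, neither your derivation of the $H(m,g)$-coercivity via sharp G\aa rding nor your parametrix construction can go through. The paper proceeds quite differently on both counts. For the coercivity, it combines the Fefferman--Phong a priori bound \eqref{aprioriw34} with the \emph{operator-level} subelliptic estimate \eqref{sub} to obtain $\|u\|_{H^\tau}\lesssim\|\mathcal{H}_C u\|_{L^2}$, and then upgrades this to $\|u\|_{H(m,g)}\lesssim\|\mathcal{H}_C u\|_{L^2}$ via a phase-space zone decomposition into an elliptic zone and a subelliptic zone adapted to $g$ (the argument of Lemma 3.3 in \cite{subedel1}); this decomposition is precisely what substitutes for the missing pointwise ellipticity. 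For the statement about $\mathcal{H}_C^{-1}\in\mathrm{Op}\,S(m^{-1},g)$, the paper does not construct a parametrix at all: once one knows that $\mathcal{H}_C\in\mathrm{Op}\,S(m,g)$ is a Hilbert-space isomorphism $H(m,g)\to L^2(\Ran)$ and that the metric is geodesically temperate (which you did correctly identify via Bony's Theorem 5(i)), it invokes Corollary 7.7 of \cite{BC94}, the Bony--Chemin inverse-closedness (spectral invariance) theorem for the Weyl--H\"ormander calculus, which asserts directly that the inverse of such an isomorphism is again a pseudodifferential operator with symbol in $S(m^{-1},g)$. The takeaway is that the Bony--Chemin Wiener-type property is essential precisely because the usual parametrix route breaks down in the non-elliptic regime.
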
 
\begin{proof} The {\it a priori} estimate \eqref{aprioriw34} and the self-adjointness of $\mathcal{H}_{C}$ gives the surjectivity. On the other hand, the subellipticity of $a_2(x,D)$ gives
\begin{equation}\label{sub}
    \lVert v\rVert_{H^{\tau}}\leq C_1(\lVert(a_2(x,D)+|x|^2)v\rVert_{L^{2}}+\lVert v\rVert_{L^{2}}),\,\,\,\forall v \in C_{0}^{\infty}(\Ran).
\end{equation}

By the  {\it a priori} estimate \eqref{aprioriw34} again and \eqref{sub}, there exists a constant $C_2>0$ such that
\begin{equation} \label{sub2}\lVert u\rVert_{H^{\tau}}\leq C_2\lVert \mathcal{H}_{C}u\rVert_{L^{2}},\,\,\,\forall u\in C_{0}^{\infty}(\Ran).\end{equation}
Now, an analogous argument to the one for the proof of Lemma 3.3 in \cite{subedel1}, splitting the phase-space within an elliptic and a subelliptic zones with respect to $g$ give us 
\[\Vert u\Vert_{H(m,g)}\leq C_2\Vert \mathcal{H}_{C}u\Vert_{L^{2}},\,\,\,\forall u\in C_{0}^{\infty}(\Ran).\]
Hence $\mathcal{H}_{C}:H(m,g)\rightarrow L^2(\Ran)$ is injective. Therefore, an application of Corollary 7.7 in \cite{BC94} concludes the proof.
\end{proof}

\section{$L^p$-bounds, $H^1$-$L^1$ and $L^{\infty}$-$BMO$ estimates for degenerate operators}\label{boundednnesssection}%

In this section we consider our analysis of pseudo-differential operators associated to the degenerate class of operators $S(m,g)$ where $g$ is the metric in \eqref{gen.metric}.
\subsection{$L^\infty$-$BMO$ and $H^1$-$L^1$ boundedness}
The next lemma will be essential for us in order to estimate an optimal fractional power of the weight $m(x,\xi)$, which determines a suitable class for the $L^p$-boundedness. The $L^{\infty}$-$BMO$ boundedness will be reached by analysing a partition of unity of the symbol of interest. The estimates for each piece is obtained from the next lemma which is an extension of a classical Hardy-Littlewood-Sobolev inequality. 
\begin{lemma}\label{lem:l} Let $\varepsilon_0=\frac{3}{4}$ and  $\varepsilon_0\leq\varepsilon<1,$ and let $q\in S(m^{-\frac{n}{2}\varepsilon },g)$ be supported in 
    $R\leq a(x,\xi)+\jpxe\leq 3R $ for $R>1 $. Then, for all $l>n/4 $ we have 
\[\lvert | q(x,D)f \rvert|_{\infty}\leq C \lvert |q\rvert |_{l; S(m^{-\frac{n \varepsilon}{2} },g)}  \lvert |f\rvert |_{L^\infty}, \] 
where the constant $C$ is independent of $q$ and $f$.
\end{lemma}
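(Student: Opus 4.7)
My plan is to reduce the $L^{\infty}$-bound to a Schur-type estimate on the Schwartz kernel $K(x,y)=\int e^{2\pi i(x-y)\xi}q(x,\xi)\,d\xi$ of $q(x,D)$. Since $\|q(x,D)f\|_{L^{\infty}}\leq\|f\|_{L^{\infty}}\sup_{x}\int |K(x,y)|\,dy$, it suffices to bound the last integral uniformly in $R>1$ by the seminorm $\|q\|_{l;S(m^{-n\varepsilon/2},g)}$.

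The key step is Cauchy--Schwarz with the weight $(1+|x-y|^{2})^{2l}$:
\[
\int|K(x,y)|\,dy\leq\Bigl(\int|K(x,y)|^{2}(1+|x-y|^{2})^{2l}\,dy\Bigr)^{1/2}\Bigl(\int(1+|z|^{2})^{-2l}\,dz\Bigr)^{1/2}.
\]
The second factor is finite precisely because the hypothesis $l>n/4$ ensures $4l>n$. For the first factor, Plancherel's theorem applied in the variable $z=x-y$ converts multiplication by $(1+|z|^{2})^{l}$ into the differential operator $(1-\Delta_{\xi}/(4\pi^{2}))^{l}$ acting on $q(x,\cdot)$, giving
\[
\int|K(x,y)|^{2}(1+|x-y|^{2})^{2l}\,dy=\bigl\|(1-\Delta_{\xi}/(4\pi^{2}))^{l}q(x,\cdot)\bigr\|_{L^{2}(d\xi)}^{2}.
\]

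Now I invoke Lemma \ref{class:smg} applied to the weight $m^{-n\varepsilon/2}$ to get the pointwise bound $|\partial_{\xi}^{\alpha}q(x,\xi)|\lesssim\|q\|_{|\alpha|;S(m^{-n\varepsilon/2},g)}m(x,\xi)^{-n\varepsilon/2-|\alpha|/2}$ for $|\alpha|\leq 2l$. Since $m(x,\xi)\sim R$ on the support, each term in $(1-\Delta_{\xi}/(4\pi^{2}))^{l}q$ is of size at most $R^{-n\varepsilon/2}$ pointwise, uniformly controlled by the seminorms $\|q\|_{k;S(m^{-n\varepsilon/2},g)}$, $k\leq 2l$. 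Integrating the square over the $\xi$-slice of the support $\{\xi:R\leq m(x,\xi)\leq 3R\}$, whose Euclidean volume is controlled by the constraint $|\xi|\lesssim R$ forced by $\langle X\rangle\leq 3R$, converts this into a bound of the form $R^{\text{power}(\varepsilon,n)}\|q\|_{l;S(m^{-n\varepsilon/2},g)}$ for the first Cauchy--Schwarz factor.

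The main technical obstacle is to balance this volume of the shell against the gain coming from the symbol estimate so as to obtain an estimate that is \emph{uniform} in $R$. This is where the threshold $\varepsilon_{0}=3/4$ enters: rather than the crude Euclidean diameter, one must exploit the $g$-adapted geometry of the shell $\{R\leq m(x,\xi)\leq 3R\}$, decomposing it into $g$-balls on which the support of $q$ in $\xi$ has the Planck-cell scale $m^{1/2}\sim R^{1/2}$, so that the effective volume enters only through intermediate powers of $R$. The resulting sharper volume estimate makes the two factors combine to a bounded constant precisely when $\varepsilon\geq 3/4$, thereby producing the claimed $R$-independent bound in terms of $\|q\|_{l;S(m^{-n\varepsilon/2},g)}$.
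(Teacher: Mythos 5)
Your reduction to $\sup_x\lVert\hat q_x\rVert_{L^1}$ is the same as in the paper, but the single Cauchy--Schwarz step with the weight $(1+|x-y|^2)^{2l}$ is not the right tool here, and the argument as outlined does not close. After Plancherel your first factor is bounded by $\lVert(1-\Delta_\xi/(4\pi^2))^l q(x,\cdot)\rVert_{L^2(d\xi)}$, and the dominant contribution (the worst $R$-scaling) comes from the \emph{zeroth}-order term $q(x,\cdot)$ itself: since $|q|\lesssim m^{-n\varepsilon/2}\sim R^{-n\varepsilon/2}$ on the shell and the $\xi$-slice of $\{R\leq m\leq 3R\}$ can have Euclidean volume of order $R^n$ when $|x|$ is small (only the constraint $\langle\xi\rangle\lesssim R$ survives in the degenerate regime), the $L^2$ norm of that term is of order $R^{n(1-\varepsilon)/2}$, which is unbounded as $R\to\infty$ for every $\varepsilon<1$. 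Higher derivatives only make the $L^2$ norm smaller (each gains $m^{-1/2}$), so they cannot rescue the estimate. Thus your single Cauchy--Schwarz can never give an $R$-uniform bound for $\varepsilon<1$, and your last paragraph acknowledges an ``obstacle'' without actually overcoming it.

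The device the paper uses, which your proposal misses, is the Fefferman-type split of the $y$-integral at a radius $b$ that \emph{depends on} $R$, namely $b=R^{\varepsilon-1}$. On $|y|<b$ one applies Cauchy--Schwarz with no weight at all, producing the small factor $b^{n/2}$ that exactly cancels the growth $\lVert q(x,\cdot)\rVert_{L^2}\sim R^{n(1-\varepsilon)/2}$. On $|y|\geq b$ one applies Cauchy--Schwarz with the pure power weight $|y|^{2k}$, which after Plancherel brings in $\nabla_\xi^k q$, and here the estimate is refined not by the crude (volume)$\times$(sup) bound but by integrating $\langle\xi\rangle^{-n\varepsilon-k}$ over $\langle\xi\rangle\gtrsim R^{1/2}$ (in the case $|x|^2\leq R/2C$), yielding an extra decay $R^{(n-n\varepsilon-k)/4}$. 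Collecting exponents gives $R^{\frac{n}{4}(\varepsilon-1)+k(\frac{3}{4}-\varepsilon)}$, and this is where the threshold $\varepsilon_0=3/4$ actually comes from: both exponents are $\leq 0$ precisely when $\varepsilon\geq 3/4$. The case $|x|^2>R/2C$ is handled separately by trading the $\jpx^{-k}\lesssim R^{-k/2}$ decay of the symbol for the lost $\xi$-localisation. None of this structure is present in the single-$(1+|z|^2)^{2l}$ Cauchy--Schwarz, and the vague suggestion to ``decompose into $g$-balls of Planck-cell scale'' does not substitute for it; the decisive ingredient is the $R$-dependent dyadic split of $y$-space coupled to the intrinsic $\xi$-localisation of the shell.
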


\begin{proof}  Let $q\in S(m^{-\frac{n}{2}\varepsilon },g)$ be supported in 
 \[\{(x,\xi)\in \Ran\times\Ran \, : R\leq a(x,\xi)+\jpxe\leq 3R \}, \]
 for 
    $R>1 $ fixed. We observe that
\begin{align*}
 q(x,D)f(x)=&(2\pi)^{-n}\int\int
  e^{i(x-y)\cdot\xi}q\left(x,\xi\right)f(y)dyd\xi\\
=&\int\hat{q}_x(y-x)f(y)dy\\
=&\hat{q}_x*f(x),
\end{align*}
where $\hat{q}_x$ is the Fourier transform
$q_x(\xi)=q(x,\xi)$ with respect to the variable $\xi$.
 We obtain
\[|q(x,D)f(x)|\leq
    \vert|\hat{q}_x \rvert|_{L^1}  \vert|f\rvert|_{L^\infty},\,\,x\in\Ran. \]
 It is then enough to prove that for all $x\in\Ran$ and $ l>n/4 $
\[ \vert|\hat{q}_x \rvert|_{L^1}\leq C \lvert|q\rvert|_{l; S(m^{-\frac{n}{2}\varepsilon },g)}.\]

\indent Let us consider  $b>0$ to be fixed below.  By applying the
 Cauchy-Schwarz inequality, we obtain
\begin{eqnarray*}
\int\limits_{|y|<b}|\hat{q}_x(y)|dy & \leq & Cb^{\frn}\left(\,\int\limits_{|y|<b}|\hat{q}_x(y)|^{2}dy\right)^{\half}\\
& \leq & Cb^{\frn}\left(\,\int\limits_{\Ran}|q(x,\xi)|^{2}d\xi\right)^{\half}\\
& \leq & C\vert|q\rvert|_{l;S(m^{-\frac{n}{2}\varepsilon },g)}b^{\frn}\left(\,\int\limits_{a(x,\xi)+\langle x, \xi\rangle\leq 3R}m^{-n\varepsilon}(x,\xi)d\xi\right)^{\half}\\
& \leq & C\vert|q\rvert|_{l;S(m^{-\frac{n}{2}\varepsilon },g)}b^{\frn}\left(\,\int\limits_{\jp\leq 3R}\jp^{-n\varepsilon}d\xi\right)^{\half}\,.
\end{eqnarray*}
Now, by choosing $b=R^{\varepsilon -1}$, we obtain
\[\int\limits_{|y|<b}|\hat{q}_x(y)|dy\leq C\vert|q\rvert|_{l;S(m^{-\frac{n}{2}\varepsilon },g)}.\]
On the other hand, for the analysis of the integral $\int\limits_{|y|\geq b}|\hat{q}_x(y)|dy, $ we first note that for an integer $k>\frac{n}{4}$, we have $k>\frac{n}{4}\geq n(1-\varepsilon)$, since $\varepsilon\geq \frac{3}{4}$. Hence
\begin{eqnarray*}
\int\limits_{|y|\geq b}|\hat{q}_x(y)|dy \leq & Cb^{\frn -k }\left(\,\int\limits_{y\geq b}|y|^{2k} |\hat{q}_x(y)|^{2}dy\right)^{\half}\\
\leq & Cb^{\frn -k }\left(\,\int\limits_{\Ran}|\nabla_{\xi}^k q(x,\xi)|^{2}d\xi\right)^{\half},
\end{eqnarray*}and the Plancherel theorem implies the inequality
\begin{equation}\label{order:inequality:lemma}
  \int\limits_{|y|\geq b}|\hat{q}_x(y)|dy   \leq  C\lvert|q\rvert|_{k;S(m^{-\frac{n \varepsilon}{2}},g)}b^{\frn -k }\left(\,\int\limits_{R\leq   a_2(x,\xi) +|x|^2+\jpxe\leq 3R}m^{-n\varepsilon -k}(x,\xi)d\xi\right)^{\half}.
\end{equation}

In order to estimate the last integral, we first note that 
there exists $C>1$ such that for all $(x,\xi)$:
\[a_2(x,\xi) +|x|^2+\jpxe\leq C(\jp^2+|x|^2).\]
We split the argument into two cases
:
$|x|^2\leq \frac{R}{2C}$ and $|x|^2>\frac{R}{2C}$. \\

If $|x|^2\leq \frac{R}{2C}$, we get \[R\leq   a_2(x,\xi) +|x|^2+\jpxe\leq C(\jp^2 + |x|^2)\leq C\jp^2+ \frac{R}{2}. \]
Hence $\frac{R}{2}\leq C\jp^2.$
Thus 
\[b^{\frn -k }\left(\,\int\limits_{R\leq   a_2(x,\xi) +|x|^2+\jpxe\leq 3R}m^{-n\varepsilon -k}(x,\xi)d\xi\right)^{\half}\]

\[\leq  Cb^{\frn -k }\left(\,\int\limits_{R\leq   2C \jp^2}\jp^{-n\varepsilon -k}d\xi\right)^{\half}\leq CR^{(\varepsilon-1)(\frn-k)+\frac{1}{4}(n(1-\varepsilon)-k)}\leq C.\]

To see the last inequality, we note that 
\[(\varepsilon-1)(\frn-k)+\frac{1}{4}(n(1-\varepsilon)-k)=\frac{n}{4}(\varepsilon -1)+k(\frac{3}{4}-\varepsilon).\]
Since $\varepsilon\geq \frac{3}{4}$, we obtain the desired bound.\\

On the other hand, if $|x|^2>\frac{R}{2C}$, we have
$-|x|^2<-\frac{R}{2C}$ and 
\[\jp\leq   a_2(x,\xi) +\jpxe\leq  3R-|x|^2\leq 3R-\frac{R}{2C}=\frac{(6C-1)}{2C}R. \]
Since $6C-1>0$ and by using the fact that $|x|^2>\frac{R}{2C}$, we have $\jpx^{-k}\leq C'R^{-\frac{k}{2}}$ and  we  obtain
\begin{align*}
b^{\frn -k }\left(\,\int\limits_{R\leq   a_2(x,\xi) +|x|^2+\jpxe\leq 3R}m^{-n\varepsilon -k}(x,\xi)d\xi\right)^{\half}=&b^{\frn -k } \left(\,\int\limits_{\jp\leq C'R}m^{-n\varepsilon}(x,\xi)m^{-k}(x,\xi)d\xi\right)^{\half}\\
\lesssim &R^{(\frac{n}{2}-k)(\varepsilon -1)}\left(\,\int\limits_{\jp\leq C'R}\jp^{-n\varepsilon}\jpx^{-2k}d\xi\right)^{\half}\\
\lesssim &R^{(\frac{n}{2}-k)(\varepsilon -1)}\jpx^{-k}\left(\,\int\limits_{\jp\leq C'R}\jp^{-n\varepsilon}d\xi\right)^{\half}\\
\leq & CR^{(\frac{n}{2}-k)(\varepsilon -1)}R^{-\frac{k}{2}}R^{\frac{n}{2}(1-\varepsilon)}.
\end{align*}
By observing that 
\[R^{(\frac{n}{2}-k)(\varepsilon -1)+\frac{n}{2}(1-\varepsilon)-\frac{k}{2}}=R^{k(\frac{1}{2}-\varepsilon)}\leq C,\]
since $\varepsilon\geq\frac{3}{4}\geq \half$, the desired estimate follows.
\end{proof}

 We can now state the $L^{\infty}$-$BMO$ bounds as a consequence of our $L^{\infty}$-$L^{\infty}$ estimates. 
\begin{theorem}\label{parta} Let $\frac{3}{4}\leq \beta<1,$ and let $\sigma(x,D)\in \textnormal{Op}(S(m^{-\frn\beta},g)).$  Then, for  $\ell>\frac{n}{4},$ we have
\begin{equation*}
    \Vert \sigma(x,D)f\Vert_{BMO(\mathbb{R}^n)}\leq C\Vert \sigma(x,\xi)\Vert_{\ell,S(m^{-\frn\beta},g)}\Vert f\Vert_{L^\infty(\mathbb{R}^n)},
\end{equation*}where the constant $C>0,$ is independent of  $f\in L^\infty(\mathbb{R}^n).$ So, $\sigma(x,D):L^\infty(\mathbb{R}^n)\rightarrow BMO(\mathbb{R}^n)$ extends to a bounded operator.
\end{theorem}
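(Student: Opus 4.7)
I would prove Theorem \ref{parta} along the lines of Fefferman's classical $L^\infty\to BMO$ theorem \cite{fe}, adapted to the Weyl--H\"ormander calculus: dyadically decompose $\sigma$ with respect to the weight $m$, use Lemma \ref{lem:l} as a uniform $L^\infty$-building block on each shell, and combine this with a localization argument on balls.

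\textbf{Dyadic decomposition.} First I would select a smooth partition of unity $\{\varphi_R\}_{R\in 2^{\mathbb{N}_0}}$ with $\varphi_R$ supported on the shell $\{R\leq a(x,\xi)+\jpxet\leq 3R\}$ for $R\geq 1$, plus a low-frequency piece on $\{a+\jpxet\leq 2\}$. Setting $\sigma_R:=\sigma\,\varphi_R$, each $\sigma_R$ lies in $S(m^{-n\beta/2},g)$ with seminorms controlled uniformly by $\|\sigma\|_{\ell,S(m^{-n\beta/2},g)}$. Applying Lemma \ref{lem:l} with $\varepsilon=\beta$ to each $\sigma_R$ yields
\[
\|\sigma_R(x,D)h\|_{L^\infty}\leq C\,\|\sigma\|_{\ell,S(m^{-n\beta/2},g)}\,\|h\|_{L^\infty},
\]
uniformly in $R$, which is the workhorse estimate.

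\textbf{BMO estimate on balls.} Fix a ball $B=B(x_0,r)$ and split $f=f_1+f_2$ with $f_1:=f\chi_{2B}$. I would take the constant $c_B$ to be (a regularized version of) $\sigma(x,D)f_2(x_0)$. The contribution of $f_1$ is handled via $L^2$-boundedness: since $m\geq 1$ we have $S(m^{-n\beta/2},g)\subset S(1,g)$, and Theorem \ref{thm.cont} gives boundedness of $\sigma(x,D)$ on $L^2=H(1,g)$; Cauchy--Schwarz then yields
\[
\frac{1}{|B|}\int_B|\sigma(x,D)f_1(x)|\,dx\leq |B|^{-1/2}\|\sigma(x,D)f_1\|_{L^2}\leq C\|f\|_{L^\infty}.
\]
For the far-field piece $f_2$, I would use the pointwise bound
\[
|\sigma(x,D)f_2(x)-\sigma(x,D)f_2(x_0)|\leq \|f\|_{L^\infty}\sum_R\int_{|y-x_0|\geq r}|K_R(x,y)-K_R(x_0,y)|\,dy,
\]
where $K_R$ is the Schwartz kernel of $\sigma_R(x,D)$, and show the right-hand side is $\leq C\|\sigma\|_{\ell,S(m^{-n\beta/2},g)}\|f\|_{L^\infty}$ uniformly for $x\in B$.

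\textbf{Main obstacle.} The hard step is the summable off-diagonal kernel estimate. I would establish it by integration by parts in $\xi$ in the oscillatory integral representation of $K_R$, exploiting the $\xi$-derivative estimates of $\sigma_R$ inherited from Lemma \ref{class:smg}, in the same spirit as the argument producing Lemma \ref{lem:l}; the threshold $\beta\geq 3/4$ is precisely what makes the resulting geometric series in $R$ converge, just as it did in the proof of Lemma \ref{lem:l} through the bookkeeping $k(\tfrac{3}{4}-\varepsilon)\leq 0$. The additional subtlety, compared with the isotropic setting of \cite{fe}, is that the shells $\{a+\jpxet\sim R\}$ are anisotropic and adapted to the degenerate principal part $a_2$, so the integration by parts has to respect the actual shape prescribed by the H\"ormander metric $g$ rather than the standard frequency annuli; this is where the degenerate character of the operator enters in an essential way.
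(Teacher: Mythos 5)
You take a genuinely different route from the paper. The paper's proof follows the Fefferman--Delgado scheme: it decomposes the \emph{symbol} with a cut-off that depends on the ball radius, $\sigma = \sigma^0 + \sigma^1$ with $\sigma^0 = \sigma\,\gamma(r\,m(x,\xi))$ supported where $m \lesssim r^{-1}$, treats $\sigma^0$ by the mean value theorem together with a dyadic decomposition of $\partial_x\sigma^0$ on shells (each piece handled by Lemma \ref{lem:l}), and treats $\sigma^1$ by $L^2$-boundedness through an auxiliary elliptic factor $L = m(x,D)^{\frac{n}{2}\beta\varepsilon}$ together with a commutator $[M_\phi, A^1]$ whose symbol is computed explicitly. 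At no point does the paper prove a far-field kernel oscillation estimate. Your plan, in contrast, is the Calder\'on--Zygmund route: a ball-independent dyadic symbol decomposition plus a local/far-field split of the \emph{function} $f = f_1 + f_2$, with $f_2$ handled through a Hörmander-type kernel estimate $\sum_R\int_{|y-x_0|\geq r}|K_R(x,y)-K_R(x_0,y)|\,dy \lesssim 1$ uniformly in $r$ and $x_0$.

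That CZ route can be made to work here, but your sketch has a real gap at precisely the step you flag as the obstacle. Integration by parts in $\xi$ alone, summed over dyadic shells $m \sim R$, does \emph{not} yield a convergent series: the far-field bound $\int_{|y-x_0|\geq r}|K_R(x,y)|\,dy \lesssim r^{\frac{n}{2}-k}\,R^{\frac{n}{4}-\frac{n\beta}{2}-\frac{k}{2}}$ decays in $R$, but summing it over \emph{all} shells produces a total that behaves like $r^{\frac{n}{2}-k}$, which blows up as $r\to 0$ (once $k > n/2$, as needed for the $\xi$-integration by parts). The summability is rescued only by switching strategies at the crossover scale $R \sim r^{-2}$ (equivalently $m \sim r^{-1}$): for $R \lesssim r^{-2}$ one must use a mean-value-theorem bound $|K_R(x,y)-K_R(x_0,y)|\lesssim r\,|\nabla_x K_R|$, with $\int|\nabla_x K_R|\,dy \lesssim R^{1/2}$ coming from the $g$-metric estimate for $\partial_x\sigma_R$ via Lemma \ref{lem:l} (and one has to verify that $\partial_x\sigma \in S(m^{-\frac{n\beta}{2}+\frac{1}{2}},g)$ using that $\langle\xi\rangle^2+|x|^2 \leq m^2$ on the shell), giving a near-sum $\sim r\cdot r^{-1} \sim 1$; only for $R \gtrsim r^{-2}$ is the integration-by-parts decay used, and then the far-sum is $\lesssim r^{n\beta}$, bounded for $r\leq 1$. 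Your plan mentions only the integration by parts, not the mean-value-theorem half of the dichotomy nor the $r$-dependent crossover — which is precisely the content of the paper's $\sigma^0/\sigma^1$ split. One should also note a conceptual point: operators with symbols in $S(m^{-\frac{n\beta}{2}},g)$ for $\beta<1$ are not Calder\'on--Zygmund operators in the ordinary sense, so the kernel estimate you are after is not the standard one; it only holds because the near/far dichotomy in $R$ is calibrated to $r$, and obscuring this calibration is where your proposal falls short of a complete argument.
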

\begin{proof}
Let $f\in L^\infty(\mathbb{R}^n),$ and let us denote $A=\sigma(x,D).$  Let us fix a ball $B(x_0,r)$ where $x_0\in \mathbb{R}^n,$ and $r>0.$ We will prove that there exists a constant $C>0,$ independent of $f$ and $r,$ such that
\begin{equation}\label{eq1} 
    \frac{1}{|B(x_0,r)|}\int\limits_{B(x_0,r)}|Af(x)-(Af)_{B(x_0,r)}|dx\leqslant C\Vert \sigma\Vert_{\ell,S(m^{-\frn\beta},g)}\Vert f\Vert_{L^\infty(\mathbb{R}^n)}
\end{equation}
 where 
\begin{equation*}
    (Af)_{B(x_0,r)}:=\frac{1}{|B(x_0,r)|}\int\limits_{B(x_0,r)}Af(x)dx.
\end{equation*}  
 We will split to $\sigma$ as the sum of two  symbols  
\begin{equation*}
    \sigma(x,\xi)=\sigma^0(x,\xi)+\sigma^1(x,\xi),\,\,\,\sigma^{j}(x,\xi)\in S(m^{-\frn\beta},g),\,j=0,1,
\end{equation*} in a such way that both, $\sigma^0(x,\xi)$  and  $\sigma^1(x,\xi),$ satisfy the estimate
\begin{equation}\label{symbolestimate}
    \Vert \sigma^{j}\Vert_{\ell,S(m^{-\frn\beta},g)}\leqslant C_{j,\ell}\Vert \sigma\Vert_{\ell, S(m^{-\frn\beta},g)},\,\,j=0,1,\,\ell\geqslant   1. 
\end{equation} Now, if $A^j:=\textnormal{Op}(\sigma^j),$ for $j=0,1,$ then $A=A^0+A^1$ on $C^\infty(\mathbb{R}^n),$ and we only need to prove that\begin{equation}\label{splitestimate}
    \frac{1}{|B(x_0,r)|}\int\limits_{B(x_0,r)}|A^jf(x)-(A^jf)_{B(x_0,r)}|dx\leqslant C\Vert \sigma^j\Vert_{\ell, S(m^{-\frn\beta},g)}\Vert f\Vert_{L^\infty(\mathbb{R}^n)}.
\end{equation} Then, if we prove \eqref{symbolestimate}, we can deduce \eqref{eq1}. Now, we proceed to prove the existence of $\sigma^0$ and $\sigma^1$ satisfying the requested properties.
  Let us define
 \begin{equation}\label{gamma:ytilde}
     \sigma^0(x,\xi)=\sigma(x,\xi)\tilde{\gamma}(x,\xi),\textnormal{    }\sigma^{1}(x,\xi)=\sigma(x,\xi)-\sigma^0(x,\xi),\,(x,\xi)\in \mathbb{R}^n,
 \end{equation}where $$\tilde{\gamma}(x,\xi):=\gamma(r\times m(x,\xi) )=\gamma(r\times(a_2(x,\xi)+|x|^2+\langle x, \xi\rangle)),$$ and $\gamma\in C^\infty_0(\mathbb{R},\mathbb{R}^+_0),$ is a smooth function supported in $\{t:|t|\leqslant 1\},$ satisfying $\gamma(t)=1,$ for $|t|\leqslant \frac{1}{2}.$

 In order to prove \eqref{symbolestimate} let us proceed as follows. Since $\tilde{\gamma}(x,\xi)$ is a cut-off function supported in the set $\{(x,\xi):a_2(x,\xi)+|x|^2+\langle x, \xi\rangle\leq r^{-1}\},$ in this region, the symbol $  \sigma^0(x,\xi)=\sigma(x,\xi)\tilde{\gamma}(x,\xi)$ also satisfies the $S(m,g)$-estimates in \eqref{inwhk} but probably depending on $r>0$. To remove this dependence in $r>0,$ let us use the natural Fr\'echet topology on $S(m,g)$ induced by its family of seminorms. This is the smalles topology on $S(m,g)$ which makes these seminorms continuous. Indeed, we have that
 \begin{equation*}
     \lim_{r\rightarrow 0^{+}}\Vert \sigma(x,\xi)\gamma(r\times(a_2(x,\xi)+|x|^2+\langle x, \xi\rangle))\Vert_{\ell, S(m^{-\frn\beta},g)}=\Vert \sigma(x,\xi)\Vert_{\ell, S(m^{-\frn\beta},g)},
 \end{equation*} and 
 \begin{equation*}
     \lim_{r\rightarrow \infty}\Vert \sigma(x,\xi)\gamma(r\times(a_2(x,\xi)+|x|^2+\langle x, \xi\rangle))\Vert_{\ell, S(m^{-\frn\beta},g)}=0.
 \end{equation*}In consequence, there exists $0<\varkappa<1,$ such that for all $r\in (0,\varkappa)\cup [\varkappa^{-1},\infty),$ we have that
 $$\Vert \sigma(x,\xi)\gamma(r\times(a_2(x,\xi)+|x|^2+\langle x, \xi\rangle))\Vert_{\ell, S(m^{-\frn\beta},g)}\leq 2 \Vert \sigma(x,\xi)\Vert_{\ell, S(m^{-\frn\beta},g)}.$$
 On the other hand, the function 
 \begin{equation}
     r\in [\varkappa,\varkappa^{-1}]\mapsto \sigma(x,\xi)\gamma(r\times(a_2(x,\xi)+|x|^2+\langle x, \xi\rangle))\in S(m,g),
 \end{equation}is continuous, and since $S(m,g)$ is a complete topological space, there exists $r_0\in  [\varkappa,\varkappa^{-1}] $ such that
 \begin{align*}
   & \max_{r\in [\varkappa,\varkappa^{-1}]} \Vert \sigma(x,\xi)\gamma(r\times(a_2(x,\xi)+|x|^2+\langle x, \xi\rangle))\Vert_{\ell, S(m^{-\frn\beta},g)}\\
    &=\Vert \sigma(x,\xi)\gamma(r_0\times(a_2(x,\xi)+|x|^2+\langle x, \xi\rangle))\Vert_{\ell, S(m^{-\frn\beta},g)} \lesssim_{r_0}\Vert \sigma(x,\xi)\Vert_{\ell, S(m^{-\frn\beta},g)}.
 \end{align*}Since the previous estimate depends on $r_0,$ and then on $\varkappa,$ but it is independent of $r>0,$ the estimates above prove the estimate in \eqref{symbolestimate}. 
  
 To estimate the integral
\begin{equation*}
    I_0:=\frac{1}{|B(x_0,r)|}\int\limits_{B(x_0,r)}|A^0f(x)-(A^0f)_{B(x_0,r)}|dx,
\end{equation*} we will use the mean value theorem. Indeed,  observe that
\begin{align}\label{r}
    |A^0f(x)-A^0f(y)| &\leqslant C_{0}\sum_{k=1}^{n}\Vert \partial_{x_k}A^0f\Vert_{L^\infty(\mathbb{R}^n)}|x-y|
    &\leqslant C r\sum_{k=1}^{n}\Vert \partial_{x_k}A^0f\Vert_{L^\infty(\mathbb{R}^n)}.
\end{align}
In order to estimate the $L^\infty$-norm of $\partial_{x_k}A^0f,$ first let us observe that the symbol of $\partial_{x_k}A^0=\textnormal{Op}(\sigma'_k)$ is given by
\begin{equation}\label{symbolderivative}
\sigma'_k(x,\xi):=   2\pi i \xi_k\sigma^0(x,\xi)+(\partial_{x_k}\sigma^0(x,\xi)).
\end{equation}
Indeed, the Leibniz law gives
\begin{align*}
    \partial_{x_k}A^0f(x)&=\int\limits_{\mathbb{R}^n}(\partial_{x_k}(e^{i2\pi x \xi}\sigma^0(x,\xi))\widehat{f}(\xi))\,d\xi\\
    &=\int\limits_{\mathbb{R}^2}([\partial_{x_k}(e^{i2\pi x \xi})\sigma^0(x,\xi)+e^{i2\pi x \xi}\partial_{x_k}\sigma^0(x,\xi))]\widehat{f}(\xi))\,d\xi\,,
\end{align*}
 and we obtain
\begin{align*}
     \partial_{x_k}A^0f(x)&=\int\limits_{\mathbb{R}^2}([e^{i2\pi x \xi}2\pi i \xi_k\sigma^0(x,\xi)+e^{i2\pi x \xi}\partial_{x_k}\sigma^0(x,\xi))]\widehat{f}(\xi)),
\end{align*}  which proves \eqref{symbolderivative}. By using a suitable partition of unity we will decompose $\sigma'_k(x,\xi)$ in the following way,
\begin{equation*}
    \sigma'_k(x,\xi)=\sum_{j=1}^\infty\rho_{j,k}(x,\xi).
\end{equation*}
To construct the family of operators $\rho_{j,k}(x,\xi)$ we will proceed as follows. We choose a smooth real  function $\eta$ satisfying $\eta(t)\equiv 0$ for $|t|\leqslant 1$ and $\eta(t)\equiv 1$ for $|t|\geqslant   2.$ Set
\begin{equation*}
    \rho(t)=\eta(t)-\eta(\frac{t}{2}).
\end{equation*} The support of $\rho$ satisfies $\textnormal{supp}(\rho)\subset[1,4] .$ One can check that 
\begin{equation*}
    1=\eta(t)+\sum_{j=1}^\infty\rho(2^{j}t),\,\,\,\,\textnormal{ for all }t\in \mathbb{R}.
\end{equation*} Indeed, 
\begin{equation*}
    \eta(t)+\sum_{j=1}^\ell\rho(2^{j}t)=\eta(t)+\sum_{j=1}^\ell\left(\eta(2^{j}t)-\eta(2^{j-1}t)\right)=\eta(2^{\ell}t)\rightarrow 1,\,\,\ell\rightarrow\infty.
\end{equation*}
For $t=r m(x,\xi)$ we have
\begin{equation*}
    1=\eta(rm(x,\xi))+\sum_{j=1}^\infty\rho(2^{j}rm(x,\xi)).
\end{equation*} 
 Observe that
 \begin{equation*}
     \sigma'_k(x,\xi)=\eta(rm(x,\xi))\sigma'_k(x,\xi)+\sum_{j=1}^\infty\rho(2^{j}rm(x,\xi))\sigma'_k(x,\xi).
 \end{equation*} Because $\eta(rm(x,\xi))\sigma'_k(x,\xi)=0,$ in view that the support of $\tilde\gamma$ and $\eta$ are disjoint sets, we have
  \begin{equation*}
     \sigma'_k(x,\xi)=\sum_{j=1}^\infty\rho_{j,k}(x,\xi),\,\,\,\rho_{j,k}(x,\xi):=\rho(2^{j}rm(x,\xi))\sigma'_k(x,\xi).
 \end{equation*} 
 From the mean value theorem we have,
 \begin{align*}
    |A^0f(x)-A^0f(y)| &\leqslant C_{0}\sum_{k=1}^{n}|y-x|\cdot\Vert \partial_{x_k}A^0f\Vert_{L^\infty(\mathbb{R}^n)}\\
    &\lesssim r\sum_{k=1}^{n}\Vert \partial_{x_k}A^0f\Vert_{L^\infty(\mathbb{R}^n)}.
\end{align*}
 So, we have,
 \begin{align*}
     I_0:&=\frac{1}{|B(x_0,r)|}\int\limits_{B(x_0,r)}|A^0f(x)-(A^0f)_{B(x_0,r)}|dx\\
     &= \frac{1}{|B(x_0,r)|}\int\limits_{B(x_0,r)}\left|\frac{1}{|B(x_0,r)|}\int\limits_{B(x_0,r)}(A^0f(x)-A^0f(y))dy\right|dx\\
     &\leqslant \frac{1}{|B(x_0,r)|^{2}}\int\limits_{B(x_0,r)}\int\limits_{B(x_0,r)}|A^0f(x)-A^0f(y)|dydx     \lesssim r \sup_{1\leqslant k\leqslant n}\Vert \partial_{x_k}A^0f\Vert_{L^\infty(\mathbb{R}^n)}\\
     &=r\sup_{1\leqslant k\leqslant n}\Vert \textnormal{Op}(\sigma'_{k})f\Vert_{L^\infty(\mathbb{R}^n)}\leqslant r\sup_{1\leqslant k\leqslant n}\sum_{j=1}^\infty\Vert \textnormal{Op}(\rho_{j,k})f\Vert_{L^\infty(\mathbb{R}^n)}\\
     &\lesssim r \sum_{j=1}^\infty r^{-1}2^{-j} \Vert \sigma\Vert_{\ell,S(m^{-\beta},g)}\Vert f\Vert_{L^\infty(\mathbb{R}^n)}\\
      &\lesssim rr^{-1}\Vert \sigma\Vert_{\ell,S(m^{-\frn\beta},g)}\Vert f\Vert_{L^\infty(\mathbb{R}^n)}\\
      &=\Vert \sigma\Vert_{\ell,S(m^{-\frn\beta},g)}\Vert f\Vert_{L^\infty(\mathbb{R}^n)},
\end{align*} 
where we have used that for $\ell > \frac{n}{4}$
\begin{align*}
    \Vert \textnormal{Op}(\rho_{j,k})\Vert_{\mathscr{B}(L^\infty(\mathbb{R}^n))}\lesssim \Vert \rho_{j,k}\Vert_{\ell,S(m^{-\frn\beta},g)}\lesssim r^{-1}2^{-j}\Vert \sigma\Vert_{\ell,S(m^{-\frn\beta},g)},
\end{align*}in view of Lemma \ref{lem:l} and Proposition 2.5 of \cite{Delgado2006} applied to  $\rho_{j,k}$ with $s=2^jr.$
In order to finish the proof, we need to prove \eqref{splitestimate} for $j=1.$
In order to obtain a similar $L^\infty(\mathbb{R}^n)$-${BMO}(\mathbb{R}^n)$ estimate for $A^{1},$ we will proceed as follows. Let $\phi$ be a smooth function compactly supported in $B(x_0,2r)$ satisfying that
\begin{equation*}
    \phi(x)=1, \textnormal{  for  }\,\,x\in B(x_0,r),\textnormal{ and }0\leqslant \phi\leqslant 10.
\end{equation*}
Note that
\begin{equation*}
   |B(x_0,r)|\leqslant \int\limits_{B(x_0,r)}\phi(x)^2dx\leqslant  \int\limits_{B(x_0,2r)}\phi(x)^2dx =\Vert \phi\Vert_{L^2(\mathbb{R}^n)}^2\leqslant 100  |B(x_0,2r)|,
\end{equation*} and consequently
\begin{equation}\label{doubling}
     |B(x_0,r)|^{\frac{1}{2}}\leqslant \Vert \phi\Vert_{L^2(\mathbb{R}^n)}\leqslant 10|B(x_0,2r)|^{\frac{1}{2}}\leqslant 10C|B(x_0,r)|^{\frac{1}{2}},
\end{equation}where in the last inequality we have used  that the Lebesgue measure on $\mathbb{R}^n$ satisfies the doubling property. Taking into account that
\begin{align*}
    \frac{1}{|B(x_0,r)|}\int\limits_{B(x_0,r)}|A^1f(x)-(A^1f)_{B(x_0,r)}|dx\leqslant \frac{2}{|B(x_0,r)|}\int\limits_{B(x_0,r)}|A^1f(x)|dx,
\end{align*}
we will estimate the right-hand side. Indeed, let us observe that
\begin{align*}
 &\frac{1}{|B(x_0,r)|} \int\limits_{B(x_0,r)}|A^1f(x)|dx\leqslant  \frac{1}{|B(x_0,r)|}\int\limits_{B(x_0,r)}|\phi(x)A^1f(x)|dx\\
  &\leqslant  \frac{1}{|B(x_0,r)|}\int\limits_{B(x_0,r)}|A^1[\phi f](x)|dx+\frac{1}{|B(x_0,r)|} \int\limits_{B(x_0,r)}|[M_{\phi},A^1 ] f(x)|dx\\
 :&=I+II,
\end{align*}where $M_\phi$ is the multiplication  operator by $\phi.$ To estimate $I,$ observe that, in view of the Cauchy-Schwartz inequality, we have
\begin{equation*}
    \frac{1}{|B(x_0,r)|}\int\limits_{B(x_0,r)}|A^1[\phi f](x)|dx\leqslant \frac{1}{|B(x_0,r)|^{\frac{1}{2}}}\left(\int\limits_{B(x_0,r)} |A^1[\phi f](x)|^2dx   \right)^{\frac{1}{2}}.
\end{equation*}
 Let $L:=m(x,D)^{\frn\beta\varepsilon}\in\textnormal{Op}( S(m^{\frn\beta\varepsilon},g)),$ where $0<\varepsilon<1.$
Observe that
\begin{equation}\label{L2}
   \textnormal{Op}( S(m^{-\frn\beta\varepsilon},g))\subset \textnormal{Op}( S(1,g))\subset \mathscr{B}(L^2(\mathbb{R}^n))\,.
\end{equation} Because $A^{1}\in S(m^{-\frn\beta},g), $ the Weyl-H\"ormander calculus gives
\begin{equation*}
    A^{1}L\in  \textnormal{Op}( S(m^{-\frn\beta(1-\varepsilon)},g)).
\end{equation*}
It follows from \eqref{L2} that $A^{1}L$ is bounded on $L^2(\mathbb{R}^n).$  Consequently,
\begin{align*}
 &  \frac{1}{|B(x_0,r)|^{\frac{1}{2}}}\left(\int\limits_{B(x_0,r)} |A^1L[L^{-1}(\phi f)](x)|^2dx   \right)^{\frac{1}{2}}\leqslant \frac{\Vert A^1L[L^{-1}(\phi f)] \Vert_{L^2(\mathbb{R}^n)}}{|B(x_0,r)|^{\frac{1}{2}}}\\
   &\leqslant \frac{  C\Vert \sigma_{A^1L}\Vert_{k,S(m^{-\frn\beta(1-\varepsilon)},g)}   \Vert L^{-1}( \phi f )\Vert_{L^2(\mathbb{R}^n)}}{|B(x_0,r)|^{\frac{1}{2}}}.
\end{align*}
By observing that 
\begin{equation*}
    \Vert L^{-1}(\phi f) \Vert_{L^2(\mathbb{R}^n)}=\Vert \phi f\Vert_{H(m^{-\frn\beta\varepsilon},g)},
\end{equation*}
where $H(m^{-\frn\beta\varepsilon},g)$ is the Sobolev space of order $-\frn\beta\varepsilon,$  the embedding $L^2(\mathbb{R}^n)\hookrightarrow H(m^{-\frn\beta\varepsilon},g), $ implies that
\begin{equation*}
 \Vert L^{-1}(\phi f) \Vert_{L^2(\mathbb{R}^n)}=\Vert \phi f\Vert_{H(m^{-\frn\beta\varepsilon},g)}\lesssim \Vert \phi f \Vert_{L^2(\mathbb{R}^n)} .    
\end{equation*}
Moreover, from \eqref{doubling}, we deduce the inequality
\begin{equation*}
     \Vert \phi f \Vert_{L^2(\mathbb{R}^n)}\leqslant \Vert f\Vert_{L^\infty(\mathbb{R}^n)}\Vert \phi\Vert_{L^2(\mathbb{R}^n)}\lesssim   10\Vert f\Vert_{L^\infty(\mathbb{R}^n)}|B(x_0,r)|^{\frac{1}{2}}.
\end{equation*}
So, we conclude
\begin{align*}
    I:= \frac{1}{|B(x_0,r)|}\int\limits_{B(x_0,r)}|A^1[\phi f](x)|dx&\leqslant {  C\Vert \sigma_{A^1L}\Vert_{\ell',S(1,g)} }\Vert f\Vert_{L^\infty(\mathbb{R}^n)}\\
   & \leqslant {  C\Vert \sigma_{A^1}\Vert_{\ell'',S(m^{-\frac{n\beta\varepsilon}{2}},g)} }\Vert f\Vert_{L^\infty(\mathbb{R}^n)}
\end{align*}
which is the desired estimate for $I.$ Indeed,  since  $A^1L\in S(1,g),$ we have that $A^1=A^1LL^{-1}\in S(m^{-\frac{n\beta\varepsilon}{2}},g).$ Now, we will estimate $II.$ For this, observe that the symbol of $[M_{\phi},A^1 ]=\textnormal{Op}(\theta) ,$ is given by
\begin{equation}\label{theta}
    \theta(x,\xi)=\int\limits_{\mathbb{R}^n}(\phi(x)-\phi(x-y))k(y,x-y)e^{-2\pi i y\xi}dy,
\end{equation}
where $x\mapsto k(\cdot,\cdot)\in C^\infty(\mathbb{R}^n)\otimes \mathscr{D}'(\mathbb{R}^n),$ is the integral kernel of $A^1.$ Indeed, the equality \eqref{theta}  was shown in \cite[page 364]{Delgado2006}.
Using the Taylor expansion we obtain
\begin{equation*}
    \phi(x-y)=\phi(x)+\sum_{|\alpha|=1}C_{\alpha}(\partial_{x}^{\alpha}\phi)(x)y^\alpha,
\end{equation*}
So, we can write
\begin{equation*}
    \theta(x,\xi)=\sum_{|\alpha|=1}  C_{\alpha} {\partial_{x}^{\alpha}\phi(x)}\partial_\xi^{{\alpha}} \sigma(x,\xi) .
\end{equation*}The hypothesis $\sigma \in S(m^{-\frn\beta},g), $ implies that $\partial_\xi^{{\alpha}} \sigma(x,\xi) \in  S(m^{-\frn\beta},g),$ and the fact that $\phi$ is of compact support, implies the same conclusion for $\theta:$  $\theta(x,\xi) \in  S(m^{-\frn\beta},g).$
So, one has   
\begin{align*}
   &\Vert\textnormal{Op}(\theta) \Vert_{\mathscr{B}(L^\infty(\mathbb{R}^n))}  \lesssim  \left(\sup_{(x,\xi)\in \mathbb{R}^{2n},|\alpha|\leqslant \ell'} \vert [\partial_\xi^{\alpha} \theta(x,\xi)]m(x,\xi)^{\frn\beta})  \vert     \right)\\
   &  \lesssim\sup_{|\alpha|\leqslant \ell} \Vert {\partial_{x}^{\alpha}\phi} \Vert_{L^\infty(\mathbb{R}^n)} \left(\sup_{(x,\xi)\in \mathbb{R}^{2n},|\alpha|\leqslant \ell'} \vert [\partial_\xi^{\alpha} \sigma(x,\xi)]m(x,\xi)^{\frn\beta})  \vert     \right)\\
   &\lesssim_{\ell'} \Vert \sigma\Vert_{\ell'+1,S(m^{-\frn\beta},g)}.
\end{align*}
So, we deduce
\begin{equation*}
    \frac{1}{|B(x_0,r)|} \int\limits_{B(x_0,r)}|[M_{\phi},A^1 ] f(x)|\leqslant \Vert [M_{\phi},A^1 ] f \Vert_{L^\infty(\mathbb{R}^n)},
\end{equation*} and consequently,
\begin{align*}
     \frac{1}{|B(x_0,r)|} \int\limits_{B(x_0,r)}|[M_{\phi},A^1 ] f(x)|&\leqslant\Vert\textnormal{Op}(\theta)f \Vert_{L^\infty(\mathbb{R}^n)}\\
     &\lesssim \Vert \sigma\Vert_{\ell'+1,S(m^{-\frn\beta},g)} \Vert f \Vert_{L^\infty(\mathbb{R}^n)}.
\end{align*}
Thus, we obtain 
\begin{equation*}
    II:= \frac{1}{|B(x_0,r)|}\int\limits_{B(x_0,r)}|[M_\phi,A^1 ]f(x)|dx\leqslant   C\Vert \sigma_{A}\Vert_{\ell'+1,S(m^{-\frn\beta},g)}\Vert f\Vert_{L^\infty(\mathbb{R}^n)}.
\end{equation*} 

Thus, the proof is complete.
\end{proof}

\subsection{$L^p$-boundedness}
The previous theorem and classical real interpolation  have  the following consequences on the $L^p$-boundedness for the classes $S(m^{-\frn\beta},g)$. The proof of this theorem follows standard arguments and we omit it.
\begin{theorem}\label{Lpreal} Let $\varepsilon_0=\frac{3}{4}$ and  $\varepsilon_0\leq \beta<1$ and let $\sigma(x,D)\in \textnormal{Op}S(m^{-\frn \beta},g).$ Then,  $\sigma(x,D):L^p(\mathbb{R}^n)\rightarrow L^p(\mathbb{R}^n)$ extends to a bounded operator, for all $1<p<\infty.$  Moreover, for every $\ell>\frac{n}{4},$ we have
\begin{equation*}
    \Vert \sigma(x,D)f\Vert_{L^p(\mathbb{R}^n)}\leq C\Vert \sigma(x,\xi)\Vert_{\ell,S(m^{-n\beta/2},g)}\Vert f\Vert_{{L^p(\mathbb{R}^n)}},
\end{equation*}where the constant $C>0,$ is independent of  $f\in L^p(\mathbb{R}^n),$ 
\end{theorem}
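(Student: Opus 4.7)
The plan is to deduce Theorem \ref{Lpreal} by combining the $L^\infty$-$BMO$ bound of Theorem \ref{parta} with duality, the $L^2$-boundedness coming from the Weyl--H\"ormander calculus, and Fefferman--Stein interpolation between the Hardy and $BMO$ endpoints. First, I would observe that the symbol class $S(m^{-n\beta/2},g)$ is closed under complex conjugation, so the formal Weyl adjoint of $\sigma^w(x,D)$ is $\overline{\sigma}^w(x,D)\in \textnormal{Op}(S(m^{-n\beta/2},g))$ with seminorms of $\overline{\sigma}$ equal to those of $\sigma$. Applying Theorem \ref{parta} to $\overline{\sigma}$ and dualising via the $H^1$-$BMO$ duality yields
\[
\sigma(x,D):H^1(\mathbb{R}^n)\longrightarrow L^1(\mathbb{R}^n),
\]
with norm bounded by $C\,\|\sigma\|_{\ell,S(m^{-n\beta/2},g)}$ for $\ell>n/4$.

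Next I would establish the $L^2$-endpoint. Since $\beta>0$ and $m(x,\xi)\geq 1$ we have the inclusion $S(m^{-n\beta/2},g)\subset S(1,g)$, with the seminorms in $S(1,g)$ controlled by those in $S(m^{-n\beta/2},g)$. Theorem \ref{thm.cont} with $M_1=M=1$ then gives that $\sigma(x,D):L^2(\mathbb{R}^n)\to L^2(\mathbb{R}^n)$ is bounded, again with operator norm controlled by $\|\sigma\|_{\ell,S(m^{-n\beta/2},g)}$ for some sufficiently large $\ell$.

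With the two endpoint estimates $H^1\to L^1$ and $L^2\to L^2$ in hand, the Fefferman--Stein real interpolation theorem supplies the bounds
\[
\sigma(x,D):L^p(\mathbb{R}^n)\longrightarrow L^p(\mathbb{R}^n),\qquad 1<p\leq 2,
\]
with operator norms that are log-convex combinations of the endpoint constants, hence bounded by a constant multiple of $\|\sigma\|_{\ell,S(m^{-n\beta/2},g)}$. For $2\leq p<\infty$ I would dualise once more: replacing $\sigma$ by $\overline{\sigma}$ (which lies in the same class with identical seminorms) and using $(L^{p'})^{*}=L^{p}$ transfers the result to the range $2\leq p<\infty$. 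Combining both ranges produces the stated inequality for all $1<p<\infty$.

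The only delicate step is the bookkeeping of the seminorm index $\ell$ through the real interpolation procedure; this is routine because the interpolation functor is of type $(\theta,q)$ and acts continuously on the scale of operator norms majorised by a fixed seminorm. I do not foresee any genuine obstacle beyond verifying that the seminorm that controls both endpoints can be taken uniformly as a single $\|\cdot\|_{\ell,S(m^{-n\beta/2},g)}$ for $\ell$ larger than the maximum of the orders required by Theorem \ref{parta} and by the Calder\'on--Vaillancourt-type bound implicit in Theorem \ref{thm.cont}.
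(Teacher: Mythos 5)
Your proposal is correct and is exactly the ``classical real interpolation'' argument the paper alludes to (the paper omits the proof): the $L^\infty$-$BMO$ endpoint from Theorem~\ref{parta}, the $L^2$ endpoint from the Weyl--H\"ormander calculus via the inclusion $S(m^{-\frac{n\beta}{2}},g)\subset S(1,g)$, duality through $H^1$-$BMO$, and interpolation. The paper's own proof of the companion Theorem~\ref{Lp} interpolates between $L^2$ and $BMO$ first and then dualises once to reach $1<p<2$, whereas you dualise first to $H^1\to L^1$, interpolate with $L^2$, and dualise again; these are mirror images of the same argument and buy nothing different.

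One imprecision worth fixing: the paper's $\textnormal{Op}(\cdot)$ is the Kohn--Nirenberg $(\tau=1)$-quantisation, so the formal adjoint of $\sigma(x,D)$ is \emph{not} $\overline{\sigma}(x,D)$; that identity holds only for the Weyl quantisation, and you are conflating the two when you write $\overline{\sigma}^w(x,D)$ and then apply Theorem~\ref{parta}, which is stated for the Kohn--Nirenberg quantisation. The repair is routine: the adjoint is the $\tau=0$-quantisation of $\overline{\sigma}$, which by the $J_t$-semigroup change of quantisation is a Kohn--Nirenberg operator whose symbol still lies in $S(m^{-\frac{n\beta}{2}},g)$ with seminorms controlled by those of $\sigma$ (possibly at a higher seminorm index), so Theorem~\ref{parta} applies to it and the duality step goes through. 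This is the same kind of seminorm bookkeeping you already flag at the end, and it does not affect the conclusion.
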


Regarding the smaller exponents $\varepsilon_0$ below, by applying the Fefferman-Stein interpolation we obtain the following bounds:
\begin{theorem}\label{Lp} Let $\varepsilon_0=\frac{3}{4}$ and  $0\leq \beta<\varepsilon_0$ and let $\sigma(x,D)\in \textnormal{Op}(S(m^{- \frn\beta},g)).$ Then, $\sigma(x,D):L^p(\mathbb{R}^n)\rightarrow L^p(\mathbb{R}^n)$ extends to a bounded operator, provided
\begin{equation}
    \left| \frac{1}{p}-\frac{1}{2}\right|\leq \frn\beta.
\end{equation}
Moreover, for such $p$ and all $\ell>\frac{n}{4}$ we have
\begin{equation*}
    \Vert \sigma(x,D)f\Vert_{L^p(\mathbb{R}^n)}\leq C\Vert \sigma(x,\xi)\Vert_{\ell,S(m^{-n\beta/2},g)}\Vert f\Vert_{{L^p(\mathbb{R}^n)}},
\end{equation*}where the constant $C>0,$ is independent of  $f\in L^p(\mathbb{R}^n).$
\end{theorem}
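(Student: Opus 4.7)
The plan is to derive Theorem \ref{Lp} from Fefferman--Stein complex interpolation, taking as the two endpoints the $L^{\infty}$--$BMO$ estimate of Theorem \ref{parta} at the critical index $\beta=\varepsilon_{0}=\tfrac{3}{4}$ and the $L^{2}$--$L^{2}$ estimate that follows for free from Theorem \ref{thm.cont}, since $m\geq 1$ forces the inclusion $S(m^{-\frn\beta},g)\subset S(1,g)$ whenever $\beta\geq 0$. The case $1<p<2$ will be reduced to $p>2$ via the Weyl adjoint $(\sigma^{w})^{\ast}=\bar\sigma^{w}$, which preserves the class $S(m^{-\frn\beta},g)$.

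Concretely, fix $\sigma\in S(m^{-\frn\beta},g)$ and an exponent $p>2$ in the admissible range, and set $\theta:=2\bigl(\tfrac{1}{2}-\tfrac{1}{p}\bigr)\in (0,1)$. I would then introduce the analytic family of symbols
\[
\sigma_{z}(x,\xi):=e^{z^{2}-\theta^{2}}\,m(x,\xi)^{(z-\theta)\alpha}\,\sigma(x,\xi),\qquad 0\leq \mathrm{Re}\,z\leq 1,
\]
where $\alpha<0$ is chosen so that on the right boundary the symbol $\sigma_{1+iy}$ lies uniformly in $y\in\mathbb{R}$ in $S(m^{-3n/8},g)$ (yielding an $L^{\infty}$--$BMO$ bound via Theorem \ref{parta} applied at $\beta=3/4$), while on the left boundary $\sigma_{iy}$ lies uniformly in $S(1,g)$ (yielding an $L^{2}$ bound via Theorem \ref{thm.cont}). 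By construction $\sigma_{\theta}=\sigma$, so that interpolation at the interior point $z=\theta$ produces the target $L^{p}$ estimate with constant controlled by a fixed $S(m^{-\frn\beta},g)$--seminorm of $\sigma$.

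The central technical step is to verify that this family is \emph{admissible} for Stein's theorem. Writing
\[
m^{(z-\theta)\alpha}=m^{(\mathrm{Re}\,z-\theta)\alpha}\exp\bigl(i\alpha\,\mathrm{Im}(z)\log m\bigr),
\]
the iterated $(x,\xi)$--derivatives of the oscillating phase $\mathrm{Im}(z)\log m$ must be controlled via the $g$--continuity of $m$ and Lemma \ref{class:smg}, so that the $S(1,g)$--seminorms of $\sigma_{iy}$ and the $S(m^{-3n/8},g)$--seminorms of $\sigma_{1+iy}$ grow at most polynomially in $|\mathrm{Im}\,z|$; the Gaussian factor $|e^{z^{2}-\theta^{2}}|=e^{(\mathrm{Re}\,z)^{2}-\theta^{2}-(\mathrm{Im}\,z)^{2}}$ then absorbs this polynomial growth, giving the subexponential boundary control Stein's theorem requires. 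This is the main obstacle, since $m$ is tied to a possibly very degenerate symbol $a_{2}$ and the chain rule for $\log m$ has to be handled through the Hörmander metric framework rather than via ordinary Kohn--Nirenberg estimates.

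Once admissibility is established, applying the Fefferman--Stein form of complex interpolation between $L^{2}$ and $BMO$ to the operator--valued analytic family $\{\sigma_{z}^{w}(x,D)\}$ yields a bound for $\sigma(x,D)=\sigma_{\theta}^{w}(x,D)$ on $L^{p}$ with $\tfrac{1}{p}=\tfrac{1-\theta}{2}$. The seminorm order $\ell>\tfrac{n}{4}$ is inherited from Theorem \ref{parta} and Lemma \ref{lem:l}, and tracking the dependence of the parameter $\alpha$ on $\theta$ through the boundary conditions recovers the admissible range $\bigl|\tfrac{1}{p}-\tfrac{1}{2}\bigr|\leq \frn\beta$, finishing the argument together with the duality step for $1<p<2$.
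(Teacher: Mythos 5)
Your proposal follows essentially the same route as the paper's proof: Fefferman--Stein complex interpolation applied to an analytic family of the form $\sigma_{z}=e^{\cdot}\,\sigma\,m^{\text{linear in }z}$, with the $L^{\infty}$--$BMO$ endpoint supplied by Theorem~\ref{parta}, the $L^{2}$ endpoint supplied by the inclusion $S(m^{-\frn\beta},g)\subset S(1,g)$ together with Theorem~\ref{thm.cont}, and duality for $1<p<2$. The only departures are cosmetic: you place the $BMO$ estimate on the boundary $\mathrm{Re}\,z=1$ instead of $z=0$, and you fold the normalization into $\sigma_{z}$ so that $\sigma_{\theta}=\sigma$ exactly, whereas the paper obtains $\sigma=e^{-t_{0}^{2}}\sigma_{t_{0}}$ at the interior parameter; you are also somewhat more explicit than the paper about verifying Stein's admissibility condition by controlling the derivatives of $\mathrm{Im}(z)\log m$ through Lemma~\ref{class:smg}, a point the paper handles by citing \cite{Delgado2006}.

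One caution: solving your two boundary conditions forces $\theta=\tfrac{4\beta}{3}$, hence $\bigl|\tfrac{1}{p}-\tfrac{1}{2}\bigr|=\tfrac{\theta}{2}=\tfrac{2\beta}{3}$ at the interior point, not $\frn\beta$; so the closing claim that tracking $\alpha$ ``recovers'' $\bigl|\tfrac{1}{p}-\tfrac{1}{2}\bigr|\leq\frn\beta$ does not actually follow from the interpolation you set up. The paper's own proof yields the same $\tfrac{2\beta}{3}$, so this is a discrepancy between the stated range and what the interpolation produces that you share with the paper, rather than a flaw peculiar to your argument.
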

\begin{proof}
Now, having proved Theorem \ref{parta}, the proof is verbatim  the proof of Theorem 2.10 of \cite{Delgado2006}. We will present the argument here, for completeness of our paper. We will use the complex Fefferman-Stein interpolation theorem.  Let us consider the complex family of operators indexed by $z\in \mathbb{C},$ $ \mathfrak{Re}(z)\in [0,1],$ given by
\begin{equation*}
    T_{z}:=\textnormal{Op}(\sigma_{z}),\,\,\,\, \sigma_{z}(x,\xi):=e^{z^2}\sigma(x,\xi)m(x,\xi)^{ \frac{n\beta}{2}+{\frac{n}{2}(\frac{3}{4})(z-1)}}.
\end{equation*} The family of operators $\{T_{z}\},$ defines an analytic family of operators from $ \mathfrak{Re}(z)\in (0,1),$ (resp. continuous for $ \mathfrak{Re}(z)\in [0,1]$), into the algebra of bounded operators on $L^2(\mathbb{R}^n).$  Let us observe that $\sigma_0(x,\xi)=\sigma(x,\xi)m(x,\xi)^{{\frac{n\beta}{2}-{\frac{n}{2}(\frac{3}{4})} }},$ and $\sigma_{1}(x,\xi)=e\sigma(x,\xi)m(x,\xi)^{{\frac{n\beta}{2}}}.$ Since $T_0$ is bounded from $L^\infty(\mathbb{R}^n)$ into ${BMO}(\mathbb{R}^n)$ and $T_1$ is bounded on $L^2(\mathbb{R}^n),$ the Fefferman-Stein interpolation theorem implies that $T_t$ extends to a bounded operator on $L^p(\mathbb{R}^n),$ for $p=\frac{2}{t}$ and all $0<t\leqslant 1.$ Because $0\leqslant \beta<\frac{3}{4}, $ there exist $t_0\in (0,1)$ such that $\beta=\frac{3}{4}(1-t_{0}).$ So, $T_{t_0}=e^{t_0^2}A$ extends to a bounded operator on $L^{\frac{2}{t_0}}.$ The Fefferman-Stein interpolation theorem, the $L^2(\mathbb{R}^n)$-boundedness and the  $L^{\frac{2}{t_0}}$-boundedness  of $A$ give the $L^p(\mathbb{R}^n)$-boundedness of $A$ for all $2\leqslant p\leqslant \frac{2}{t_0},$ and interpolating the $L^{\frac{2}{t_0}}(\mathbb{R}^n)$-boundedness with the $L^\infty(\mathbb{R}^n)$-${BMO}(\mathbb{R}^n)$ boundedness of $A$ we obtain the boundedneess of $A$ on $L^p(\mathbb{R}^n)$ for all $\frac{2}{t_0}\leqslant p<\infty.$ So, $A$ extends to a bounded operator on $L^p(\mathbb{R}^n)$ for all $2\leqslant p<\infty.$ The $L^p(\mathbb{R}^n)$-boundedness of $A$ for $1<p\leqslant 2$ now follows by the duality argument.
\end{proof}
The exponent $\varepsilon_0$ in Lemma \ref{lem:l} can be improved if we dispose of more information on $a_2$. 
 For instance, if  $L$ is a second order differential operator, formally self-adjoint,
\begin{equation}\label{opl}
Lf=-\sum a_{ij}(x)\frac{\partial^2}{\partial x_{i} \partial x_{j}}f+ \mbox{ lower order terms}, \quad\quad f\in C_{0}^\infty (\mathbb{R}^n), 
\end{equation}
with the conditions as in \eqref{oprelss} and for any $x\in\mathbb{R}^n$, the matrix $a_{i,j}(x)$ has rank $r(x)\geq r_0$. 
 An important special case of the situation above arises  from the sum of squares
$$ L=-\sum\limits_{j=1}^MX_j^*X_j,$$  where $X_j$ are real vector fields on $\mathbb{R}^n$, provided that the linear span of the family of vector fields $\{X_j(x)\}_{1\leq j\leq M}$ is at least $r_0$-dimensional, $r_0\geq 1$, for each $x$. \\

We give an example on $\Ra^2$ in the case of the operator 
\begin{equation}\label{df5}a(x,D):=-(\partial_{x_{1}}^{2}+\tilde{x}_{1}^{2}\partial_{x_{2}}^{2})+|x|^{2}.\end{equation}
The exponent $\varepsilon_0=\frac{3}{4}$ can be improved by replacing it by $\frac{3}{2}$ in   Lemma \ref{lem:l}, and as a consequence improving it  as well in Theorems \ref{parta}, \ref{Lpreal} and \ref {Lp}.  \\

We now state the  improvement in Lemma \ref{lem:l} for the classes corresponding to the operator \eqref{df5}. More precisely, a detailed look at the proof of Lemma \ref{lem:l} allows one to improve the parameter $\varepsilon_0$ from $3/4$ until $5/12$ as follows.
\begin{lemma}\label{lem:lbg} Let us consider the metric $g$ given in \eqref{gen.metric}  with  $a_2(x,\xi)=\xi_{1}^{2}+\tilde{x_1}^{2}\xi_{2}^{2},$  $(x,\xi)\in\Ra^2\times\Ra^2$  being the symbol of the degenerate elliptic operator $$ a_2(x,D):=-(\partial_{x_{1}}^{2}+\tilde{x}_{1}^{2}\partial_{x_{2}}^{2}).$$ 
Let us consider the $g$-weight $m(X)=\xi_{1}^{2}+\tilde{x_1}^{2}\xi_{2}^{2}+|x|^2+\jpxe\in S(m,g).$ Let $\varepsilon_0=\frac{5}{12}$ and  $\varepsilon_0\leq\varepsilon<1.$  Consider a symbol $q\in S(m^{-\varepsilon },g)$ supported in 
    $R\leq a(x,\xi)+\jpxe\leq 3R $ for $R>1 $. Then, for all $l>1 $ we have 
\[\lvert | q(x,D)f \rvert|_{\infty}\leq C \lvert |q\rvert |_{l; S(m^{-\varepsilon },g)}  \lvert |f\rvert |_{L^\infty}, \] 
where the constant $C$ is independent of $q$ and of $f$.
\end{lemma}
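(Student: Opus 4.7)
The approach is to imitate, step by step, the proof of Lemma~\ref{lem:l}, but to exploit the particular non-isotropic shape of the support of $q$ in the $\xi$--variable induced by the specific degenerate symbol $a_{2}(x,\xi)=\xi_{1}^{2}+\tilde{x}_{1}^{2}\xi_{2}^{2}$. First I would write
\[
q(x,D)f(x)=\hat{q}_{x}\ast f(x),\qquad |q(x,D)f(x)|\leq \|\hat{q}_{x}\|_{L^{1}}\|f\|_{L^{\infty}},
\]
where $\hat{q}_{x}(y)$ denotes the Fourier transform of $q(x,\xi)$ in the variable $\xi$, so it suffices to establish $\|\hat{q}_{x}\|_{L^{1}}\leq C\|q\|_{l,S(m^{-\varepsilon},g)}$ uniformly in $x\in\mathbb{R}^{2}$ and $R>1$. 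I would then split $\|\hat{q}_{x}\|_{L^{1}}=\int_{|y|<b}+\int_{|y|\geq b}$ with a parameter $b=b(R)$ to be chosen, apply Cauchy--Schwarz with the Lebesgue measure on the near part and with the weight $|y|^{l}$ on the far part, and invoke Plancherel to pass to $L^{2}$-estimates of $\partial_{\xi}^{\alpha}q(x,\cdot)$ with $|\alpha|\in\{0,l\}$. The hypothesis $l>1=n/2$ (with $n=2$) is exactly what is needed to ensure $\int_{|y|\geq b}|y|^{-2l}\,dy<\infty$, which explains why the integer of derivatives required in Lemma~\ref{lem:lbg} is larger than in Lemma~\ref{lem:l}.

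The novelty, and the mechanism for upgrading the critical exponent from $3/4$ to $5/12$, lies in the sharper estimate of the $\xi$-integrals
\[
\int_{\operatorname{supp}_{\xi}(x)}m(x,\xi)^{-2\varepsilon-|\alpha|}\,d\xi.
\]
For fixed $x\in\mathbb{R}^{2}$ the constraint $R\leq a_{2}(x,\xi)+|x|^{2}+\langle X\rangle\leq 3R$ forces
\[
|\xi_{1}|\leq \sqrt{3R},\qquad |\xi_{2}|\leq \min\!\bigl(\sqrt{3R}/|\tilde{x}_{1}|,\,3R\bigr),
\]
so the $\xi$-section of the support has Lebesgue measure at most $C\min(R/|\tilde{x}_{1}|,R^{3/2})\leq CR^{3/2}$, the worst case $R^{3/2}$ being realised precisely when $|\tilde{x}_{1}|\lesssim R^{-1/2}$. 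Combined with the pointwise lower bound $m(x,\xi)\geq R$ on the support this yields the anisotropic volume estimate
\[
\int m^{-2\varepsilon-|\alpha|}\,d\xi\leq CR^{3/2-2\varepsilon-|\alpha|},
\]
which quantitatively captures the $\sqrt{R}\times R$ shape of the support in $\xi$-space, in contrast with the isotropic $\sqrt{R}\times\sqrt{R}$ shape relevant for Lemma~\ref{lem:l}.

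From here I would proceed exactly as in Lemma~\ref{lem:l}, distinguishing the cases $|x|^{2}\leq R/(2C)$ and $|x|^{2}>R/(2C)$. In the first case one uses the volume bound above together with $m\geq R$ to estimate both the near and far contributions. In the second case the extra lower bound $\langle X\rangle\geq cR^{1/2}$ lets one factor out $m^{-l}\leq\langle X\rangle^{-l}\leq c^{-l}R^{-l/2}$ before integrating, producing a further gain in $R$. Balancing the cut-off $b(R)$ against both terms of the decomposition and optimising in each case yields $\|\hat{q}_{x}\|_{L^{1}}\leq C\|q\|_{l,S(m^{-\varepsilon},g)}$ uniformly in $R>1$, valid precisely for $\varepsilon\geq\varepsilon_{0}=5/12$.

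The hardest part I anticipate is the bookkeeping in the final optimisation: the interplay between the volume estimate $R^{3/2}$ in the small-$x$ regime and the extra $\langle X\rangle^{-l}$ gain in the large-$x$ regime must be choreographed with a careful choice of $b=b(R)$ in order to land on the sharp exponent $5/12$ rather than the naïve value produced by the simplest balance $b=R^{-1/2}$. In particular, one must take some care to verify that the non-isotropic support of $q(x,\cdot)$ in $\xi$ is genuinely responsible for the sharpening, and that the bound $V_{R}(x)\leq CR^{3/2}$ cannot be improved further in the critical regime $|\tilde{x}_{1}|\lesssim R^{-1/2}$ where both the $a_{2}$-constraint and the $\langle X\rangle$-constraint are active simultaneously.
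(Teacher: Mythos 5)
Your proposal takes the same high-level route as the paper -- convolution representation, a near/far Cauchy--Schwarz split with Plancherel, and exploiting the anisotropic shape of the $\xi$-support together with a case split on $|x|$ -- and you correctly identify the driving observation, namely the anisotropic section-volume bound $V_{R}(x)\lesssim R^{3/2}$. There is a genuine technical difference worth flagging, though. Where you propose to use the crude but clean combination "$m\sim R$ on the support" plus "$V_{R}(x)\lesssim R^{3/2}$", the paper instead enlarges the $\xi$-integration domain to the unbounded set $\{R\lesssim\xi_{1}^{2}+|\xi_{2}|\}$ and applies the anisotropic change of variables $(\xi_{1},\xi_{2})=(\sqrt{R}u_{1},Ru_{2}^{2})$, which produces the same Jacobian factor $R^{3/2}$ but also forces the integrability constraint $k>3-2\varepsilon$ on the number $k$ of $\xi$-derivatives, and this is precisely what drives the paper to take $k=3$ and hence $\varepsilon_{0}=5/12$ (via the balance $k(\tfrac12-\varepsilon)-\tfrac14\leq 0$). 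Your mechanism carries no such constraint: on a set of finite measure $\lesssim R^{3/2}$ with $m\sim R$ uniformly, the only restriction on $k$ is the Cauchy--Schwarz requirement $k>n/2=1$, so $k=2$ is admissible, and the very same exponent balance then gives $\varepsilon\geq 3/8$, which is strictly better than $5/12$. Since you assert without computation that your optimisation "lands precisely on $5/12$", you have either implicitly imported the paper's $k=3$ without a reason, or your bookkeeping would actually overshoot the stated lemma; you should carry the far-part exponent $(\varepsilon-1)(1-k)+\tfrac34-\varepsilon-\tfrac{k}{2}=k(\tfrac12-\varepsilon)-\tfrac14$ through explicitly to see where your version of the argument really stops. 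Finally, your narrative that the $\langle X\rangle^{-l}$ gain in the large-$x$ regime is what must be "choreographed" against the $R^{3/2}$ volume count is somewhat misleading: once $m\sim R$ on the support is used, the $\langle X\rangle$-factor gives nothing beyond what $m^{-k}\leq R^{-k}$ already gives, and the paper's two cases in fact produce identical exponent balances -- the case split is only needed to justify the integrability of the two auxiliary integrals $I$ and $J$ in the change-of-variables argument, a step your direct volume count renders unnecessary.
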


\begin{proof}  
 Wee proceed like in Lemma \ref{lem:l}. 
Let $q\in S(m^{-\varepsilon },g)$ be supported in 
 \[\{(x,\xi)\in \mathbb{R}^2\times \mathbb{R}^2 \, : R\leq a(x,\xi)+\jpxe\leq 3R \}, \]
 for 
    $R>1 $ fixed.
 It is then enough to prove that for all $x\in\mathbb{R}^2$ and $ l>1,$
\[ \vert|\hat{q}_x \rvert|_{L^1}\leq C \lvert|q\rvert|_{l; S(m^{-\varepsilon },g)}.\]

\indent Let us consider  $b=R^{\varepsilon-1}.$   The  Cauchy-Schwarz inequality gives the estimate
\[\int\limits_{|y|<b}|\hat{q}_x(y)|dy\leq C\vert|q\rvert|_{l;S(m^{-\varepsilon },g)}.\]
On the other hand, for the integral $\int\limits_{|y|\geq b}|\hat{q}_x(y)|dy ,$ we first note that for an integer $k>1$,  we have that
\begin{eqnarray*}
\int\limits_{|y|\geq b}|\hat{q}_x(y)|dy \leq & Cb^{1-k }\left(\,\int\limits_{y\geq b}|y|^{2k} |\hat{q}_x(y)|^{2}dy\right)^{\half}\\
\leq & Cb^{1 -k }\left(\,\int\limits_{\mathbb{R}^2}|\nabla_{\xi}^k q(x,\xi)|^{2}d\xi\right)^{\half}\\
\leq & C\lvert|q\rvert|_{k;S(m^{-\varepsilon },g)}b^{1 -k }\left(\,\int\limits_{R\leq   a_2(x,\xi) +|x|^2+\jpxe\leq 3R}m^{-2\varepsilon -k}(x,\xi)d\xi\right)^{\half}
.\end{eqnarray*}
In order to estimate the last integral we split into two cases
:
$|x|^2\leq \frac{R}{2C}$ and $|x|^2>\frac{R}{2C}$. \\

If $
|x|^2\leq \frac{R}{2C}$, we get \[R\leq   a_2(x,\xi) +|x|^2+\jpxe\lesssim C(\xi_1^2+ |\xi_2| +|x|^2).\]
Since in the subelliptic zone $R\lesssim \xi_1^2+ |\xi_2|,$ we have that
$$ \xi_1^2+ |\xi_2|\lesssim a_2(x,\xi) +|x|^2+\jpxe=m(x,\xi), $$
we can compare
$$\int\limits_{R\leq   a_2(x,\xi) +|x|^2+\jpxe\leq 3R}m^{-2\varepsilon -k}(x,\xi)d\xi\lesssim \int\limits_{ \substack{  R\lesssim \xi_1^2+ \xi_2 \\ \xi_1,\xi_2>0}}( \xi_1^2+ |\xi_2|)^{-2\varepsilon -k}d\xi.$$ 
By making the changes of variables
$$(\xi_1,\xi_2)=(\sqrt{R}u_1, Ru_2^2),\,\,u_1,u_2>0,$$ we can estimate 
$$   \int\limits_{ \substack{  R\lesssim \xi_1^2+ \xi_2 \\ \xi_1,\xi_2>0}}( \xi_1^2+ |\xi_2|)^{-2\varepsilon -k}d\xi\lesssim\int\limits_{ \substack{  1\lesssim u_1^2+ u_2^2 \\ u_1,u_2>0}}( u_1^2+ u_2^2)^{-2\varepsilon -k}R^{-2\varepsilon -k}R^{\frac{3}{2}}u_2du_1du_2=R^{-2\varepsilon -k+\frac{3}{2}} I,$$
where the integral 
\begin{align}\label{The:integral:parametrised}
 I=\int\limits_{ \substack{  1\lesssim u_1^2+ u_2^2 \\ u_1,u_2>0}}( u_1^2+ u_2^2)^{-2\varepsilon -k}u_2du_1du_2<\infty    
\end{align}
converges since when $k>3-2\varepsilon.$ So, putting all the previous estimates together we deduce that
$$  \int\limits_{|y|\geq b}|\hat{q}_x(y)|dy\lesssim R^{(\varepsilon-1)(1-k)-\varepsilon -\frac{k}{2}+\frac{3}{4}}\leq C. $$
To see the last estimate note that $R>1$ and that
\begin{equation}
  (\varepsilon-1)(1-k)-\varepsilon -\frac{k}{2}+\frac{3}{4}=k(\frac{1}{2}-\varepsilon)-\frac{1}{4}\leq 0, 
\end{equation} if for example $k$ is the sharp parameter in the convergence of $I$ in \eqref{The:integral:parametrised}, namely, $k=3,$ from which the inequality $$ 3(\frac{1}{2}-\varepsilon)-\frac{1}{4}\leq 0,$$ from then we require that $\varepsilon\geq 5/12.$\\

On the other hand, if $|x|^2>\frac{R}{2C}$, we have
$-|x|^2<-\frac{R}{2C}$ and 
\[\xi_1^2+|\xi_2|\leq   a_2(x,\xi) +\jp\leq  3R-|x|^2\leq 3R-\frac{R}{2C}=\frac{(6C-1)}{2C}R. \]
Since $6C-1>0$ and by using the fact that $|x|^2>\frac{R}{2C}$, we have $\jpx^{-k}\leq C'R^{-\frac{k}{2}}$ and  we  obtain
$$ b^{1 -k }\left(\,\int\limits_{R\leq   a_2(x,\xi) +|x|^2+\jpxe\leq 3R}m^{-2\varepsilon -k}(x,\xi)d\xi\right)^{\half} =b^{1 -k } \left(\,\int\limits_{\xi_1^2+|\xi_2|\leq C'R}m^{-2\varepsilon}(x,\xi)m^{-k}(x,\xi)d\xi\right)^{\half}  $$

$$  \leq b^{\frn -k } \left(\int\limits_{\xi_1^2+|\xi_2|\leq C'R}(\xi_1^2+|\xi_2|)^{-2\varepsilon}\langle x\rangle^{-2k}d\xi\right)^{\half}$$
$$  \leq R^{(\varepsilon-1)(1-k) -\frac{k}{2}} \left(\int\limits_{\xi_1^2+|\xi_2|\leq C'R}(\xi_1^2+|\xi_2|)^{-2\varepsilon}d\xi\right)^{\half}$$
$$\lesssim R^{(\varepsilon-1)(1-k) -\frac{k}{2}}\left(\int\limits_{ \substack{   u_1^2+ u_2^2 \lesssim 1 \\ u_1,u_2>0}}( u_1^2+ u_2^2)^{-2\varepsilon }R^{-2\varepsilon }R^{\frac{3}{2}}u_2du_1du_2\right)^{\half}$$
$$ =R^{(\varepsilon-1)(1-k) -\frac{k}{2}}  R^{-\varepsilon +\frac{3}{4}} \sqrt{J}, $$
where 
$$ J=\int\limits_{ \substack{   u_1^2+ u_2^2 \lesssim 1 \\ u_1,u_2>0}} ( u_1^2+ u_2^2)^{-2\varepsilon }u_2du_1du_2\lesssim \int\limits_{ \substack{   u_1^2+ u_2^2 \lesssim 1 \\ u_1,u_2>0}} ( u_1^2+ u_2^2)^{-2\varepsilon }du_1du_2$$ 
is convergent for any $\varepsilon$ such that $2\varepsilon<2,$ that is when $\varepsilon<1.$ So, in the case where $|x|^2>\frac{R}{2C},$ this analysis shows that
$$  \int\limits_{|y|\geq b}|\hat{q}_x(y)|dy\lesssim R^{(\varepsilon-1)(1-k) -\frac{k}{2}}  R^{-\varepsilon +\frac{3}{4}}\leq C, $$
when e.g. $k=3,$ in view of the inequality
$$   (\varepsilon-1)(1-k) -\frac{k}{2} -\varepsilon +\frac{3}{4}=3(\frac{1}{2}-\varepsilon)-\frac{1}{4}\leq 0$$
for any $\varepsilon\geq 5/12.$
The proof of the lemma is complete.
\end{proof}
Regarding the improved exponent below $\varepsilon_0$, by applying the Fefferman-Stein interpolation we obtain the following bounds. Since its proof follows the same arguments as the ones in the proof of Theorem \ref{Lp} we omit it.
\begin{theorem}\label{Lp} Let $\varepsilon_0=\frac{5}{12}$ and  $0\leq \beta<\varepsilon_0$ and let $\sigma(x,D)\in \textnormal{Op}(S(m^{- \beta},g)).$ Then, $\sigma(x,D):L^p(\mathbb{R}^2)\rightarrow L^p(\mathbb{R}^2)$ extends to a bounded operator, provided
\begin{equation}
    \left| \frac{1}{p}-\frac{1}{2}\right|\leq \beta.
\end{equation}
Moreover, for such $p$ we have
\begin{equation*}
    \Vert \sigma(x,D)f\Vert_{L^p(\mathbb{R}^2)}\leq C\Vert \sigma(x,\xi)\Vert_{3,S(m^{-\beta},g)}\Vert f\Vert_{{L^p(\mathbb{R}^2)}},
\end{equation*}where the constant $C>0,$ is independent of  $f\in L^p(\mathbb{R}^2).$
\end{theorem}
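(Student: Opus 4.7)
The plan is to transport the proof of the earlier Theorem~\ref{Lp} (the one with $\varepsilon_{0}=\tfrac{3}{4}$) to the present setting, replacing every appeal to Lemma~\ref{lem:l} by an appeal to the sharpened Lemma~\ref{lem:lbg} available in the specific two-dimensional degenerate case. Before the interpolation step can even be set up, I would first need an analogue of Theorem~\ref{parta} with the improved exponent: for $\tfrac{5}{12}\leq\beta<1$ and $\sigma(x,D)\in\mathrm{Op}(S(m^{-\beta},g))$ on $\mathbb{R}^{2}$, one has $\sigma(x,D):L^{\infty}(\mathbb{R}^{2})\to\mathrm{BMO}(\mathbb{R}^{2})$ boundedly, with a quantitative control by $\|\sigma\|_{3,S(m^{-\beta},g)}$. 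Its proof would repeat verbatim the decomposition $\sigma=\sigma^{0}+\sigma^{1}$ based on the cutoff $\tilde{\gamma}(x,\xi)=\gamma(r\,m(x,\xi))$, the mean-value bound for the low-frequency piece $A^{0}$, and the $L^{2}$-commutator trick for the high-frequency piece $A^{1}$; the only modification is that the dyadic pieces $\rho_{j,k}$ in the partition-of-unity bound
\[
\|\mathrm{Op}(\rho_{j,k})\|_{\mathscr{B}(L^{\infty}(\mathbb{R}^{2}))}\lesssim r^{-1}2^{-j}\|\sigma\|_{3,S(m^{-\beta},g)}
\]
are now estimated via Lemma~\ref{lem:lbg} (requiring $\ell>1$, matching the $\|\cdot\|_{3}$ seminorm).

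Once this endpoint is in hand, I set up the analytic family
\[
T_{z}:=\mathrm{Op}(\sigma_{z}),\qquad \sigma_{z}(x,\xi):=e^{z^{2}}\sigma(x,\xi)\,m(x,\xi)^{\beta+\frac{5}{12}(z-1)},\qquad \mathfrak{Re}(z)\in[0,1].
\]
Along $\mathfrak{Re}(z)=1$ the symbol $\sigma_{z}$ lives in $S(1,g)$, so $T_{z}$ is uniformly bounded on $L^{2}(\mathbb{R}^{2})$ by Theorem~\ref{thm.cont}; along $\mathfrak{Re}(z)=0$ the symbol lives in $S(m^{-5/12},g)$, so $T_{z}$ is uniformly bounded from $L^{\infty}(\mathbb{R}^{2})$ into $\mathrm{BMO}(\mathbb{R}^{2})$ by the endpoint just established. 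The gaussian factor $e^{z^{2}}$ guarantees that the analyticity/admissibility hypotheses of the Fefferman--Stein complex interpolation theorem are met.

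Applying the Fefferman--Stein theorem, for $t\in(0,1)$ the operator $T_{t}$ extends to a bounded operator on $L^{2/t}(\mathbb{R}^{2})$. Choosing $t_{0}=1-\tfrac{12\beta}{5}\in(0,1]$ (which is legitimate because $0\leq\beta<\tfrac{5}{12}$) makes the exponent of $m$ in $\sigma_{t_{0}}$ vanish, so $T_{t_{0}}=e^{t_{0}^{2}}\sigma(x,D)$ is bounded on $L^{p_{0}}(\mathbb{R}^{2})$ with $p_{0}=\tfrac{10}{5-12\beta}$; this corresponds to $|\tfrac{1}{p_{0}}-\tfrac{1}{2}|=\tfrac{6\beta}{5}$. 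Further interpolating with the $L^{2}$-boundedness of $\sigma(x,D)$ (available since $S(m^{-\beta},g)\subset S(1,g)$) covers the range $p\in[2,p_{0}]$, and duality yields the range $p\in[p_{0}',2]$. Since $|\tfrac{1}{p}-\tfrac{1}{2}|\leq\beta$ is strictly contained in $|\tfrac{1}{p}-\tfrac{1}{2}|\leq\tfrac{6\beta}{5}$, the claimed range is recovered, with the seminorm $\|\sigma\|_{3,S(m^{-\beta},g)}$ inherited from the endpoint estimate.

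The main technical obstacle is the first step: re-running the proof of Theorem~\ref{parta} with the weaker regularity parameter $\ell>1$ coming from Lemma~\ref{lem:lbg} (as opposed to $\ell>n/4$ in Lemma~\ref{lem:l}), which in $n=2$ happen to coincide, so no genuine new difficulty arises there. The more delicate point is checking that the commutator analysis of $[M_{\phi},A^{1}]$ via a Taylor expansion of $\phi$ continues to yield the required control in $S(m^{-\beta},g)$ with the exponent $\beta$ now restricted by $\beta<\tfrac{5}{12}$ rather than $\beta<\tfrac{3}{4}$; this is purely a bookkeeping check, and the same Weyl--H\"ormander composition formula and Sobolev embedding $L^{2}\hookrightarrow H(m^{-\beta\varepsilon},g)$ close the argument exactly as before.
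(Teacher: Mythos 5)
Your proposal matches the paper's intended argument exactly: the paper explicitly states that this theorem's proof is omitted because it ``follows the same arguments as the ones in the proof of Theorem~\ref{Lp}'' (the $\varepsilon_0=\tfrac{3}{4}$ version), and you carry out precisely that transplantation, replacing Lemma~\ref{lem:l} by Lemma~\ref{lem:lbg} to obtain the $L^{\infty}$--$BMO$ endpoint for the two-dimensional degenerate operator and then running the Fefferman--Stein analytic family with the modified exponent $\beta+\tfrac{5}{12}(z-1)$. One small remark: your computation of $t_0=1-\tfrac{12\beta}{5}$ and the resulting $|\tfrac{1}{p_0}-\tfrac{1}{2}|=\tfrac{6\beta}{5}$ actually gives a slightly larger admissible range than the stated $|\tfrac{1}{p}-\tfrac{1}{2}|\leq\beta$, so the claim is covered with room to spare (also, the thresholds $\ell>n/4=\tfrac12$ from Lemma~\ref{lem:l} and $\ell>1$ from Lemma~\ref{lem:lbg} do not literally coincide in $n=2$ as you suggest, but this is immaterial since the seminorm index used is $\ell=3$).
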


\section{Improved $L^p$ estimates and Schatten properties for Schr\"odinger operators for H\"ormander sums of squares}\label{improvedsec}
In this section we are going to improve the estimates obtained in Lemma \ref{lem:l} for the case of sums of squares $\sum_{j=1}^kX_j^2$ of vector fields $X_j$ satisfying the H\"ormander condition of order $2$. This means that the vector fields $X_j$ and their commutators $[X_i,X_j]$ span $\mathbb{R}^{n}.$ As a consequence all the corresponding $L^p$ estimates for the classes $S(m^{-\frac{n\theta}{2}},g)$ will also be improved.  Also in this case, a sharp index $\mu_0$ that allows the membership of the operators $S(m^{-\mu},g)$ to the Schatten von Neumann classes $S_{r}(L^2(\mathbb{R}^n)),$ when $\mu>\mu_0$ is given.

\begin{remark}\label{Subelliptic:remark} For the next  properties we follow Nagel, Stein and Wainger \cite{NagelSteinWainger}, see also Jerison and S\'anchez-Calle \cite{JerisonSanchezCalle}. Let $X_1,X_2,\cdots, X_k,$ be a system of vector fields satisfying the H\"ormander condition of order 2. Then, we can endow $\mathbb{R}^n$ with a nilpotent structure as follows. If $Y_1,\cdots, Y_n,$ is an enumeration of the vectors  $X_1,X_2,\cdots, X_k,$ and of their commutators $[X_i,X_j],$ define $d_j=\textnormal{deg}(Y_j)=1,$ if $Y_j$ is one of the vector-fields $X_{i},$ and define $d_j(Y_j)=2,$ if $Y_j$ is a commutator. Let  $Q=\sum_{j}d_j$ be the homogeneous dimension associated the  Carnot-Carath\'eodory  distance $|\cdot|_{cc}$ associated with the H\"ormander  system $\{X_1,X_2,\cdots, X_k\}.$ For every $j,$ let $X_{j}(x,\xi)$ be the principal symbol of the vector-field $X_j,$ $1\leq j\leq k.$ Then:
\begin{itemize} 
\item The function
    $(x,\xi)\mapsto  
    a_2(x,\xi)=\sum_{j=1}^kX_{j}(x,\xi)^2
$ is the principal symbol of $a_{2}(x,D).$
\item For any $x\in \mathbb{R}^n,$ there exists a positive-definite matrix $(a_{i,j}(x))_{i,j=1}^n$ in such a way that
$$ a_2(x,\xi)=\sum_{j=1}^kX_{j}(x,\xi)^2=\sum_{1 \leq j \leq k\,, \\ 1 \leq \ell \leq n}a_{j,\ell}(x)\xi_j\xi_\ell,\,  $$ and with rank $r_{0}(x)\geq r_0.$
\item There exists a non-singular matrix $\theta (x),$ such that $\det(\theta(x))=1$ and with
\begin{equation}
     a_2(x,\xi)=\sum_{j=1}^{r(x)}\lambda_j(x)(\theta(x)\xi)_j,
\end{equation}where $0<\lambda_j(x),$ $1\leq j\leq r(x),$ are the positive eigenvalues of the matrix $(a_{j,\ell}(x)).$
\item The function $\xi\mapsto \overline{\xi}= \theta(x)\xi,$ and the norm
\begin{equation}
    \Vert \overline{\xi}\Vert:=\left(\sum_{j=1}^{r_0}\overline{\xi}_j^4+\sum_{j=r_0+1}\overline{\xi}_j^2\right)^{\frac{1}{4}}, 
\end{equation} is homogenous of order one at $\overline{\xi}\neq 0,$ that is, $\Vert \delta_{a}\overline{\xi}\Vert=a\Vert \overline{\xi}\Vert,$ where the family of dilations  $(\delta_a)_{a>0}$, $\delta_{a}: \Ran \rightarrow \Ran$ is defined by 
$$
(C):\quad \delta_{a}(x)=(a x_1,\cdots, ax_{r_0},a^2x_{r_0+1},\cdots, a^2 x_n).
$$ One also has the change of variable property $d(\delta_a\xi)=a^Qd\xi.$
\item On has the estimate
\begin{equation}
    (1+\Vert\overline{\xi}\Vert)\leq C m(x,\theta^{-1}(x)\xi),\,\xi\in \mathbb{R}^n, 
\end{equation} for some $C>0.$
\end{itemize}     
\end{remark}
With the notation in Remark \ref{Subelliptic:remark} we present the following lemma.
\begin{lemma}\label{Subelliptic:lemma}  Let $\mathbb{X}:=\{X_1, X_2,\dots, X_k\}$ be a family of real vector fields on $\Ran$ satisfying the H\"ormander condition of order $2.$  Let $a_2(x,D)$ be the H\"ormander sub-Laplacian
\begin{equation}\label{Hormandersum1}
    a_2(x,D)=-\sum\limits_{j=1}^kX_j^2,
\end{equation}  and let us consider the metric $g$ and the $g$-weight $m$ defined  as in \eqref{gen.metric} and  \eqref{gen.weight}, respectively. Let $\varepsilon_0=\frac{Q}{2n}$ and let  $\varepsilon_0\leq\varepsilon\leq 1.$ Consider a symbol $q\in S(m^{-\frac{n}{2}\varepsilon },g)$ supported in 
    $R\leq a(x,\xi)+\jpxe\leq 3R $ for $R>1 $. Then, for all $k\geq 1 $ we have that
\[\lvert | q(x,D)f \rvert|_{\infty}\leq C \lvert |q\rvert |_{k; S(m^{-n\varepsilon },g)}  \lvert |f\rvert |_{L^\infty}, \] 
where the constant $C$ is independent of $q$ and $f$.
\end{lemma}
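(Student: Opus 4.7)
The plan is to mirror the proof of Lemma \ref{lem:l} (and its sharper two-dimensional refinement, Lemma \ref{lem:lbg}), replacing the Euclidean measure of the level sets of $m$ by its sub-elliptic counterpart, which is controlled by the anisotropic dilations $(\delta_a)$ of the H\"ormander system. As in those proofs, the identity $q(x,D)f=\hat q_x\ast f$ and Young's inequality immediately reduce the task to showing
\[
\|\hat q_x\|_{L^1(\mathbb{R}^n)}\leq C\,\|q\|_{k;S(m^{-n\varepsilon/2},g)}
\]
uniformly in $x\in\mathbb{R}^n$. I would then split this norm as $\int_{|y|<b}+\int_{|y|\ge b}$ with $b=R^{\varepsilon-1}$ and apply Cauchy--Schwarz together with Plancherel, so that the problem boils down to estimating the layer integrals
\[
\mathcal{I}_{\alpha}:=\int_{R\le a(x,\xi)+\langle X\rangle\le 3R} m(x,\xi)^{-n\varepsilon-\alpha}\,d\xi
\]
for $\alpha=0$ and for suitable $\alpha>0$.

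The key new input is a sub-elliptic measure estimate for $\mathcal{I}_\alpha$ using the structure of Remark \ref{Subelliptic:remark}. Concretely, one performs the change of variables $\xi=\theta(x)^{-1}\overline\xi$, whose Jacobian equals one since $\det\theta(x)=1$, followed by the non-isotropic dilation $\overline\xi=\delta_{\sqrt R}u$, whose Jacobian equals $R^{Q/2}$. Since $a_2(x,\theta(x)^{-1}\overline\xi)=\sum_{\ell}\lambda_\ell(x)\overline\xi_\ell^2$ is comparable, uniformly in $x$, to the square of the anisotropic norm $\|\overline\xi\|$ on the relevant region, the domain of integration transforms into an $R$-independent shell in $u$-space, and the integrand becomes $R^{-n\varepsilon-\alpha}\Phi(\|u\|)$ for a fixed model function $\Phi$ depending only on $\|u\|$. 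Following the case split of Lemma \ref{lem:lbg}, in the zone $|x|^2\le R/(2C)$ one obtains $\mathcal{I}_\alpha\lesssim R^{Q/2-n\varepsilon-\alpha}$, while in the zone $|x|^2>R/(2C)$ the factor $\langle X\rangle^{-\alpha}\lesssim R^{-\alpha/2}$ compensates for the worse measure and yields an analogous bound.

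Plugging these estimates into the two halves of the split and optimising in $k$ at the edge of convergence of the model integral in $u$-space produces, after collecting powers of $R$, precisely the sharp threshold $\varepsilon_0=Q/(2n)$ claimed in the statement. The main technical obstacle I anticipate is the uniformity in $x$: both the diagonalising matrix $\theta(x)$ and the positive eigenvalues $\lambda_\ell(x)$ depend on $x$, and one must control the model integral on a non-Euclidean unit shell independently of $x$. The homogeneity $\|\delta_a u\|=a\|u\|$ together with the change-of-variable identity $d(\delta_a u)=a^Q\,du$ is exactly what makes this uniformity possible: it reduces the $R$-dependence to a clean multiplicative factor and isolates the geometric information into a single compactly-supported model integral, so that the full argument of Lemma \ref{lem:lbg} then goes through with the homogeneous dimension $Q$ in place of the Euclidean one.
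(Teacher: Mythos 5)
Your plan matches the paper's proof in its skeleton: the same reduction to $\|\hat q_x\|_{L^1(\mathbb{R}^n)}$ via Young's inequality, the same dyadic-type splitting with $b=R^{\varepsilon-1}$, the same Cauchy--Schwarz/Plancherel pair, the same case split on $|x|^2\lessgtr R/(2C)$, and the same use of the Nagel--Stein--Wainger dilations $\delta_a$ with Jacobian $a^Q$ to bring out the homogeneous dimension $Q$. Where the proposal goes wrong is precisely at the point you flag as your ``key new input.''

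The assertion that $a_2(x,\theta(x)^{-1}\overline\xi)=\sum_\ell\lambda_\ell(x)\overline\xi_\ell^2$ is \emph{comparable}, uniformly in $x$, to $\|\overline\xi\|^2$ on the relevant region cannot be right: the rank of $(a_{ij}(x))$ is only $\geq r_0$, and the eigenvalues $\lambda_j(x)$ for $r_0<j\le r(x)$ can be arbitrarily small (and the form vanishes altogether on the $j>r(x)$ directions), so $a_2$ cannot dominate $\|\overline\xi\|^2$. Only the one-sided lower bound $(1+\|\overline\xi\|)\lesssim m(x,\theta(x)^{-1}\overline\xi)$ from Remark~\ref{Subelliptic:remark} holds, using the full weight $m=a_2+|x|^2+\langle X\rangle$. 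As a consequence, the claim that after applying $\delta_{\sqrt R}$ the domain of integration ``transforms into an $R$-independent shell in $u$-space'' and the integrand into a single ``model function $\Phi(\|u\|)$'' is not justified: the transformed domain $\mathcal A'$ carries the coefficients $R\lambda_j(x)$, which are neither bounded away from zero nor bounded above, so homogeneity of $\|\cdot\|$ under $\delta_a$ alone does not eliminate the $x$-dependence. The paper's actual resolution is a \emph{second} change of variables applied only to the coordinates in $B=\{j:\,r_0\le j\le r(x),\ R\lambda_j(x)\ge 1\}$, combined with the observation that $(R\lambda_j(x))^{-|B|}\le 1$ for $j\in B$ and that $|\overline\xi'|\gtrsim\sum_{i>r(x)}|\overline\xi_i'|$ controls the directions where $a_2$ gives no information. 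This step is the mechanism by which the final model integral becomes independent of both $x$ and $R$, and it is missing from your plan. Finally, there is no ``optimising in $k$ at the edge of convergence'': the paper simply verifies that the two displayed conditions \eqref{Req:1} and \eqref{Req:2} hold for \emph{every} integer $k\ge 1$ once $\varepsilon=\varepsilon_0=Q/(2n)$ (using $Q=2n-r_0$ so that $Q/4-n/2=-r_0/4\le 0$ and $Q/(2n)\ge 1/2$), and the case $\varepsilon>\varepsilon_0$ then follows by the inclusion of symbol classes.
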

\begin{proof} We only have to prove the lemma in the borderline case $\varepsilon=\varepsilon_0.$ The result for the other values of $\varepsilon\in [\varepsilon_0,1)$ follows from the inclusion properties of the $S(m,g)$-classes. Let us improve as necessary the proof given for Lemma \ref{lem:l}. Let  $q\in S(m^{-\frac{n}{2}\varepsilon },g)$ be supported in 
 \[\{(x,\xi)\in \Ran\times\Ran \, : R\leq a(x,\xi)+\jpxe\leq 3R \}, \]
 for 
    $R>1 $ fixed. In view of the convolution identity
\begin{align*}
 q(x,D)f(x)
=&\hat{q}_x*f(x),
\end{align*}
where $\hat{q}_x$ is the Fourier transform
$q_x(\xi)=q(x,\xi)$ with respect to the variable $\xi,$
 we obtain
\[|q(x,D)f(x)|\leq
    \vert|\hat{q}_x \rvert|_{L^1}  \vert|f\rvert|_{L^\infty},\,\,x\in\Ran. \]
 It is then enough to prove that for all $x\in\Ran$ and $ l>n/4 $
\[ \vert|\hat{q}_x \rvert|_{L^1}\leq C \lvert|q\rvert|_{l; S(m^{-\frac{n}{2}\varepsilon },g)}.\]

\indent Let us consider  $b>0$ to be fixed below.  We can follow the same argument as in the proof of Lemma \ref{lem:l} to deduce  by the use of the Cauchy-Schwarz inequality in order to obtain the estimate
\begin{eqnarray*}
\int\limits_{|y|<b}|\hat{q}_x(y)|dy\leq  C\vert|q\rvert|_{l;S(m^{-\frac{n}{2}\varepsilon },g)}b^{\frn}\left(\,\int\limits_{\jp\leq 3R}\jp^{-n\varepsilon}d\xi\right)^{\half}.
\end{eqnarray*}
Now, by choosing $b=R^{\varepsilon -1}$, we obtain
\[\int\limits_{|y|<b}|\hat{q}_x(y)|dy\leq C\vert|q\rvert|_{l;S(m^{-\frac{n}{2}\varepsilon },g)}.\]
On the other hand, for the integral $\int\limits_{|y|\geq b}|\hat{q}_x(y)|dy,$ we first note that for an integer $k>\frac{n}{4}$, we have $k>1$ we have
\begin{eqnarray*}
\int\limits_{|y|\geq b}|\hat{q}_x(y)|dy \leq & Cb^{\frn -k }\left(\,\int\limits_{y\geq b}|y|^{2k} |\hat{q}_x(y)|^{2}dy\right)^{\half}\\
\leq & Cb^{\frn -k }\left(\,\int\limits_{\Ran}|\nabla_{\xi}^k q(x,\xi)|^{2}d\xi\right)^{\half},
\end{eqnarray*}the Plancherel theorem implies the inequality
\begin{equation}\label{order:inequality:lemma:2:2:2}
  \int\limits_{|y|\geq b}|\hat{q}_x(y)|dy   \leq  C\lvert|q\rvert|_{k;S(m^{-n\varepsilon },g)}b^{\frn -k }\left(\,\int\limits_{R\leq   a_2(x,\xi) +|x|^2+\jpxe\leq 3R}m^{-n\varepsilon -k}(x,\xi)d\xi\right)^{\half}.
\end{equation} Let $X_{j}(x,\xi)$ be the principal symbol of the vector-field $X_j,$ $1\leq j\leq k.$ Then
\begin{equation}
    a_2(x,\xi)=\sum_{j=1}^kX_{j}(x,\xi)^2
\end{equation}is the principal symbol of $a_{2}(x,D).$ For any $x\in \mathbb{R}^n,$ in view of Remark \ref{Subelliptic:remark} we can find a positive-definite matrix $(a_{i,j}(x))_{i,j=1}^n$ in such a way that
$$ a_2(x,\xi)=\sum_{j=1}^kX_{j}(x,\xi)^2=\sum_{1 \leq j \leq k\,, \\ 1 \leq \ell \leq n}a_{j,\ell}(x)\xi_j\xi_\ell,\,  $$ and with range $r_{0}(x)\geq r_0.$ Then, there exists a non-singular matrix $\theta (x),$ such that $\det(\theta(x))=1$ and with
\begin{equation}
     a_2(x,\xi)=\sum_{j=1}^{r(x)}\lambda_j(x)(\theta(x)\xi)_j,
\end{equation}where $0<\lambda_j(x),$ $1\leq j\leq r(x),$ are the positive eigenvalues of the matrix $(a_{j,\ell}(x)).$
By defining
\begin{equation}\label{First:changes:variables}
    \overline{\xi}:=\theta(x)\xi,
\end{equation}

\begin{equation}
    \Vert \overline\xi\Vert:=\left(\sum_{j=1}^{r_0}\overline{\xi}_j^4+\sum_{j=r_0+1}^n\overline{\xi}_j^2\right)^{\frac{1}{4}}\,,
\end{equation} we  use the following family of dilations
\begin{equation}
    \delta_{r}(x):=(r \overline{\xi}_1,\cdots, r\overline{\xi}_{r_0},r^2\overline{\xi}_{r_0+1},\cdots, r^2\overline{\xi}_n), \,r>0.
\end{equation}Observe that $d(\delta_r\xi)=r^{Q}d\xi.$ The properties of $\theta(x)$ give the lower bound
\begin{equation}
    (1+\Vert\overline{\xi}\Vert)^2\lesssim  a_2(x,\theta(x)^{-1}\overline{\xi}).
\end{equation}

In order to estimate the integral in \eqref{order:inequality:lemma:2:2:2} we split into two cases
:
$|x|^2\leq \frac{R}{2C}$ and $|x|^2>\frac{R}{2C}$. \\

If $|x|^2\leq \frac{R}{2}$, let us estimate \eqref{order:inequality:lemma:2:2:2} as follows:
\begin{align*}
    \int\limits_{R\leq   a_2(x,\xi) +|x|^2+\jpxe\leq 3R}m^{-n\varepsilon -k}(x,\xi)d\xi & =\int\limits_{R\leq   a_2(x,\xi) \leq 3R-|x|^2-\jpxe}m^{-n\varepsilon -k}(x,\xi)d\xi \\
    & \leq \int\limits_{R\leq   a_2(x,\xi) }m^{-n\varepsilon -k}(x,\xi)d\xi \\
    &= \int\limits_{R\leq   a_2(x,\theta(x)^{-1}\overline{\xi})}m^{-n\varepsilon -k}(x,\theta(x)^{-1}\overline{\xi})|\det(\theta(x)^{-1})|d\overline{\xi}\\
     &\lesssim \int\limits_{\mathcal{A}}\left(\sum_{j=1}^{r_0}\overline{\xi}_j^2+\sum_{j=r_{0}+1}^{r(x)}\lambda_j(x)|\overline{\xi}_j|^2+|\overline{\xi}|\right)^{-n\varepsilon -k}d\overline{\xi},
\end{align*}
where
$$   \mathcal{A}:=\left\{\overline{\xi}:    \sum_{j=1}^{r_0}|\overline{\xi}_j|^2+\sum_{j=r_0+1}^{r(x)}\lambda_j(x)|\overline{\xi}_{j}|^2+|\overline{\xi}|\geq R\right\}.$$
The  change of variables  $\overline{\xi}:=\delta_{\sqrt{R}}\xi'$   gives the inequality
\begin{align*}
    & \int\limits_{R\leq   a_2(x,\xi) +|x|^2+\jpxe\leq 3R}m^{-n\varepsilon -k}(x,\xi)d\xi \\
     &\lesssim \sqrt{R}^{Q-2(n\varepsilon+k)}\int\limits_{\mathcal{A}'}\left(\sum_{j=1}^{r_0}\overline{\xi'}_j^2+\sum_{j=r_{0}+1}^{r(x)}R\lambda_j(x)|\overline{\xi'}_j|^2+|\overline{\xi'}|\right)^{-n\varepsilon -k}d\overline{\xi'},
\end{align*}
where 
$$   \mathcal{A}':=\left\{\overline{\xi}:    \sum_{j=1}^{r_0}|\overline{\xi}_j'|^2+\sum_{j=r_0+1}^{r(x)}R\lambda_j(x)|\overline{\xi'}_{j}|^2+|\overline{\xi'}|\geq 1\right\}.$$ 

Let us consider the set $B=\{j:\,r_{0}\leq j\leq r(x) \textnormal{ and } R\lambda_{j}(x)\geq 1\}.$ Let us apply  the change of variables $\overline{\xi}'\mapsto \overline{\xi},  $ defined by
$$\overline{\xi}_j:=\overline{\xi}_j',\,j\notin B,$$
and 
$$ \overline{\xi}_j:=\delta_{\sqrt{R\lambda_j(x)}}\overline{\xi}_j' ,\,j\in B. $$ Since $d\xi'_j=(R\lambda_{j})^{-1}d\overline{\xi}_j,$ for $j\in B,$
the Jacobian of this change is
$$ d\overline{\xi}'_1d\overline{\xi}_{2}'\cdots  d\overline{\xi}_{r_0}'\Pi_{j\in B}d\overline{\xi}_{j}'\Pi_{j\in(\{r_0+1,\cdots,r(x) \}\setminus B \cup\{r(x)+1,r(x)+2,\cdots, n\}}d\overline{\xi}_{j}' $$
$$=\Pi_{j\in B}({R\lambda_j(x)})^{-|B|} d\overline{\xi}. $$
The eigenvalue conditions $ R\lambda_{j}(x)\geq 1,$ $j\in B,$ imply that
$$|\overline{\xi}|\geq |\overline{\xi}'|\geq C \sum_{i=r(x)+1}^{n}|\overline{\xi}'_i|,\textnormal{ for some }C>0,$$
 and taking into account that $({R\lambda_j(x)})^{-|B|} \leq 1,$ for any $j\in B,$ we have the estimate
\begin{align*}
     &\int\limits_{\mathcal{A}'}\left(\sum_{j=1}^{r_0}\overline{\xi'}_j^2+\sum_{j=r_{0}+1}^{r(x)}R\lambda_j(x)|\overline{\xi'}_j|^2+|\overline{\xi'}|\right)^{-n\varepsilon -k}d\overline{\xi'}\\
     &\leq \int\limits_{\mathcal{A}'}\left(\sum_{j=1}^{r_0}\overline{\xi'}_j^2+\sum_{j=r_{0}+1}^{r(x)}R\lambda_j(x)|\overline{\xi'}_j|^2+\sum_{i=r(x)+1}^{n}|\overline{\xi}'_i|\right)^{-n\varepsilon -k}d\overline{\xi'}\\
      &\lesssim  \int\limits_{\mathcal{A}''}\left(\sum_{j=1}^{r_0}|\overline{\xi}_j|^2+\sum_{j=r_0+1}^{r(x)}|{\overline{\xi}}_{j}|^2+\sum_{i=r(x)+1}^{n}|\overline{\xi}_i|\right)^{-n\varepsilon -k}d\overline{\xi}\\
\end{align*}
where 
$$   \mathcal{A}'':=\left\{\overline{\xi}:    \sum_{j=1}^{r_0}|\overline{\xi}_j|^2+\sum_{j=r_0+1}^{r(x)}|{\overline{\xi}}_{j}|^2+|\overline{\xi}|\geq 1\right\}.$$ Then, we can compare
\begin{align*}& \int\limits_{\mathcal{A}''}\left(\sum_{j=1}^{r_0}|\overline{\xi}_j|^2+\sum_{j=r_0+1}^{r(x)}|{\overline{\xi}}_{j}|^2+\sum_{i=r(x)+1}^{n}|\overline{\xi}_i|\right)^{-n\varepsilon -k}d\overline{\xi}   \\  
     &\lesssim \int\limits_{ \{\xi':\sum_{j=1}^{r(x)}{\xi'}_j^2+|\xi'|\geq 1 \}  }\left(\sum_{j=1}^{r_0}\overline{\xi'}_j^2+|\overline{\xi}| \right)^{-n\varepsilon -k}d\overline{\xi'}\leq C(n,k,Q)<\infty
\end{align*}when
$$   n\varepsilon +k>\frac{Q}{2}.$$
So, putting all the estimates above together we have
$$   \int\limits_{|y|\geq b}|\hat{q}_x(y)|dy   \leq  C\lvert|q\rvert|_{k;S(m^{-n\varepsilon },g)}R^{(\varepsilon-1)(\frn -k) }\left(\,\int\limits_{R\leq   a_2(x,\xi) +|x|^2+\jpxe\leq 3R}m^{-n\varepsilon -k}(x,\xi)d\xi\right)^{\half}  $$
$$   \lesssim   C \Vert q\Vert_{k;  S(m^{-n\varepsilon },g)}R^{(\varepsilon-1)(\frac{n}{2} -k) } \left(\sqrt{R}^{Q-2\left(n\varepsilon+k\right)}\right)^{\frac{1}{2}} $$
$$   \lesssim   C \Vert q\Vert_{k;  S(m^{-n\varepsilon },g)}R^{(\varepsilon-1)(\frac{n}{2} -k) +\frac{1}{4}({Q-2\left(n\varepsilon+k\right))}}. $$
So, to estimate the norm $ \int\limits_{|y|\geq b}|\hat{q}_x(y)|dy$ with a bound independent of $R>1,$ we require the following two conditions
\begin{equation}\label{Req:1}
     n\varepsilon +k>\frac{Q}{2}
\end{equation} and
\begin{equation}\label{Req:2}
    (\varepsilon-1)(\frac{n}{2} -k) +\frac{1}{4}({Q-2\left(n\varepsilon+k\right))}\leq  0.
\end{equation} If $k\geq 1$ for $\varepsilon= \varepsilon_0=\frac{Q}{2n}$ we have that 
\begin{equation}
    n\varepsilon+k\geq\frac{Q}{2}+1>\frac{Q}{2} 
\end{equation} which shows that for $k\geq 1 $ the inequality in \eqref{Req:1} is automatically satisfied. Observe that
\begin{align*}
    (\varepsilon-1)\left(\frac{n}{2} -k\right) +\frac{1}{4}({Q-2\left(n\varepsilon+k\right))}&=\left(\frac{Q}{2n}-1\right)\left(\frac{n}{2}-k\right)+\frac{Q}{4}-\frac{Q}{4}-\frac{k}{2}\\
    &=\frac{Q}{4}-\frac{Qk}{2n}-\frac{n}{2}+\frac{k}{2}\leq0,
\end{align*} for any $k\geq 1.$ Indeed, since  $Q=2n-r_0,$ one has $$ Q/4-n/2=-r_0/4.$$  Note also that the inequality $Q\geq n$ implies  that $Q/2n\geq 1/2,$ from which for any $k\geq 1$ we deduce the required condition  
\begin{align*}
    \frac{Q}{4}-\frac{Qk}{2n}-\frac{n}{2}+\frac{k}{2}=\left(\frac{Q}{4}-\frac{n}{2}\right)-k\left(\frac{Q}{2n}-\frac{1}{2}\right)\leq -r_0/4\leq 0. 
\end{align*}

On the other hand, if $|x|^2>\frac{R}{2C}$, we have
$-|x|^2<-\frac{R}{2C}$ and 
\[\jp\leq   a_2(x,\xi) +\jpxe\leq  3R-|x|^2\leq 3R-\frac{R}{2C}=\frac{(6C-1)}{2C}R. \]
Since $6C-1>0$ and by using the fact that $|x|^2>\frac{R}{2C}$, we have $\jpx^{-k}\leq C'R^{-\frac{k}{2}}$ and  we  obtain
\begin{align*}
b^{\frn -k }\left(\,\int\limits_{R\leq   a_2(x,\xi) +|x|^2+\jpxe\leq 3R}m^{-n\varepsilon -k}(x,\xi)d\xi\right)^{\half}=&b^{\frn -k } \left(\,\int\limits_{\jp\leq C'R}m^{-n\varepsilon}(x,\xi)m^{-k}(x,\xi)d\xi\right)^{\half}\\
\leq &CR^{(\frac{n}{2}-k)(\varepsilon -1)}\left(\,\int\limits_{\jp\leq C'R}\jp^{-n\varepsilon}\jpx^{-2k}d\xi\right)^{\half}\\
\leq &CR^{(\frac{n}{2}-k)(\varepsilon -1)}\jpx^{-k}\left(\,\int\limits_{\jp\leq C'R}\jp^{-n\varepsilon}d\xi\right)^{\half}\\
\leq & CR^{(\frac{n}{2}-k)(\varepsilon -1)}R^{-\frac{k}{2}}R^{\frac{n}{2}(1-\varepsilon)}.
\end{align*}
By observing that 
\[R^{(\frac{n}{2}-k)(\varepsilon -1)+\frac{n}{2}(1-\varepsilon)-\frac{k}{2}}=R^{k(\frac{1}{2}-\varepsilon)}\leq C,\]
since $\varepsilon={Q}/{2n}\geq 1/2.$ The desired estimate follows.
\end{proof} 

Note that the index $\varepsilon_0=Q/2n$ is the best possible with respect to the approach that we have developed in the proof of Lemma \ref{Subelliptic:lemma}.
Since the $L^p$-boundedness of the classes $S(m,g)$ for all $1<p<\infty$ depends on the index $\varepsilon_0$ computed in Lemma \ref{Subelliptic:lemma}, the methods of this paper imply the following $L^p$-boundedness result. 
\begin{theorem}\label{Lpreal:sub} Let $\mathbb{X}:=\{X_1, X_2,\dots, X_k\}$ be a family of real vector fields on $\Ran$ satisfying the H\"ormander condition of order $2$. Let $a_2(x,D)$ be the H\"ormander sub-Laplacian
\begin{equation}\label{Hormandersum1:222}
    a_2(x,D)=-\sum\limits_{j=1}^kX_j^2,
\end{equation}  and let us consider the metric $g$ and the $g$-weight $m$ defined  as in \eqref{gen.metric} and  \eqref{gen.weight}, respectively.  Let $\varepsilon_0=\frac{Q}{2n}$ and  $\varepsilon_0\leq \beta\leq 1.$ Let $\sigma(x,D)\in \textnormal{Op}S(m^{-\frn \beta},g).$ Then,  $\sigma(x,D):L^p(\mathbb{R}^n)\rightarrow L^p(\mathbb{R}^n)$ extends to a bounded operator, for all $1<p<\infty.$  Moreover, for every $k\geq 1,$ we have
\begin{equation*}
    \Vert \sigma(x,D)f\Vert_{L^p(\mathbb{R}^n)}\leq C\Vert \sigma(x,\xi)\Vert_{k,S(m^{-n\beta/2},g)}\Vert f\Vert_{{L^p(\mathbb{R}^n)}},
\end{equation*}where the constant $C>0,$ is independent of  $f\in L^p(\mathbb{R}^n),$ 
\end{theorem}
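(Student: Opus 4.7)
The plan is to repeat, almost verbatim, the argument already used to establish Theorem \ref{Lpreal}, with the key input Lemma \ref{lem:l} replaced by the subelliptic version Lemma \ref{Subelliptic:lemma}. The sharp exponent $\varepsilon_0$ changes from $3/4$ (for general non-negative $a_2\in S^2$) to $\varepsilon_0=Q/2n$ (for H\"ormander sums of squares of order $2$), and this is the only place where the H\"ormander condition enters.

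First I would establish the analogue of Theorem \ref{parta} in the present setting, namely: for $Q/2n\le \beta\le 1$ and $\sigma(x,D)\in\mathrm{Op}(S(m^{-n\beta/2},g))$, the operator $\sigma(x,D):L^{\infty}(\mathbb{R}^n)\to BMO(\mathbb{R}^n)$ is bounded, with norm controlled by $\|\sigma\|_{k,S(m^{-n\beta/2},g)}$ for some $k\ge 1$. The argument is line-by-line that of Theorem \ref{parta}: fix a ball $B(x_0,r)$, decompose $\sigma=\sigma^0+\sigma^1$ via the cutoff $\tilde\gamma(x,\xi)=\gamma(r\, m(x,\xi))$, use the Fr\'echet-continuity argument to see that both pieces still satisfy the $S(m^{-n\beta/2},g)$ estimates uniformly in $r$, treat $\sigma^0$ by means of the mean value theorem together with the Littlewood-Paley-type decomposition $1=\eta(rm)+\sum_{j\ge 1}\rho(2^jrm)$ on the support of $\sigma^0$, and estimate each dyadic piece by Lemma \ref{Subelliptic:lemma} (in place of Lemma \ref{lem:l}), picking up the same geometric factor $r^{-1}2^{-j}$ and thus a summable series. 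Then treat $\sigma^1$ by the Cauchy-Schwarz/$L^2$-duality argument of Theorem \ref{parta}, using the Weyl-H\"ormander calculus $S(m^{-n\beta/2},g)\cdot S(m^{n\beta\varepsilon/2},g)\subset S(m^{-n\beta(1-\varepsilon)/2},g)\subset S(1,g)\subset \mathscr B(L^2)$, together with the commutator identity $[M_\phi,\sigma^1(x,D)]$ and Taylor expansion of $\phi$. No step of this part of the argument depends on the value of $\varepsilon_0$: it is Lemma \ref{Subelliptic:lemma} alone that changes, and its output feeds into the estimate for the dyadic pieces $\rho_{j,k}$.

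Next, since $\beta\ge 0$ implies $m^{-n\beta/2}\le 1$, one has $S(m^{-n\beta/2},g)\subset S(1,g)$, and because $g$ is a H\"ormander metric with Planck function $h_g\le 1$, the Calder\'on-Vaillancourt-type theorem of Bony-Chemin (Theorem \ref{thm.cont} applied with $M_1=M=1$, so that $H(1,g)=L^2$) gives the $L^2$-boundedness of $\sigma(x,D)$ with control by a seminorm $\|\sigma\|_{k,S(m^{-n\beta/2},g)}$. Interpolating the $L^\infty$-$BMO$ bound from the previous step with this $L^2$-estimate, by means of the Fefferman-Stein interpolation theorem, yields the $L^p$-boundedness for all $2\le p<\infty$, with the correct seminorm control; the range $1<p\le 2$ then follows by duality since the formal adjoint of $\sigma(x,D)\in\mathrm{Op}(S(m^{-n\beta/2},g))$ lies in the same class (for the Weyl quantization this is immediate, and one passes to the Kohn-Nirenberg quantization via the semigroup $J_{\pm 1/2}$, which preserves $S(m^{-n\beta/2},g)$ by the calculus).

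I expect the main technical obstacle is not in the present theorem itself but already resolved in Lemma \ref{Subelliptic:lemma}: the delicate change of variables $\overline\xi=\theta(x)\xi$ together with the anisotropic dilation $\delta_{\sqrt{R}}$, followed by the splitting according to the set $B=\{j:R\lambda_j(x)\ge 1\}$, which is what allows the homogeneous dimension $Q$ to replace $n$ and produces the sharp index $\varepsilon_0=Q/2n$. Once Lemma \ref{Subelliptic:lemma} is at hand, the remainder of the argument is a mechanical transcription of the proofs of Theorems \ref{parta} and \ref{Lpreal}, and no new ideas are required; in particular, the sharpness of $\varepsilon_0$ in Lemma \ref{Subelliptic:lemma} transfers directly to the sharpness of the threshold for the $L^p$-bounds stated in Theorem \ref{Lpreal:sub}.
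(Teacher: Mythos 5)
Your proposal is correct and follows exactly the route the paper takes (though the paper compresses it to a single sentence by saying that the methods carry over once Lemma \ref{Subelliptic:lemma} replaces Lemma \ref{lem:l}): repeat the $L^\infty$-$BMO$ argument of Theorem \ref{parta} with the subelliptic kernel estimate, combine it with the $L^2$-boundedness from the Bony--Chemin theorem (applied with $S(m^{-n\beta/2},g)\subset S(1,g)$), interpolate to get $2\le p<\infty$, and finish by duality using that the adjoint stays in the same class. No gap; the only novelty is indeed already concentrated in Lemma \ref{Subelliptic:lemma}, as you observe.
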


Regarding the smaller exponents $\varepsilon_0$ below, by applying the Fefferman-Stein interpolation we obtain the following bounds:
\begin{theorem}\label{Lp:sub} Let $\mathbb{X}:=\{X_1, X_2,\dots, X_k\}$ be a family of real vector fields on $\Ran$ satisfying the H\"ormander condition of order $2$. Let $a_2(x,D)$ be the H\"ormander sub-Laplacian
\begin{equation}\label{Hormandersum1:sub}
    a_2(x,D)=-\sum\limits_{j=1}^kX_j^2,
\end{equation}  and let us consider the metric $g$ and the $g$-weight $m$ defined  as in \eqref{gen.metric} and  \eqref{gen.weight}, respectively.   Let $\varepsilon_0=\frac{Q}{2n}$ and  $0\leq \beta<\varepsilon_0.$ Let  $\sigma(x,D)\in \textnormal{Op}(S(m^{- \frn\beta},g)).$ Then, $\sigma(x,D):L^p(\mathbb{R}^n)\rightarrow L^p(\mathbb{R}^n)$ extends to a bounded operator, provided
\begin{equation}
    \left| \frac{1}{p}-\frac{1}{2}\right|\leq \frn\beta.
\end{equation}
Moreover, for such $p$ and all $\ell>\frac{n}{4}$ we have
\begin{equation*}
    \Vert \sigma(x,D)f\Vert_{L^p(\mathbb{R}^n)}\leq C\Vert \sigma(x,\xi)\Vert_{\ell,S(m^{-n\beta/2},g)}\Vert f\Vert_{{L^p(\mathbb{R}^n)}},
\end{equation*}where the constant $C>0,$ is independent of  $f\in L^p(\mathbb{R}^n).$
\end{theorem}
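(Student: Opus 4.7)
The plan is to mirror the proof of the earlier Theorem~\ref{Lp} (the non-subelliptic $L^p$ interpolation statement) but replacing its endpoint $\varepsilon_0 = 3/4$ by the improved subelliptic endpoint $\varepsilon_0 = Q/(2n)$ supplied by Lemma~\ref{Subelliptic:lemma}. The strategy has two genuine ingredients: an $L^\infty$-$BMO$ statement at the endpoint $\beta = \varepsilon_0$, and the Fefferman–Stein complex interpolation.

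First I would establish the following subelliptic analogue of Theorem~\ref{parta}: for $\varepsilon_0 \leq \beta < 1$ and every $\sigma \in S(m^{-n\beta/2},g)$, the operator $\sigma(x,D)$ extends boundedly from $L^\infty(\mathbb{R}^n)$ to $BMO(\mathbb{R}^n)$, with norm controlled by $\|\sigma\|_{\ell,S(m^{-n\beta/2},g)}$ for any $\ell \geq 1$. The proof is the verbatim argument of Theorem~\ref{parta}: localise on a ball $B(x_0,r)$, split $\sigma = \sigma^0 + \sigma^1$ with $\sigma^0 = \sigma\,\gamma(r\cdot m)$ supported where $m \lesssim 1/r$, control $I_0$ via the mean value theorem together with the dyadic decomposition $\sum_j \rho(2^j r m(x,\xi))\sigma'_k(x,\xi)$ and Lemma~\ref{Subelliptic:lemma} applied to each dyadic piece, and control $\frac{1}{|B|}\int_B |\sigma^1 f|$ by the standard trick of inserting $L L^{-1}$ with $L = m(x,D)^{n\beta\varepsilon/2}$ together with the commutator estimate on $[M_\phi, \sigma^1(x,D)]$. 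The only substantive change compared to Theorem~\ref{parta} is that the dyadic building blocks $\rho_{j,k}$ now fall into the scope of Lemma~\ref{Subelliptic:lemma} rather than Lemma~\ref{lem:l}, so the endpoint is $\varepsilon_0 = Q/(2n)$ instead of $3/4$.

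Next I would run the Fefferman–Stein interpolation exactly as in the proof of Theorem~\ref{Lp}. Consider the analytic family indexed by $z \in \mathbb{C}$, $\mathrm{Re}(z)\in[0,1]$,
\begin{equation*}
T_z := \mathrm{Op}(\sigma_z), \qquad \sigma_z(x,\xi) := e^{z^2}\sigma(x,\xi)\,m(x,\xi)^{\frac{n\beta}{2} + \frac{n}{2}\varepsilon_0(z-1)}.
\end{equation*}
At $\mathrm{Re}(z) = 1$ we have $\sigma_1 \in S(m^{n\beta/2},g) \subset S(1,g)$ since $\beta \leq \varepsilon_0$, so $T_1$ is bounded on $L^2$ by Theorem~\ref{thm.cont}. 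At $\mathrm{Re}(z) = 0$ we have $\sigma_0 \in S(m^{n\beta/2 - n\varepsilon_0/2},g)$, so by the $L^\infty$-$BMO$ step above, $T_0$ is bounded from $L^\infty$ to $BMO$. The complex Fefferman–Stein interpolation theorem then gives that $T_t$ is bounded on $L^{2/t}$ for every $t \in (0,1]$. Choose $t_0 \in (0,1)$ with $\beta = \varepsilon_0(1-t_0)$; then $T_{t_0} = e^{t_0^2}\sigma(x,D)$ is bounded on $L^{2/t_0}$. Interpolating between the $L^2$ boundedness and the $L^{2/t_0}$ boundedness of $\sigma(x,D)$ yields $L^p$ boundedness for $2 \leq p \leq 2/t_0$, and interpolating the $L^{2/t_0}$ boundedness with the $L^\infty$-$BMO$ boundedness gives $L^p$ boundedness for $2/t_0 \leq p < \infty$; duality then covers $1 < p \leq 2$. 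The range of $p$ so obtained is precisely $|1/p - 1/2| \leq n\beta/2$, and the seminorm control on $\sigma$ is propagated through the interpolation.

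The genuine obstacle is the $L^\infty$-$BMO$ step at the new endpoint; everything else is formal once Lemma~\ref{Subelliptic:lemma} is available. The delicate point there is to verify that the dyadic pieces $\rho_{j,k}(x,\xi)$ of the symbol of $\partial_{x_k}A^0$ lie in $S(m^{-n\beta/2},g)$ with suitable seminorm bounds uniformly in $j$ and $r$, with Lemma~\ref{Subelliptic:lemma} providing $\|\mathrm{Op}(\rho_{j,k})\|_{\mathscr{B}(L^\infty)} \lesssim r^{-1}2^{-j}\|\sigma\|_{\ell,S(m^{-n\beta/2},g)}$ so that the geometric series over $j$ converges. Once this is in place, the rest is a straightforward reuse of the interpolation machinery.
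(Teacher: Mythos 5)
Your proposal reproduces the paper's intended proof: the subelliptic endpoint $L^\infty$-$BMO$ estimate is obtained by rerunning the argument of Theorem~\ref{parta} with Lemma~\ref{Subelliptic:lemma} supplying the bound on the dyadic pieces in place of Lemma~\ref{lem:l}, and the Fefferman--Stein complex-interpolation step with the analytic family $\sigma_z=e^{z^2}\sigma\,m^{\frac{n\beta}{2}+\frac{n}{2}\varepsilon_0(z-1)}$ is exactly the proof of Theorem~\ref{Lp} with the endpoint parameter $3/4$ replaced by $\varepsilon_0=Q/(2n)$. The paper does not give a separate proof of Theorem~\ref{Lp:sub}, only pointing to the methods already established, and this is precisely what you do.

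One caution, which you inherit from the paper's own proof of Theorem~\ref{Lp}: for $\beta<\varepsilon_0$ the operator $\sigma(x,D)$ itself is not $L^\infty$-$BMO$ bounded (only the auxiliary endpoint operator $T_0$ lands in the right symbol class), so the phrase ``interpolating the $L^{2/t_0}$-boundedness with the $L^\infty$-$BMO$ boundedness gives $L^p$ for $2/t_0\le p<\infty$'' does not apply to $\sigma(x,D)$ and, taken literally, would overshoot the claimed range. The restricted range $\left|\frac1p-\frac12\right|\le\frac{n}{2}\beta$ is meant to come solely from the $L^2$-to-$L^{2/t_0}$ interpolation of $\sigma(x,D)=e^{-t_0^2}T_{t_0}$ together with duality; you should state it that way and drop the second interpolation, otherwise the argument and the conclusion are in tension with each other.
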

\begin{remark}\label{remark:general:s}
    Observe from the argument in the proof of Lemma \ref{Subelliptic:lemma} we  have the following estimate
$$  \int\limits_{R\leq   a_2(x,\xi) +|x|^2+\jpxe\leq 3R}m^{-s}(x,\xi)d\xi$$
$$ \lesssim \sqrt{R}^{Q-2s}\int\limits_{ \{\xi':\sum_{j=1}^{r(x)}{\xi'}_j^2+|\xi'|\geq 1 \}  }\left(\sum_{j=1}^{r_0}\overline{\xi'}_j^2+||\overline{\xi'}||\right)^{-s}d\overline{\xi'}<\infty,
$$  when $ s>\frac{Q}{2}$ and $R\geq 1.$ We will use this inequality to give sharp spectral properties for the $S(m,g)$ classes in the next section.
\end{remark}

\section{Schatten-von Neumann classes of the Hamiltonians}\label{Schattensec}
In this section we present some aspects of the spectral analysis of the Hamiltonians of the form \eqref{hami1}. We first discuss  some basic properties of the Schatten-von Neumann classes of operators. A preliminary introduction to the trace classes of operators can be found in \cite{Lax02}, while for the theory of Schatten-von Neumann classes we refer the reader to \cite{GK69}, \cite{RS75} or to \cite{Sim10}.

\indent Let $H$ be a separable Hilbert space over $\mathbb{C}$ endowed with an inner product $(\cdot,\cdot)$, and let $T:H \rightarrow H$ be some compact linear operator with adjoint operator $T^{*}$. Clearly $T^{*}T$ is  positive, symmetric and compact, and we can write $|T|=(T^{*}T)^{\frac{1}{2}}$ for the \textit{absolute value of $T$}.

\indent By the spectral theorem there exists an orthonormal basis for $H$ consisting of eigenfunctions of $|T|$. Let $s_n(T)$ denote the corresponding non-zero eigenvalues- also called the \textit{singular values} of the operator $T$. 

\begin{definition}
	A compact operator $T$ on a Hilbert space $H$ into $H$ belongs to the Schatten-von Neumann class $\mathit{S}_p(H)$, $1 \leq p <\infty$, if
	\begin{equation}\label{defn.sp}
	\left(\sum_{k=1}^{\infty}(s_k(T))^{p}\right)^{\frac{1}{p}}<\infty \,.
	\end{equation}
	The sum \eqref{defn.sp} is the norm of $T$ in $\mathit{S}_p$ denoted by $\parallel\cdot\parallel_{\mathit{S}_p}$. 
\end{definition}
\begin{remark}
	If we endow $\mathit{S}_{p}$, $1 \leq p <\infty$, with the norm $\parallel \cdot \parallel_{\mathit{S}_p}$,
	then $\mathit{S}_p$ becomes a Banach space.
\end{remark}
As a particular case of the spaces $\mathit{S}_p$, $1\leq p < \infty$, $\mathit{S}_2$ is the space of \textit{Hilbert-Schmidt} operators. Moreover, the Schatten-von Neumann classes are nested and we have
\[
\mathit{S}_p \subset \mathit{S}_q\,,\quad \text{if}\quad 1\leq p\leq q<\infty\,.
\]

\indent The analogy between the Schatten-von Neumann classes of operators and  functions in the Lebesgue spaces $L^r(\mathbb{R}^n)$ in the context of Weyl-H\"{o}rmander calculus, is given in the following proposition by Toft; see Remark 6.4 \cite{Tof06}.

\begin{proposition}\label{prop.toft}
We have the following:
	\begin{enumerate}[label=(\Alph*)]
		\item\label{itm:toft.shat} If $p \in [1,\infty]$, then $S(m,g) \subset s_{t,p}(\mathbb{R}^{2n})$ if and only if $m \in L^p(\mathbb{R}^{n})$.
		\item If $p \in [1,\infty]$, $a \in L^p(\mathbb{R}^{2n})\cap S(m,g)$ and $h_{g}^{N/2}m \in L^{p}(\mathbb{R}^{2n})$ for some $N \geq 0$, then $a \in s_{t,p}(\mathbb{R}^{2n})$,
	\end{enumerate}
 where we have denoted by $s_{\tau,p}(\mathbb{R}^{2n})$ the set of all symbols in $\mathcal{S}^{'}(\mathbb{R}^n)$ such that $a^{\tau}(x,D)$, $\tau \in \mathbb{R}$, belongs to $S_p(L^2(\Ran))$.
\end{proposition}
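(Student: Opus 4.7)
My plan is to use the Bony-Chemin confinement calculus adapted to the H\"ormander metric $g$. Since $g$ is temperate, there is a partition of unity $\{\chi_k\}$ subordinated to a covering of $\mathbb{R}^{2n}$ by g-balls of fixed g-radius around a lattice $\{X_k\}$, and the Riemann sum $\sum_k F(X_k)|\det g_{X_k}|^{-1/2}$ is equivalent to $\int_{\mathbb{R}^{2n}} F(X)\,dX$ for g-slow functions $F$. I would decompose a generic $a\in S(m,g)$ as $a=\sum_k a_k$ with $a_k:=a\chi_k$, estimate the Schatten norm $\|a_k^\tau(x,D)\|_{S_p}$ of each confined piece by $Cm(X_k)\,|\det g_{X_k}|^{-1/(2p)}$, and finally sum via almost orthogonality.

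\textbf{Sufficiency in (A).} At the endpoint $p=2$ the per-piece estimate reduces to the Hilbert-Schmidt identity $\|a_k^\tau(x,D)\|_{S_2}=(2\pi)^{-n/2}\|a_k\|_{L^2}\lesssim m(X_k)\,|\det g_{X_k}|^{-1/4}$, using the g-volume of the support of $\chi_k$. At $p=\infty$, the Calder\'on-Vaillancourt bound for $S(1,g)$ (applied after rescaling each g-ball to a unit-size model metric $g^{\#}$) gives the uniform operator bound $\lesssim m(X_k)$. The trace-class endpoint $p=1$ follows by duality together with the kernel factorization of confined symbols, and intermediate values of $p$ are recovered by Schatten complex interpolation. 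Almost orthogonality of the pieces in the confinement calculus then yields
\[
\|a^\tau(x,D)\|_{S_p}^p\lesssim \sum_k m(X_k)^p\,|\det g_{X_k}|^{-1/2}\asymp \|m\|_{L^p(\mathbb{R}^{2n})}^p,
\]
which is the claimed embedding.

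\textbf{Necessity, part (B), and the main obstacle.} For the converse in (A), I would construct explicit saturating symbols: take $\phi$ confined in a unit g-ball, form $a=\sum_k \varepsilon_k m(X_k)\phi_k$ with $\phi_k$ the g-translated copies of $\phi$ and $\varepsilon_k\in\{\pm 1\}$ random signs, and test $\|a^\tau(x,D)\|_{S_p}$ from below against g-coherent states centered at the $X_k$; the expected lower bound $\mathbb{E}\|a^\tau(x,D)\|_{S_p}\gtrsim \|m\|_{L^p}$ forces $m\in L^p$ whenever the embedding holds. Part (B) is a refinement where the full $S(m,g)$ membership can be weakened because $a\in L^p\cap S(m,g)$ already controls the mass of each confined piece; the price is absorbing the Planck loss $h_g$, so the sharpened per-piece bound reads $\lesssim h_g(X_k)^{N/2}m(X_k)$ for a structural $N\geq 0$, leading after summation to the hypothesis $h_g^{N/2}m\in L^p$. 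The hard part will be uniformly controlling the almost-orthogonality constants as $p$ varies, particularly at the trace-class endpoint $p=1$: the $p=2$ and $p=\infty$ cases are essentially Plancherel and Cotlar-Stein, but the interpolation covering all $p\in[1,\infty]$ without cost in the metric constants is where the full force of Bony-Chemin's symplectic biconfinement lemma is genuinely needed.
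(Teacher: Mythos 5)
The paper does not prove Proposition~\ref{prop.toft} at all: it is quoted verbatim from Toft, Remark~6.4 of~\cite{Tof06}, and used as a black box. So there is no ``paper proof'' to compare against; what you have written is a from-scratch sketch, and it should be assessed on its own terms.

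The overall route you propose (Bony--Chemin confinement of $a$ along a $g$-adapted partition of unity, per-piece Schatten bounds at the $p=2$ and $p=\infty$ endpoints, almost-orthogonality to sum, interpolation for intermediate $p$) is indeed the standard mechanism behind Toft's result and is the right skeleton. However, the sketch leaves the two places where the result is genuinely nontrivial unresolved, and you flag both of them yourself. First, the necessity direction in~(A) is only described as ``I would construct explicit saturating symbols\dots test against $g$-coherent states,'' with no argument that the random-sign lower bound $\mathbb{E}\|a^\tau(x,D)\|_{S_p}\gtrsim\|m\|_{L^p}$ actually holds uniformly in the metric constants; this is precisely where the proof has real content. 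Second, the trace-class endpoint $p=1$: you correctly note that $p=2$ is Plancherel and $p=\infty$ is Cotlar--Stein plus Calder\'on--Vaillancourt, but then write ``the hard part will be uniformly controlling the almost-orthogonality constants\dots at $p=1$'' without doing it. Summing $\|a_k^\tau\|_{S_1}\lesssim m(X_k)\,|\det g_{X_k}|^{-1/2}$ naively gives the right Riemann sum, but the operators $a_k^\tau$ are not orthogonal and $\|\cdot\|_{S_1}$ does not satisfy the quasi-triangle inequality with a summable loss, so the passage from the local estimate to the global one needs the biconfinement machinery (or Toft's direct kernel argument), which the sketch invokes by name but does not supply. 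Finally, for part~(B) the description ``the price is absorbing the Planck loss $h_g$'' gestures at the right idea (derivatives of a confined piece gain factors of $h_g$), but the claimed per-piece bound $\lesssim h_g(X_k)^{N/2}m(X_k)$ is not derived, and it is not explained why the $L^p$ hypothesis on $a$ itself (rather than on $m$) closes the argument. In short: correct scaffolding, acknowledged gaps at exactly the points where a proof would have to work; as submitted it is a plausible plan, not a proof, and since the paper relies on Toft's published theorem the honest thing to do here is simply to cite~\cite{Tof06}.
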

The theorem below is an application of Proposition \ref{prop.toft} to the H\"{o}rmander classes associated with the Hamiltonians of the form \eqref{hami1} that we consider here, for the special case of sum of squares.

\begin{theorem}\label{sch,clas.22}  Let $\mathbb{X}:=\{X_1, X_2,\dots, X_k\}$ be a family of real vector fields on $\Ran$ satisfying the H\"ormander condition of order $2$. Let $a_2(x,D)$ be the H\"ormander sub-Laplacian
\begin{equation}\label{Hormandersum1:sub:Schatten}
    a_2(x,D)=-\sum\limits_{j=1}^kX_j^2,
\end{equation}  and let us consider the metric $g$ and the $g$-weight $m$ defined  as in \eqref{gen.metric} and  \eqref{gen.weight}, respectively.
	Then we have
	\[
	m^{-\mu} \in L^r(\mathbb{R}^{2n})\,,
	\]
	for some $0< r < \infty$, provided that
 \begin{equation}\label{Eq:SchaSmg}
      \mu> \frac{Q}{r}=\frac{2n-r_0}{r}.
 \end{equation}
  \\
	Consequently, for such choices of $\mu$ and for $a\in S(m^{-\mu},g)$ we have
	\[
	a^{\tau}(x,D) \in \mathit{S}_r(L^2(\mathbb{R}^n))\,,\quad \text{for all}\quad \tau \in \mathbb{R}\,.
	\]
\end{theorem}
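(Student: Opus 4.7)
The plan is to reduce the Schatten membership claim to an integrability property of the weight $m$, and then verify that property via a dyadic decomposition of phase space guided by the subelliptic volume estimate proved earlier.

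For the reduction, I will invoke part \ref{itm:toft.shat} of Proposition \ref{prop.toft} applied to the $g$-weight $M = m^{-\mu}$. By that criterion, it suffices to show $m^{-\mu} \in L^r(\mathbb{R}^{2n})$, i.e.\
\[
\int_{\mathbb{R}^{2n}} m(x,\xi)^{-\mu r}\, dx\, d\xi < \infty.
\]
Once this is established, every $a \in S(m^{-\mu},g)$ yields $a^\tau(x,D) \in \mathit{S}_r(L^2(\mathbb{R}^n))$ for every $\tau \in \mathbb{R}$, which is the conclusion of the theorem.

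To verify integrability, I decompose phase space through the dyadic level sets of $m$: set $A_j := \{(x,\xi) : 2^j \leq m(x,\xi) < 2^{j+1}\}$ for $j \geq 0$. These sets cover $\mathbb{R}^{2n}$ up to a null set, since $m \geq \langle X \rangle \geq 1$ everywhere. On each $A_j$, the pointwise inequality $|x|^2 \leq m < 2^{j+1}$ confines the $x$-projection to a ball of Lebesgue measure $\lesssim 2^{jn/2}$. For fixed $x$, Remark \ref{remark:general:s} applied with $s = \mu r$ (which exceeds $Q/2$ thanks to $\mu r > Q$) gives
\[
\int_{2^j \leq m(x,\cdot) < 2^{j+1}} m(x,\xi)^{-\mu r}\, d\xi \lesssim 2^{j(Q/2 - \mu r)}
\]
uniformly in $x$. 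Multiplying by the $x$-volume and summing over $j$ produces a geometric series with ratio $2^{(n+Q)/2 - \mu r}$, convergent when $\mu r > (n+Q)/2$. The bound $r_0 \leq n$ on the rank forces $Q = 2n - r_0 \geq n$, hence $(n+Q)/2 \leq Q$, so the hypothesis $\mu r > Q$ indeed secures convergence and $m^{-\mu} \in L^r(\mathbb{R}^{2n})$ follows.

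The one delicate point is the uniformity in $x$ of the bound from Remark \ref{remark:general:s}. This is ensured by the computation in the proof of Lemma \ref{Subelliptic:lemma}: after the subelliptic diagonalization $\overline{\xi} = \theta(x)\xi$ and the anisotropic rescaling $\overline{\xi} = \delta_{\sqrt{R}}\overline{\xi}'$, the remaining $\overline{\xi}'$-integral reduces to a universal integral controlled only by $Q$, $n$ and $s$, whose finiteness is exactly the condition $s > Q/2$. Once this uniformity is available, the remainder of the argument is a routine application of Fubini's theorem combined with a geometric summation.
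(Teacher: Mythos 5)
Your proof is correct and runs on the same spine as the paper's argument: reduce via Toft's criterion (condition \ref{itm:toft.shat} of Proposition \ref{prop.toft}) to showing $m^{-\mu}\in L^{r}(\mathbb{R}^{2n})$, then decompose phase space into dyadic level sets of $m$ and control each $\xi$-slice by the subelliptic volume estimate of Remark \ref{remark:general:s}. Where you diverge is in the handling of the $x$-integral. The paper writes $m^{-\mu r}=m^{-\mu r/2}\cdot m^{-\mu r/2}$, uses one factor together with $m\ge\langle x\rangle^{2}$ to extract $x$-decay $\langle x\rangle^{-\mu r}$ (integrable over all of $\mathbb{R}^{n}$ since $\mu r>Q\ge n$), and feeds the remaining factor into Remark \ref{remark:general:s} with $s=\mu r/2$. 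You instead keep the full power $m^{-\mu r}$ in the $\xi$-integral (taking $s=\mu r$) and bound the $x$-support of each annulus crudely by the volume $\lesssim 2^{jn/2}$ of the ball $\{|x|\lesssim 2^{j/2}\}$. Your bookkeeping is cleaner — it sidesteps the paper's slightly loose passage from $(|x|^{2})^{-\mu r/2}$ to a rescaled $\langle x\rangle^{-\mu r}$-integral — and it in fact yields the weaker sufficient condition $\mu r>\tfrac{n+Q}{2}$. Since $r_{0}\le n$ forces $Q=2n-r_{0}\ge n$ and hence $\tfrac{n+Q}{2}\le Q$, this is implied by the stated hypothesis $\mu r>Q$ and strictly improves upon it whenever $r_{0}<n$; the two thresholds coincide exactly in the elliptic case $r_{0}=n$, which is the only case the paper's sharpness remark actually checks.
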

\begin{proof}
	In order to estimate the norm $\Vert m^{-\mu}\Vert_{L^r(\mathbb{R}^{2n})},$ let us take a strategy from harmonic analysis and let us make a suitable triadic decomposition of the phase space. To do this we decompose $\mathbb{R}^{2n}$ in triadic sectors
 \begin{equation}
   A_{R}:=  \{R\leq   a_2(x,\xi) +|x|^2+\jpxe\leq 3R\}
 \end{equation}where $R=3^k$ and $k\in \mathbb{Z}.$ Also, note that as $m(x,\xi)\geq 1,$ only the annulus $A_{3^{k}}$ with $k\in \mathbb{N}$ will contribute to the estimate of the norm $\Vert m^{-\mu}\Vert_{L^r(\mathbb{R}^{2n})}.$ Indeed,   by using Remark \ref{remark:general:s} with $s=\mu r/2>Q/2$ (from which we have that $\mu r>Q\geq n$) we can estimate the norm $\Vert m^{-\mu}\Vert_{L^r(\mathbb{R}^{2n})}$ as follows
\begin{align*}
   & \int_{\mathbb{R}^n}\int_{\mathbb{R}^n}m(x,\xi)^{-\mu r}\,dxd\xi\\
   &\lesssim \sum_{k=0}^\infty\int\limits_{ \{3^k\leq   a_2(x,\xi) +|x|^2+\jpxe\leq 3^{k+1}\} }m(x,\xi)^{-\mu r/2}m(x,\xi)^{-\mu r/2}dx\,d\xi\\
    &\lesssim \sum_{k=0}^\infty\int\limits_{ \{x\in \mathbb{R}^n: \langle x\rangle\leq 3^{k+1}\} } \int\limits_{\{ \xi\in \mathbb{R}^n: 3^k\leq   a_2(x,\xi) +|x|^2+\jpxe\leq 3^{k+1}\}} m(x,\xi)^{-\mu r/2}m(x,\xi)^{-\mu r/2}\,d\xi\,dx\\
     &\lesssim \sum_{k=0}^\infty\int\limits_{ \{ \langle x\rangle\leq 3^{k+1}\} } \left( |x|^2\right)^{-\mu r/2}dx\sup_{y\in \mathbb{R}^n}\int\limits_{\{3^k\leq   a_2(y,\xi) +|y|^2+\langle (y,\xi)\rangle\leq 3^{k+1}\}}m(y,\xi)^{-\mu r/2}d\xi\\
     &\lesssim \sum_{k=0}^\infty3^{-k\left(\mu r-n\right)}\int\limits_{ \{ \langle x\rangle\leq 1\} } \langle x\rangle^{-\mu r}dx \sqrt{3}^{k(Q-\mu r)}\int\limits_{ \{\xi':\sum_{j=1}^{r(y)}{\xi'}_j^2+|\xi'|\geq 1 \}  }\left(\sum_{j=1}^{r_0}\overline{\xi'}_j^2+||\overline{\xi'}||\right)^{-\mu r}d\overline{\xi'}\\
      &\lesssim \sum_{k=0}^\infty3^{-k\left(\mu r-n\right)}\sqrt{3}^{k(Q-\mu r)}= \sum_{k=0}^\infty3^{-k\left(n-\mu r\right)}3^{k({Q}/{2}-\mu r/2)}.
\end{align*}
Since
$\sum_{k=0}^\infty3^{-k\left(n-\mu r/2\right)}3^{k({Q}/{2}-\mu r/2)}\leq \sum_{k=0}^\infty3^{k/2({Q}-\mu r)}<\infty$ we have proved that the condition $\mu>Q/r $ implies that $m^{-\mu} \in L^r(\mathbb{R}^{2n})$ and then an application of condition \ref{itm:toft.shat} in Proposition \ref{prop.toft} finishes the proof of the theorem.
\end{proof}
\begin{remark}
    We note that the condition in \eqref{Eq:SchaSmg} is sharp. Indeed, by testing in the elliptic case $r_0=n$ taking the Laplacian and $r=2$, the condition \eqref{Eq:SchaSmg} coincides with the well-known sharp condition $\mu>\frac{n}{2}$ for the Hilbert-Schmidt class.
    
\end{remark}
We now derive some consequences on the rate of growth of eigenvalues for the operator $m(x,D)=a_2(x,D)+|x|^2+J_{\jpX}$, where $a_2(x,D)$ is as in Theorem \ref{sch,clas.22} and $J_{\jpX}=(-\Delta+|x|^2+1)^{\half}$. To do so, we will apply Theorem \ref{sch,clas.22} to the negative powers of $m(x,D)$,  then we get the rate of decay of eigenvalues and consequently the rate of growth.

\begin{theorem}\label{schvneig} Let $a_2(x,D)$ be a H\"ormander sub-Laplacian 
 as in Theorem \ref{sch,clas.22}. Then $m(x,D)=a_2(x,D)+|x|^2+J_{\jpX}$ satisfies
 \[m(x,D)^{-\mu}\in  \mathit{S}_r(L^2(\mathbb{R}^n))\,, \]
 provided $\mu>\frac{2n-r_0}{r}.$  \\
 
 Moreover, the sequence of eigenvalues of  $\lambda_j(a_2(x,D)+|x|^2+J_{\jpX})$ of $a_2(x,D)+|x|^2+J_{\jpX}$ has a growth of order at least
\begin{equation} j^{\frac{1}{r}}, \mbox{ as } j\rightarrow\infty,\label{fr87k}\end{equation}
provided $r>2n-r_0$. Consequently, the sequence of eigenvalues of  $\lambda_j(a_2(x,D)+|x|^2)$  has a growth at least as \eqref{fr87k} for $r>2n-r_0$.
\end{theorem}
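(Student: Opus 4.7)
The plan is to prove the three assertions sequentially: first the Schatten membership of $m(x,D)^{-\mu}$, then the lower bound on $\lambda_j(m(x,D))$, and finally the transfer of that bound to $\lambda_j(a_2(x,D)+|x|^2)$.

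For the first statement, I would use the Weyl-H\"ormander functional calculus. By Lemma \ref{class} combined with the inclusion \eqref{shX12}, the symbol $m=a+\langle X\rangle$ belongs to $S(m,g)$, and by the corollary following Theorem \ref{thmg1} the weight $m$ is a $g$-weight; in addition $m(x,\xi)\geq 1$ everywhere. Hence $m$ is $g$-elliptic in $S(m,g)$, and standard symbolic inversion within the Weyl-H\"ormander calculus (together with complex interpolation to handle non-integer powers) yields $m(x,D)^{-\mu}\in\mathrm{Op}(S(m^{-\mu},g))$. Theorem \ref{sch,clas.22} applied to the symbol of $m(x,D)^{-\mu}$ then gives $m(x,D)^{-\mu}\in S_r(L^2(\mathbb{R}^n))$ as soon as $\mu>(2n-r_0)/r$.

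For the second statement, I would specialize to $\mu=1$, which is admissible precisely when $r>2n-r_0$. The operator $\mathcal{A}:=m(x,D)$ is positive self-adjoint with compact resolvent, so it admits a non-decreasing sequence of eigenvalues $0<\lambda_1\leq\lambda_2\leq\cdots\to\infty$, and the singular values of $\mathcal{A}^{-1}$ are exactly the reciprocals $\lambda_j^{-1}$. The membership $\mathcal{A}^{-1}\in S_r$ therefore reads $\sum_{j\geq 1}\lambda_j^{-r}<\infty$. Because $\lambda_j^{-r}$ is nonincreasing in $j$, the elementary comparison
\[
\tfrac{j}{2}\,\lambda_j^{-r}\;\leq\;\sum_{\lceil j/2\rceil\leq k\leq j}\lambda_k^{-r}\;\longrightarrow\;0
\]
forces $\lambda_j^{-r}=o(1/j)$, equivalently $\lambda_j(\mathcal{A})\gtrsim j^{1/r}$ as $j\to\infty$.

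The third claim requires transferring this bound from $\mathcal{A}$ to $a_2(x,D)+|x|^2$. Since the system $\{X_1,\dots,X_k\}$ satisfies the H\"ormander condition of order $2$, the operator $a_2(x,D)$ is subelliptic, and the isomorphism corollary just preceding Section \ref{boundednnesssection} gives, for sufficiently large $C>0$, that $\mathcal{H}_C:=a_2(x,D)+|x|^2+C$ is invertible with $\mathcal{H}_C^{-1}\in\mathrm{Op}(S(m^{-1},g))$. I would then run the two previous steps with $\mathcal{A}$ replaced by $\mathcal{H}_C$: Theorem \ref{sch,clas.22} yields $\mathcal{H}_C^{-1}\in S_r$ for $r>2n-r_0$, the summability argument gives $\lambda_j(\mathcal{H}_C)\gtrsim j^{1/r}$, and the trivial shift $\lambda_j(a_2(x,D)+|x|^2)=\lambda_j(\mathcal{H}_C)-C$ preserves the asymptotic rate. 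The main obstacle lies in the symbolic inversion: verifying that $\mathcal{H}_C^{-1}$ and its fractional powers lie in $\mathrm{Op}(S(m^{-\mu},g))$ uses crucially both the subelliptic estimate and the geodesic temperateness of $g$, the latter being available since the metric \eqref{gen.metric} is symmetrically split.
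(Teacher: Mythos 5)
Your proof follows the paper's strategy for the first two parts — Schatten membership of $m(x,D)^{-\mu}$ via Theorem \ref{sch,clas.22} and the eigenvalue growth deduced from $\sum_j\lambda_j^{-r}<\infty$ — with one small variation: your direct tail-sum argument replaces the paper's appeal to the Weyl inequality between singular values and eigenvalues. Since $m(x,D)$ is positive self-adjoint, the singular values of $m(x,D)^{-1}$ coincide with the reciprocal eigenvalues, so both arguments are valid and yours is slightly more transparent. For the third part, however, you take a genuinely different route. The paper transfers the bound from $m(x,D)$ to $a_2(x,D)+|x|^2$ by a brief comparison with the eigenvalue growth of $J_{\jpX}$; this step is delicate, since removing a summand whose eigenvalues may grow at least as fast as the target rate does not automatically preserve a lower bound, and the paper leaves the required min-max comparison implicit. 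You avoid this by running the entire argument directly on $\mathcal{H}_C=a_2(x,D)+|x|^2+C$: the subelliptic isomorphism corollary (which applies because the H\"ormander condition of order $2$ is assumed) gives $\mathcal{H}_C^{-1}\in\textnormal{Op}(S(m^{-1},g))$, Theorem \ref{sch,clas.22} then places $\mathcal{H}_C^{-1}\in S_r(L^2(\mathbb{R}^n))$ for $r>2n-r_0$, the same summability argument yields $\lambda_j(\mathcal{H}_C)\geq c\,j^{1/r}$ eventually, and the trivial shift by $C$ gives the claim for $a_2(x,D)+|x|^2$. This route is cleaner, bypasses the perturbation-theoretic gap in the paper's wording, and also makes explicit where subellipticity (through the isomorphism corollary and the geodesic temperateness of $g$) actually enters — a point the paper handles rather tersely, both here and in the symbolic-inversion step you supply for $m(x,D)^{-\mu}$ in part one.
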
%

\begin{proof} If $\mu>\frac{2n-r_0}{r},$ the membership of $m(x,D)^{-\mu}$ into the class $\mathit{S}_r(L^2(\mathbb{R}^n))$ is an immediate  consequence of  Theorem \ref{sch,clas.22}. On the other hand, we note that from the case $\mu=1$, we can deduce the rate of decay of the eigenvalues of  $m(x,D)^{-1}$, obtaining for $r>2n-r_0$\\
\begin{equation} \label{wecpd}   
 \lambda_j(m(x,D)^{-1})=\os(j^{-\frac{1}{r}}),\, \mbox{ as }j\rightarrow\infty\,.
 \end{equation}
From this  we will  obtain the following estimate for the rate of growth of the eigenvalues of $m(x,D)=a_2(x,D)+|x|^2+J_{\jpX}:\\$

For every $L\in\ene$ there exists $L_0\in\ene$ such that
\begin{equation}
Lj^{\frac{1}{r}}\leq\lambda_j(m(x,D)),\,\, \mbox{ for }j\geq L_0.  
\end{equation}

Thus, the eigenvalues $\lambda_j(a_2(x,D)+|x|^2+J_{\jpX})$ have a growth of  order at least
\begin{equation}
 \label{EQ:growthb}
j^{\frac{1}{r}}, \mbox{ as } j\rightarrow\infty,
\end{equation}
provided $r>2n-r_0$.\\

For the last part, we observe that since $2n-r_0=n+n-r_0\geq n$ and the eigenvalues of $J_{\jpX}=(-\Delta+|x|^2+1)^{\half}$ are of order $j^{\frac{1}{r}}$ for $r>n$. It follows that the eigenvalues of  $\lambda_j(a_2(x,D)+|x|^2)$  have a growth at least as \eqref{fr87k} for $r>2n-r_0$.
\end{proof}
In order to obtain  \eqref{wecpd}  one can  apply the Weyl inequality below  which relates the singular values $s_n(T)$ and the eigenvalues $\lambda_n(T)$  for a compact operator $T$ on a complex separable Hilbert space:

\[\sum\limits_{n=1}^{\infty}|\lambda_n(T)|^p\leq \sum\limits_{n=1}^{\infty} s_n(T)^p ,\quad p>0.\]


\bibliographystyle{amsplain}

\begin{thebibliography}{99}

\bibitem{albev:hs} Albeverio, S. Mazzucchi, S. The trace formula for the heat semigroup with polynomial potential. Seminar on Stochastic Analysis, Random Fields and Applications VI, 3---21, Progr. Probab., 63, Birkhäuser/Springer Basel AG, Basel, 2011.

\bibitem{Be} Beals, R. {  $L^p$ and H\"older estimates for pseudo-differential operators: sufficient conditions.}  Annales de l'Institut Fourier. 29(3), (1979),  239--260. 

\bibitem{Be2}  Beals R. Lp and H\"older estimates for pseudodifferential operators: necessary conditions. In:
Harmonic analysis in Euclidean spaces (Proceedings of Symposia in Pure Mathematics, Williams
College, Williamstown, MA, 1978); Providence, R.I: American Mathematical Society; (1979).
p. 153--157.

\bibitem{Beals1999} Beals, R., Greiner, P., Gaveau, B. Green’s functions for some highly degenerate elliptic operators. J. Funct. Anal., 165(2), (1999), 407--429. 

\bibitem{BeShub} Berezin, F. A.,  Shubin, M. A, The Schr\"odinger Equation, Kluwer, Dordrecht, 1991. 

\bibitem{Schrohe1x}  Bilyj, O. Schrohe, E.  Seiler, J. $H_{\infty}$-calculus for hypoelliptic pseudodifferential
operators, Proc. Amer. Math. Soc. 138(5) (2010), 1645--1656.

\bibitem{BongioanniTorrea}   Bongioanni, B. and   Torrea, J.L. Sobolev spaces associated to the harmonic oscillator, Proc. Indian Acad. Sci. Math. Sci. 116 (2006), 1--24.

\bibitem{BC94}  Bony, J.M, Chemin, J.Y. Espaces fonctionnels associ\'{e}s au calcul de Weyl-H\"{o}rmander. Bulletin de la Soci\'{e}t\'{e} Math\'{e}matique de France (1994), 122(1), 77--118.
 
 \bibitem{Bonygtemp}  Bony, J.M. On the characterization of pseudodifferential operators (old and new). Studies in phase space analysis with applications to PDEs, 21–34, Progr. Nonlinear Differential Equations Appl., 84, Birkh\"auser/Springer, New York, 2013. 
 
 
 \bibitem{BuzanoNicola}  Buzano, E.  Nicola, F. Pseudodifferential  operators  and  Schatten-von  Neumann classes. In: P. Boggiatto, R. Ashino, M. W. Wong (Eds.), Advances in Pseudodifferential Operators (Proceedings ISAAC, Toronto 2003), Operator Theory Adv. Appl.,vol. 155, Birkha\"user, Basel, (2004), 117--130.

\bibitem{bufa:hy}
Buzano, E. Nicola, F. Complex powers of hypoelliptic pseudodifferential
  operators, J. Funct. Anal. 245, (2007), 353--378.

\bibitem{c-c-x:se}  Cancelier, C. E.  Chemin, J.-Y.   Xu, C.-J. Calcul de Weyl-H{\"o}rmander et op{\'e}rateurs sous-elliptiques. Annales de l'Institut Fourier, t. 43, 1993, p. 1157-1178.

  
\bibitem{CardonaHolder} Cardona, D. Pseudo-differential operators in H\"older spaces revisited: Weyl-H\"ormander calculus and Ruzhansky-Turunen classes. Mediterr. J. Math. 16 (2019), 6(17),  No. 148, 17 pp.



\bibitem{Toft2023} Cardona, D. Chatzakou, M. Ruzhansky, M. Toft, J. Schatten-von Neumann properties for H\"ormander classes on compact Lie groups,  arXiv:2301.04044

\bibitem{CRD19} Cardona, D., Delgado, J., Ruzhansky, M. $L^p$--bounds for pseudo-differential operators on graded Lie groups. arXiv:1911.03397

\bibitem{CR} Cardona, D.,  Ruzhansky, M. H\"ormander condition for pseudo-multipliers associated to the harmonic oscillator.  arXiv:1810.01260

\bibitem{Carmona1} Carmona, R.  Regularity properties of Schrodinger and Dirichlet semigroups, J. Funct. Anal.  33 (1979), 259--296.

\bibitem{anh:cdr}
Chatzakou, M. Delgado, J.  Ruzhansky, M. {O}n a class of anharmonic oscillators,  J. Math. Pures Appl. 153, (9) (2021), 1--29.

\bibitem{anh:cdr2} Chatzakou, M. Delgado, J.    Ruzhansky, M. On a class of anharmonic oscillators II. General case,  Bull. Sci. Math., 180 (2022), 103196.




\bibitem{Delgado2006}   Delgado, J. Estimations Lp pour une classe d'op\'erateurs pseudo-diff\'erentiels dans le cadre du calcul de Weyl-H\"ormander, J. Anal. Math. 100 (2006), 337--374.


\bibitem{Delgado} Delgado, J. A trace formula for nuclear operators on $L^p$. In: Pseudo-differential operators: complex analysis and partial differential equations, volume 205 of Oper. Theory Adv. Appl.,pages 181--193. Birkha\"user Verlag, Basel, 2010.


\bibitem{Profe} Delgado, J. {$L^p$ bounds for pseudo-differential operators on the torus } Operators Theory, advances and applications.  {231}, (2012), 103-116. 


\bibitem{Delgado2013}  Delgado, J. Trace formulas for nuclear operators in spaces of Bochner integrable functions. Monatsh. Math. 172 (2013), no. 3-4, 259--275.

\bibitem{Delgado2015} Delgado, J. On the r-nuclearity of some integral operators on Lebesgue spaces. Tohoku Math. J. (2) 67 (2015), no. 1, 125--135.


 \bibitem{subedel1} Delgado, J. A class of invertible subelliptic operators in S(m,g)-classes. Results Math. 67, (2015), no. 3-4, 431--444.

\bibitem{Profe2}  Delgado, J.   $L^p$
 bounds in $S(m, g)$-calculus, Complex Variables and
Elliptic Equations, 61(3),  (2016), 315--337.



\bibitem{Delgado2022} Delgado, J. A Poincaré determinant on the torus. J. Pseudo-Differ. Oper. Appl. 13 (2022), no. 3, Paper No. 29, 13 pp. 



\bibitem{DelgadoRuzhansky2014} Delgado, J. Ruzhansky, M. Schatten classes on compact manifolds: kernel conditions. J. Funct. Anal. 267 (2014), no. 3, 772--798.


\bibitem{DelRuzTrace111}  Delgado, J., Ruzhansky, M.   Kernel and symbol criteria for Schatten classes and r-nuclearity on compact manifolds. C. R. Math. Acad. Sci. Paris 352, (2014), no. 10, pp. 779--784.

\bibitem{DelRuzTrace1111}   Delgado, J., Ruzhansky, M. $L^p$-nuclearity, traces, and Grothendieck-Lidskii formula on compact Lie groups. J. Math. Pures Appl. (9) 102, (2014), no. 1, pp. 153--172.

\bibitem{DelRuzTrace11}  Delgado, J., Ruzhansky, M.  Schatten classes and traces on compact groups. Math. Res. Lett. 24, (2017), no. 4, pp. 979--1003.




\bibitem{DelgadoRuzhansky2018MS} Delgado, J. Ruzhansky, M. The bounded approximation property of variable Lebesgue spaces and nuclearity. Math. Scand. 122 (2018), no. 2, 299--319.

\bibitem{DelRuzTrace1} Delgado, J., Ruzhansky, M.  Fourier multipliers, symbols, and nuclearity on compact manifolds. J. Anal. Math. 135(2), (2018),  pp. 757--800.

\bibitem{DR19} Delgado J., Ruzhansky M., $L^p$-bounds for pseudo-differential operators on compact Lie groups.  J. Inst. Math. Jussieu, 18, (2019), 531--559




\bibitem{dr:intsc}
Delgado, J. Ruzhansky, M. {S}chatten-von {N}eumann classes of integral operators, J. Math. Pures Appl. 154, (9),  (2021), 1--29.








\bibitem{DelgadoRuzhanskyTokmagambetov2017}  Delgado, J. Ruzhansky, M. Tokmagambetov, N. Schatten classes, nuclearity and nonharmonic analysis on compact manifolds with boundary. J. Math. Pures Appl. (9) 107, (2017), no. 6, 758--783.

\bibitem{DelgadoRuzhanskyWang}  Delgado, J. Ruzhansky, M. Wang, B. Approximation property and nuclearity on mixed-norm Lp, modulation and Wiener amalgam spaces. J. Lond. Math. Soc. (2) 94, (2016), no. 2, 391--408.

\bibitem{DelgadoRuzhanskyWang2}  Delgado, J. Ruzhansky, M. Wang, B. Grothendieck-Lidskii trace formula for mixed-norm and variable Lebesgue spaces. J. Spectr. Theory, 6(4), (2016),  781--791.

\bibitem{DelgadoZamudio2010} Delgado, J., Zamudio, A. M. Invertibility for a class of degenerate elliptic operators. 
J. Pseudo-Differ. Oper. Appl. 1(2), (2010), 207--231.

\bibitem{Doi94} Doi, S. On the Cauchy problem for Schr\"odinger type equations and the regularity of the
solutions, J. Math. Kyoto Univ. 34, (1994), 319--328.

\bibitem{Doi96}  Doi, S. Remarks on the Cauchy problem for Schr\"odinger type equations, Comm. P.D.E. 21
(1996), 163--178.



\bibitem{fe} Fefferman, C. $L^p-$bounds for pseudo-differential operators. Israel J. Math. {14}, (1973), 413--417.

\bibitem{FeffermanPhong} Fefferman, C.   Phong, D.H. The uncertainty principle and sharp gårding inequalities. Comm. Pure and Appl. Math. 34, (1981), 285-331.
\bibitem{fe:unc} Fefferman, C. The uncertainty principle. Bull. Amer. Math. Soc., 9 (1983), 129--206.




\bibitem{GK69}
Gohberg, I. C.  Kre\u{i}n, D.H. Introduction to the theory of linear nonselfadoint operators. Vol. 18 of Translations of Mathematical Monographs.  American Mathematical Society, Providence, R.I., 1969. Translated from the Russian by A.~ Feinstein.



\bibitem{Hara92}  Hara, S. A necessary condition for $H^{\infty}$ -wellposed Cauchy problem of Schr\"odinger type
equations with variable coefficients, J. Math. Kyoto Univ. 32--2 (1992), 287-305.

 
\bibitem{HiroshiKajitani} Hiroshi, A.
 Kajitani, K. The Cauchy problem for Schrodinger type equations with variable coefficients,
preprint in Tsukuba Univ.

\bibitem{HisSigal}  Hislop, P. D. and  Sigal, I. M. Introduction to spectral theory. With applications to Schrödinger operators. Applied Mathematical Sciences, 113. Springer-Verlag, New York, 1996.


\bibitem{Hormander15} H\"ormander, L. On  the  asymptotic  distribution  of  the  eigenvalues  of  pseudo-differential operators inRn. Arkiv f\"or Mat. 17 (1979), 297--313.


\bibitem{Hormander1967}  H\"ormander, L. Pseudo-differential operators and hypoelliptic equations. Singular integrals (Proc. Sympos. Pure Math., Vol. X, Chicago, Ill., 1966), (1967), pp. 138--183. Amer. Math. Soc., Providence, R.I. 

\bibitem{HormanderBook34} H\"ormander, L. The Analysis of the linear partial differential operators, Vol. III. Springer-Verlag, (1985).

\bibitem{HormanderBook34} H\"ormander, L. The Analysis of the linear partial differential operators, Vol. III. Springer-Verlag, (1985).




\bibitem{iou:a1s} Imekraz, R.  Ouhabaz, E. M. Bernstein inequalities via the heat semigroup. Math. Ann. 382 (2022), no. 1-2, 783--819


\bibitem{Ichinose84}  Ichinose, W. Sufficient condition on $H^{\infty}$ well posedness for Schr\"odinger type equations,
Comm. P.D.E. 9 (1984), 33-48.

\bibitem{Ichinose84}  Ichinose, W. Sufficient condition on $H^{\infty}$ well posedness for Schr\"odinger type equations,
Comm. P.D.E. 9 (1984), 33--48.

\bibitem{Ichinose87} Ichinose, W. The Cauchy problem for Schrodinger type equations with variable coefficients,
Osaka J. Math. 24 (1987), 853--886.

\bibitem{ishik:a1} Ishige, K.   Kabeya, Y. Lp norms of nonnegative Schrödinger heat semigroup and the large time behavior of hot spots. J. Funct. Anal. 262 (2012), no. 6, 2695--2733.

\bibitem{JerisonSanchezCalle} Jerison, D., S\'anchez-Calle, A. Subelliptic, second order differential operators. In: Berenstein, C.A. (eds) Complex Analysis III. Lecture Notes in Mathematics, vol 1277, (1987),  Springer, Berlin, Heidelberg.


\bibitem{KohnNirenberg1965}  Kohn, J.J., Nirenberg, L.  An algebra of pseudo‐differential operators. Commun. Pure and Appl. Math. 18, 269-305, 1965.

\bibitem{Kumano-goNagase}   Kumano-go, H., Nagase, M.  Pseudo-differential operators with nonregular symbols and applications, Funkcial. Ekvac. 21 (1978), 151--192.

\bibitem{Lax02}
Lax, P.~D. Functional Analysis.	Pure and Applied Mathematics (New York). John Wiley \& Sons Inc., New York, 2002.  A Wiley-Interscience Publication.

\bibitem{le:book}
Lerner, N. Metrics on the Phase Space and Non-Selfadjoint Pseudo-Differential
  Operators, Pseudo-Differential Operators, Birkh\"auser, Basel, 2010.

\bibitem{Levendorskii93} Levendorskii, S. Degenerate Elliptic Equations. Mathematics and Its Applications . Dordrecht : Kluwer Academic Publishers Group, 258. (1993).



\bibitem{LI}
Li, C. Z., Wang,  R. H. On the Lp-boundedness of several classes of pseudo-differential operators, Chinese Ann. Math., 5 B (1984), 193--213.


\bibitem{Marcinkiewicz1939} Marcinkiewicz, J. Sur les multiplicateurs des series de Fourier. Studia Mathematica, 8, (1939), 78--91.
 

 

\bibitem{Mihlin1956}  Mihlin, S. G. On the multipliers of Fourier integrals. Dokl. Akad.
Nauk SSSR (N.S.), 109, (1956), 701--703, Russian.

\bibitem{Mizohata85}  Mizohata, S. On the Cauchy problem, Academic Press (1985).

\bibitem{mo:m1} Morimoto, Y. Estimates for degenerate Schrödinger operators
and hypoellipticity for infinitely degenerate elliptic operators. J . Math. Kyoto Univ. 32-2, (1992), 333-372.

\bibitem{NagelSteinWainger} Nagel, A. Stein, E. Wainger, S. Balls and metrics defined by vector fields I: Basic properties, Acta Math. 155
(1985), 103--147.

 \bibitem{P10}
      Parmeggiani, A. 
     Spectral  Theory  of  Non-Commutative  Harmonic  Oscillators:  An  Introduction,  Lecture  Notes  in  Mathematics,
 Lecture  Notes  in  Mathematics,  Springer,  2010.

\bibitem{NicolaRodino} Nicola, F., Rodino, L., Global pseudo-differential calculus on euclidean spaces. vol.4, Pseudo-differential operators theory and applications. Birkh\"{a}user (2010).

\bibitem{NicolaRodino2} Nicola, F., Rodino, L., Dixmier traceability for general pseudo-differential operators,
in C*-algebras and Elliptic Theory II, pp. 227–237 (Birkh\"auser, Basel, Switzerland,
2008).

\bibitem{RS75}
Reed, M. Simon, B. Methods of Modern Mathematical Physics. II. Fourier Analysis, self-adjointness. Academic Press [Harcourt Brace Jovanovich Publishers], New York, 1975.


\bibitem{ShubinBook}   Shubin, M. A.  Pseudodifferential   Operators  and  Spectral  Theory.  Springer--Verlag,  Berlin  e.a.,  1987.  

 \bibitem{BSimon3} Simon, B. Functional integration and quantum physics, Academic Press, New York, 1981
 \bibitem{BSimon1} Simon, B.  Schrödinger semigroups. Bull. Amer. Math. Soc. (N.S.) 7(3), (1982), 447--526.

 \bibitem{BSimon2} Simon, B.  Erratum: Schrödinger semigroups. Bull. Amer. Math. Soc. (N.S.) 11(2), (1984).


\bibitem{Sim10}
Simon, B. Operator Theory, A comprehensive Course in Analysis, IV.
\newblock American Mathematical Society, Providence, R.I., 2010.




\bibitem{Takeuchi} Takeuchi, J. A necessary condition for the well-posedness of the Cauchy problem for a certain
class of evolution equations, Proc. Japan Acad. 50, (1974), 133-137.



\bibitem{Taylorbook1981} Taylor, M.  Pseudodifferential Operators, Princeton Univ. Press, Princeton, N.J., 1981.




\bibitem{Thangavelu} Thangavelu, S. Lectures on Hermite and Laguerre Expansions, Math. Notes, vol. 42, Princeton University Press, Princeton, 1993.

\bibitem{Tof06}
 Toft, J.  Schatten-von Neumann properties in the Weyl calculus and calculus of metrics on symplectic vector space. Ann. Global Anal. Geom., 30(2), (2006), 169--209.


\bibitem{Weyl} Weyl, H. Inequalities between two kinds of eigenvalues of a linear transformation, Proc. Nat. Acad. Sci. U. S. A., 35 (1949), 408-411.

 \bibitem{CX95}
Xu, C., Zhu, X. On the inverse of degenarate elliptic operators.J. Cont. Math. (1995),  16(3).

\end{thebibliography}

\end{document}